\newtheorem{theorem}{Theorem}[section]
\newtheorem{corollary}{Corollary}[section]
\newtheorem{lemma}{Lemma}[section]
\newcommand{\be}{\begin{equation}}
\newcommand{\ee}{\end{equation}}
\newcommand{\B}{\beta}
\newcommand{\lt}{\left}
\newcommand{\rt}{\right}
\newcommand{\sxj}{\sigma^2_{x_j}}
\newcommand{\sij}{\sigma^2_{ij}}
\newcommand{\sdos}{\sigma^2}
\def\@setauthors{%
	\begingroup
	\def\thanks{\protect\thanks@warning}%
	\trivlist
	\centering\footnotesize \@topsep30\p@\relax
	\advance\@topsep by -\baselineskip
	\item\relax
	\author@andify\authors
	\def\\{\protect\linebreak}%
	%  \MakeUppercase{\authors}%
	\authors%
	\ifx\@empty\contribs
	\else
	,\penalty-3 \space \@setcontribs
	\@closetoccontribs
	\fi
	\endtrivlist
	\endgroup
}
\begin{document}

\title[Data analysis for proficiency testing] {Data analysis for proficiency testing}

	\author[Reiko Aoki]{REIKO AOKI$  ^{(a)}  $}
	
	\author[Dorival Le\~ao]{DORIVAL LE\~AO$^{(b)}$}
	
	\author[Juan P.M. Bustamante]{JUAN P. M. BUSTAMANTE$^{(a)}$}
	
	\author[Filidor Vilca Labra]{FILIDOR VILCA LABRA$^{(c)}$}

\keywords{ultrastructural model, measurement error model, asymptotic theory, hypothesis testing, confidence region}
\maketitle

\begin{center}
$^{(a)} $ Instituto de Ci\^encias Matem\'aticas e de Computa\c c\~ao.

Universidade de S\~ao
Paulo, S\~ao Carlos - SP, Brazil

$^{(b)} $ Estatcamp Consultoria

S\~ao Carlos - SP, Brazil

$^{(c)} $ Instituto de Matem\'atica, Estat\'istica e Computa\c c\~ao Cient\'ifica

Universidade de Campinas, Campinas - SP, Brazil
\end{center}

\begin{abstract}Proficiency Testing (PT) determines the performance of individual laboratories for specific tests or measurements and it is used to monitor the reliability of laboratories measurements. PT plays a highly valuable role as it provides an objective evidence of the competence of the
participant
laboratories. In this paper, we propose a multivariate model to assess equivalence among laboratories
measurements in proficiency testing. Our method allow to include type B source
of variation and to deal with multivariate data, where the item under test is measured
at different levels. Although intuitive, the proposed model is nonergodic, which means that the asymptotic Fisher information matrix is random. As a consequence, a detailed asymptotic analysis was carried out to establish the strategy for comparing the results of the participating laboratories. To illustrate, we apply our method to analyze the data from the Brazilian Engine test group, PT program, where the power of an engine was measured by $8$ laboratories at several levels of rotation.
\end{abstract}
%

%\tableofcontents
\section{Introduction}
Proficiency studies are conducted to evaluate the equivalence of laboratories measurements (see, \cite{ISOIEC170432010}). In these studies, a reference value of some measurand (the quantity to be measured) is determined and the results of all the laboratories are compared to this reference value. According to \cite{ISOIEC170432010} and \cite{EA418Guidance}, acrredited laboratories should assure the quality of test results by participating in proficiency testing programs. Various statistical techniques have been adopted to assess equivalence among laboratories measurements. These include classical statistical techniques such as paired t test, z-score, normalized error, repeated measures analysis of variance, Bland-Altman plot, see \cite{Altman1986}, \cite{linsinger1998influence}, \cite{rosario2008comparison}, \cite{ISOIEC170432010} and \cite{ISO135282015}, and references therein.

One critical point in most techniques is the fact that the type B source of variability (\cite{guide1995expression}) is not considered. In this direction, \cite{LEAOPINTO20091427} extended Jaech's model (\cite{jaech1985statistical}) to encompass the type B source of variation and they evaluated this model under elliptical distributions. \cite{toman2007bayesian} proposed a Bayesian Hierarquical model to encompass the type B source of variation into the model, see \cite{page2010using} for further developments of Bayesian Hierarquical model.

The approach to quantification of uncertainty of measurement is presented in \cite{guide1995expression}.  As discussed by \cite{gleser1998assessing}, the basic idea of \cite{guide1995expression} is to approximate a measurement equation $x = g(Z_1 , \cdots , Z_d)$, where $x$ denotes the measurand, $g$ is a known function and $Z_1, \cdots , Z_d$ denote the $d$ input quantities, by a first order Taylor series about the expected values of $Z_i$. The standard combined uncertainty is defined as the standard deviation of the probability distribution of $x$ based on this linear approximation. The expected value and the variance of each input quantity $Z_i$ may be based on measurements or any other information, such as the resolution of the measuring instrument. \cite{guide1995expression} defines two types of uncertainty evaluations. Type A is determined by the statistical analysis of a series of observations and type B is determined by other means, such as instruments specifications, correction factors or even data from additional experiments.

In spite of the amount of techniques to assess equivalence among laboratories measurements, these approaches consider the reference as a single value. In some proficiency studies the item under testing is measured at different levels. As one illustration, consider the measurement system to evaluate the engine power. In this case, the power (torque times rotation) is measured at different levels of rotation and as a result, we have one reference value for each level.

The first goal of this work is to propose a multivariate model to assess equivalence among laboratories measurements considering that the item under testing is measured at different levels. Let $y_{ijk}$ represent the $kth$ replica of the measurement of
the item under testing (engine power) at the $jth$ level (engine rotation) measured by the
$ith$ laboratory. We denote by $x_j$ the true unobserved value of the item under testing (engine
power) at the $jth$ point (engine rotation) for every $k=1,\cdots,n_i$,
$j=1,\cdots,m$, $i=1,\cdots,p$.

As considered by \cite{guide1995expression}, no measurement can perfectly determine the quantity to be measured $x_j$, known as measurand. In order to capture the inaccuracies and imprecisions arising from the measurements, we assume that $y_{ijk}$ satisfies
the linear ultrastructural relationship with the true (unobserved) value
$x_j$. We  denote by  $Y_{ijk}$ the observed value (subject to
measurement error) of the $kth$ replica of the measurement of the item under testing
(engine power) at the  $jth$ level (engine rotation) obtained by the
$ith$ laboratory. In this case, the proposed model is represented as follows
\begin{equation}\label{mod}
y_{ijk}=\alpha_{i} + \beta_{i} x_{j},
\end{equation}
\[
Y_{ijk} = y_{ijk}
 + e_{ijk},
 \]
where  $\mathbb{E}(e_{ijk})=0,$ $Var(e_{ijk})=\sigma^2_{ij}$, $\mathbb{E}(x_j)=\mu_{x_j},$
$Var(x_j)=\sigma^2_{x_j},$ $x_j$ independent of $e_{ijk}$,  for every $k=1,\cdots,n_i$, $i=1,\cdots,p$ and  $j=1,\cdots,m$.

Comparative calibration models are typically used in comparing different ways of measuring the same unknown quantity in a group of several available items, see \cite{Barnett1969}, \cite{Theobald1978}, \cite{Kimura1992}, \cite{cheng1997statistical} and \cite{Gimenez2014}.
%Theobald and Mallinso (1978), Kimura (1992), Cheng and Van Ness (1997) and Gim\'enez and Patat (2014).
The main difference between the proposed ultrastructural model (\ref{mod}) and the comparative calibration model is related with the available items. In a proficiency testing, the same item (engine) is measured by all participant laboratories instead of several independent items. Hence, there is a natural dependency among all measurements of the same level (rotation) of the item (engine). As a conclusion, the usual comparative calibration model is not appropriate to evaluate the performance of participant laboratories in a PT program.

In the proposed model (\ref{mod}), $x_j$ represents the measurand with mean $\mu_{x_j}$ and variance $\sigma^2_{x_j}$ at the $jth$ level. In general, the parameter $\sigma^2_{x_j}$ is determined by one expert laboratory  during the stability study which is conducted to guarantee the stability of the item under testing.  In our example, the GM power train developed one standard engine and, during the stability study, evaluated its natural variability $(\sigma^2_{x_j})$.

%In general, the item under testing is measured at least two times by the reference laboratory, at the beginning and the end of the proficiency study. This is %important to guarantee the integrability and stability of the item under testing during the proficiency study.Nao estamos considerando a GM como referncia e %sim Maua nao tem as duas medicoes}

One of the basic element in all PT is the evaluation of the performance of each participant. In order to do so, the PT provider has to determine a reference value, which can be obtained in two different manners. One is to employ a reference laboratory and the other is to use a consensus value. We will use the reference laboratory strategy. Without loss of generality we consider that the first laboratory is the reference laboratory. In this case, we set $\alpha_1=0$ and $\beta_1=1$, then we have
\be \label{mod1}
Y_{1jk}=x_j+e_{1jk}, \quad j = 1,\cdots,m ~ ~ \text{and}~ ~  k=1,\cdots,n_1.
\ee
Here $e_{1jk}$ is the measurement error corresponding to the reference laboratory. The variance $\sigma^2_{1j}$ is determined by the combined variance calculated and provided by the reference laboratory following the protocol proposed by \cite{guide1995expression}. In the sequel, the participant laboratories measurements are given by
%\be \label{mod2}
\begin{equation} \label{mod2}
Y_{ijk}=\alpha_{i} + \beta_{i} x_{j}+e_{ijk}, \quad  k=1,\cdots,n_i,  ~  j = 1,\cdots,m ,
\end{equation}
%\ee
where $\alpha_i$ describes the additive bias and $\beta_i$ describes the multiplicative bias of laboratory $i=2, \cdots , p$. In the same way $e_{ijk}$ represents the measurement error associated with $ith$ laboratory. The variance $\sigma^2_{ij}$ is determined by the combined variance calculated and provided by the $ith$ laboratory following the protocol proposed by \cite{guide1995expression}

Considering the proposed ultrastructural measurement error model, the second goal of this work is to develop a test to evaluate the competence of the group of laboratories and also the competence of individual laboratories with respect to the reference laboratory. In this case, we provide statistical testing hypothesis to evaluate additive and multiplicative bias. Finally, we propose one graphical analysis to assess the equivalence of the measurements of the $ith$ laboratory with respect to the measurements of the reference laboratory.

Besides the fact that the proposed model is simple and intuitive, it presents some interesting properties. As we have only one item under testing for the entire PT program, the true unobserved value $x_j$  of the item under testing (engine power) at the $jth$ level (rotation) is also the same during the PT program. As a consequence, there is a natural dependency among all measurements at the same $jth$ level. This fact yields that the observed Information matrix converges in probability to a random matrix with components related to the mean of the measurand $(\mu_{x_j})$ being null. Thus, it is not possible to obtain consistent estimate for the parameter $\mu_{x_j}$. Subsequently, the usual asymptotic theory is not applicable to the proposed ultrastructural measurement error model.

By assuming that the measurement errors have normal distribution, we will apply the smoothness of the likelihood function to derive one suitable asymptotic theory to the ultrastructural measurement error model, as developed by \cite{weiss1971asymptotic}, \cite{weiss1973asymptotic} and \cite{sweeting1980uniform}. As a consequence of the asymptotic theory developed in Section 3, we will propose a Wald type test to evaluate the bias parameters. Moreover, we will apply the Wald statistics to develop a graphical analysis to assess the competence of each participant laboratory with respect to the reference laboratory.

In Section 2 we describe the model
and obtain the Score function, as well as, the observed information
matrix in closed form expressions. Moreover, we develop the EM
algorithm to obtain the maximum likelihood estimates (MLE) of the parameters. In
Section 3 we develop the asymptotic theory to assess the equivalence among laboratories measurements in proficiency testing.  Tests for the composite hypothesis
and confidence regions are obtained in Section 4.
%which is developed in the next Section.
Next, in Section 5, we perform a simulation study considering different number of replicas, nominal values and parameter values. In Section 6 we apply the developed methodology to the real data set collected
to perform a proficiency study. Finally,  we discuss the obtained results in Section 7.

\section{The model}
Considering the model defined by (\ref{mod}), (\ref{mod1} and \ref{mod2}), and the engine power illustration, the
covariance between the observations taken at the same value of the
engine rotation by the reference laboratory ($ith$ laboratory) is
given by $\sxj$ ($\beta_i^2\sxj$) and the covariance between the
observations of the reference laboratory and the $ith$ laboratory at
the same value of the engine rotation is given by $\beta_i \sxj$,
while the covariance between the observations of the $ith$ and $hth$
laboratory at the $jth$ engine rotation is given by $\beta_i \beta_h
\sxj$, that is: \[ cov(Y_{1jk},Y_{1jl})=\sxj;
cov(Y_{ijk},Y_{ijl})=\beta_i^2\sxj; cov(Y_{1jk},Y_{ijl})=\beta_i
\sxj;cov(Y_{ijk},Y_{hjl})=\beta_i \beta_h \sxj;\] $i,h=2,\cdots,p;$
$j=1,\cdots,m$; $k,l=1,\cdots,n_i$.

Let ${\bf Y}_{1j}=(Y_{1j1},\cdots,Y_{1jn_1})^T$ and  ${\bf Y}_{ij}=(Y_{ij1},\cdots,Y_{ijn_i})^T$ represent, respectively, the
measurements of the engine power of the reference laboratory  and the $ith$ laboratory
at the $jth$ engine rotation value, ${\bf Y}_j^n=({\bf Y}_{1j}^{\top},\cdots,{\bf
Y}_{pj}^{\top})^{\top},$ the measurements of all the laboratories at the $jth$ engine rotation value and finally,
${  \bf Y}^n=({\bf Y}_{1}^{n \top},\cdots,{\bf Y}_{m}^{n \top})^{\top}$ the observed data
with
$n=\sum_{i=1}^p n_i$. Then,  assuming that ${\bf Y}_j^n\sim
N_n(\mbox{\boldmath$\mu$}_j,\mbox{\boldmath$\Sigma$}_j)$, where
$\mbox{\boldmath$\mu$}_j=(\mbox{\boldmath$\mu$}_{1j}^T,\cdots,\mbox{\boldmath$\mu$}_{pj}^T)^T=
\mbox{\boldmath$\alpha$}+\mu_{x_j} \mbox{\boldmath$\beta$}$ and
$\mbox{\boldmath$\Sigma$}_j=D(\mbox{\boldmath$\sigma$}^2_j)+\sigma^2_{x_j}
\mbox{\boldmath$\beta$}\mbox{\boldmath$\beta$}^{\top},$ with
$\mbox{\boldmath$\alpha$}=$ $({\bf 0}_{n_1}^{\top},$ $\alpha_2{\bf
1}_{n_2}^{\top},\cdots,\alpha_p{\bf 1}_{n_p}^{\top})^{\top},$
$\mbox{\boldmath$\beta$}=({\bf 1}_{n_1}^{\top},\beta_2{\bf
1}_{n_2}^{\top},\cdots,\beta_p{\bf 1}_{n_p}^{\top})^{\top},$
$\mbox{\boldmath$\sigma$}^2_j=(\sigma^2_{1j}{\bf
1}_{n_1}^{\top},\cdots,\sigma^2_{pj}{\bf 1}_{n_p}^{\top})^{\top},$
${\bf 0}_{n_1}$ (${\bf
1}_{n_i}$) denoting a vector composed by $n_1$ zeros ($n_i$ one's) and
$D({\bf a})$ denoting the diagonal matrix with the diagonal elements
given by ${\bf a}$, we have,
${\bf Y}_{1j}\sim
N_{n_1}(\mbox{\boldmath$\mu$}_{1j},\mbox{\boldmath$\Sigma$}_{11j})$ and ${\bf Y}_{ij}\sim
N_{n_i}(\mbox{\boldmath$\mu$}_{ij},\mbox{\boldmath$\Sigma$}_{iij}),$ $i=2,\cdots,p$,
$j=1,\cdots,m$, where $\mbox{\boldmath$\mu$}_{1j}= \mu_{x_j} {\bf 1}_{n_1}$,
$\mbox{\boldmath$\Sigma$}_{11j}=\sigma^2_{1j} {\bf I}_{n_1}+\sxj {\bf
1}_{n_1}{\bf 1}_{n_1}^T$,
 $\mbox{\boldmath$\mu$}_{ij}= (\alpha_i+\beta_i \mu_{x_j}) {\bf 1}_{n_i}$,
$\mbox{\boldmath$\Sigma$}_{iij}=\sigma^2_{ij} {\bf I}_{n_i}+\beta_i^2
\sxj {\bf 1}_{n_i}{\bf 1}_{n_i}^T$, with ${\bf I}_{n_i}$   denoting the identity matrix of size $n_i$.
Furthermore,
\[
f_{{\bf Y}_j^n}({\bf y}_j^n,\mbox{\boldmath$\theta$})=(2\pi)^{-\frac{n}{2}}\mid
\mbox{\boldmath$\Sigma$}_j\mid^{-\frac{1}{2}} exp\left\{
-\frac{1}{2} ({\bf
y}_j^n-\mbox{\boldmath$\mu$}_j)^{\top}\mbox{\boldmath$\Sigma$}_j^{-1}({\bf
y}_j^n-\mbox{\boldmath$\mu$}_j)\right\},~j=1,\cdots,m\]  and
%\begin{equation} \label{likelihood_function}
\[
f_{{\bf Y}^n}({\bf y}^n, \mbox{\boldmath$\theta$} )=\prod_{j=1}^m f_{{\bf
Y}_j^n}({\bf y}_j^n, \mbox{\boldmath$\theta$})~{\rm with}~\mbox{\boldmath$\theta$}=(\mu_{x_1},\cdots,\mu_{x_m},\alpha_2,\cdots,\alpha_p,\beta_2,\cdots,\beta_p)^T=(\theta_1,\cdots,
\theta_{m+2(p-1)})^T.
\]
%\end{equation}
The log-likelihood function is given by
\be \label{loglike}
L^n(\mbox{\boldmath$\theta$})=log f_{{\bf Y}^n}({\bf y}^n , \mbox{\boldmath$\theta$})=-\frac{m
n}{2}log(2\pi)-\frac{1}{2}\sum_{j=1}^m log
a_j^n-\frac{1}{2}\sum_{j=1}^m \sum_{i=1}^p n_i log
(\sigma^2_{ij})-\frac{1}{2}\sum_{j=1}^m Q_j^n,\ee where
$a_j^n=1+\sigma^2_{x_j}\mbox{\boldmath$\beta$}^{\top}D^{-1}(\mbox{\boldmath$\sigma$}^2_j)\mbox{\boldmath$\beta$}$
and $Q_j^n=({\bf
y}_j^n-\mbox{\boldmath$\mu$}_j)^{\top}\mbox{\boldmath$\Sigma$}_j^{-1}({\bf
y}_j^n-\mbox{\boldmath$\mu$}_j)$, $j=1,\cdots,m$.

After algebraic manipulations, the elements of the score function, $U^n(\mbox{\boldmath$\theta$})=\frac{\partial
L^n(\mbox{\boldmath$\theta$})}{\partial\mbox{\boldmath$\theta$}}$, denoted by $U^n_{\theta_q}$, $q=1,\dots, m+2(p-1)$ is given by:
\[ U^n_{\mu_{x_j}}=
%\frac{\partial L(\mbox{\boldmath$\theta$})}{\partial\mu_{x_j}}=
\frac{M_j^n}{a_j^n}-\frac{\mu_{x_j}}{\sxj},
~ j=1,\cdots,m;
%\frac{\partialL(\mbox{\boldmath$\theta$})}{\partial\alpha_i}=
U^n_{\alpha_i}=-\sum_{j=1}^{m}\frac{1}{\sigma^2_{ij}}\lt[\frac{n_i \beta_i M_j^n\sxj}{a_j^n}-D_{ij}^n\rt],\]
\[
%\frac{\partial L(\mbox{\boldmath$\theta$})}{\partial\beta_i}=
U^n_{\beta_i}=-\sum_{j=1}^{m}\frac{\sxj}{a_j^n \sigma^2_{ij}}\lt\{ n_i
\beta_i+M_j^n\lt[\frac{n_i \beta_i \sxj M_j^n}{a_j^n}-D_{ij}^n\rt]\rt\},~i=2,\cdots,p,\] with
$M_j^n=\frac{\mu_{x_j}}{\sxj}+\frac{{\bf Y}_{1j}^T {\bf
1}_{n_1}}{\sigma^2_{1j}}+\sum_{i=2}^p\frac{\beta_i D_{ij}^n}{\sij}$ and $D_{ij}^n=({\bf
Y}_{ij}-\alpha_i {\bf 1}_{n_i})^T {\bf 1}_{n_i}.$

Subsequently, the observed  information matrix, $\frac{J^n(\mbox{\boldmath$\theta$})}{n}=-\frac{1}{n}\frac{\partial^2
L^n(\mbox{\boldmath$\theta$})}{\partial\mbox{\boldmath$\theta$}\partial\mbox{\boldmath$\theta$}^T}$ was obtained in closed form expressions and  can be found in Appendix B.1.
\subsection{EM-algorithm}
In this subsection we are going to outline the EM-algorithm (\cite{Dempster1977}) used to
obtain the estimates of the parameters. In measurement error models, if the latent data $x_j$, $j=1,\cdots,m$, is introduced to augment
the observed data, the maximum likelihood estimates (MLE) of the parameters based on the augmented data (complete data) become easy to obtain.
Considering the model defined by (\ref{mod}) and (\ref{mod1})  and the
observed data for the $jth$ engine rotation value, ${\bf Y}_j^n=({\bf
Y}_{1j}^{\top},\cdots,{\bf Y}_{pj}^{\top})^{\top}$, we augment ${\bf
Y}_j^n$ by considering the unobserved data $x_j$. Then, the complete
data for the $jth$ engine rotation value is given by ${\bf
Y}_{jc}^n=(x_j, {\bf Y}_j^{nT})^T$,
with
$E\left(\mbox{\boldmath$Y$}
_{jc}^n\right)=\mbox{\boldmath$\mu$}_{jc}=(\mu_{x_j},\mbox{\boldmath$\mu$}_j^T)^T$ and the
covariance matrix given by
$\mbox{\boldmath$\Sigma$}_{jc}=\lt(\begin{array}{cc}
\sxj&\mbox{\boldmath$\Sigma$}_{12j}\\
\mbox{\boldmath$\Sigma$}_{21j}&\mbox{\boldmath$\Sigma$}_j\\
\end{array}\rt)$, $j=1,\cdots,m$, with
$\mbox{\boldmath$\Sigma$}_{12j}=\mbox{\boldmath$\Sigma$}_{21j}^T=\sxj\mbox{\boldmath$\beta$}^T,$
$\mbox{\boldmath$\mu$}_j$ and
$\mbox{\boldmath$\Sigma$}_j$ as given in Section 2.
Furthermore, ${\bf Y}_{jc}^n\sim
N_{n+1}(\mbox{\boldmath$\mu$}_{jc}, \mbox{\boldmath$\Sigma$}_{jc})$ and let ${\bf Y}_{c}^n=({\bf Y}_{1c}^{n T},\cdots,{\bf Y}_{mc}^{n T})^T$, then
\[
f_{\mbox{\boldmath$Y$}_c^n}(\mbox{\boldmath$y$}_c^n)=\prod_{j=1}^m f_{\mbox{\boldmath$Y$}_{jc}^n}(\mbox{\boldmath$y$}_{jc}^n)
\]
\[=(2\pi)^{-\frac{m(n+1)}{2}}\lt[\prod_{j=1}^m\lt(\sxj
(\sigma^2_{1j})^{n_1}\cdots(\sigma^2_{pj})^{n_p}\rt)^{-\frac{1}{2}}
\rt]exp\lt\{-\sum_{j=1}^m\frac{1}{2}({\bf
Y}_{jc}^n-\mbox{\boldmath$\mu$}_{jc})^T
\mbox{\boldmath$\Sigma$}_{jc}^{-1}({\bf
Y}_{jc}^n-\mbox{\boldmath$\mu$}_{jc})\rt\}.
\]
It follows that the log likelihood function of the complete data is
given by
\[
L_c(\mbox{\boldmath$\theta$})=cte-\frac{1}{2}\sum_{j=1}^m(log\sxj+\sum_{i=1}^p
n_i
log\sij)-\frac{1}{2}\lt[\sum_{j=1}^m\frac{(x_j-\mu_{x_j})^2}{\sxj}+\rt.\]
\[\lt.\sum_{j=1}^m
\sum_{k=1}^{n_1}\frac{(Y_{1jk}-x_j)^2}{\sigma^2_{1j}}+\sum_{j=1}^m\sum_{i=2}^p\sum_{k=1}^{n_i}\frac{(Y_{ijk}-\alpha_i-\beta_i
x_j)^2}{\sij}\rt],\]
which is much simpler than (\ref{loglike}).
Given the estimates of $\mbox{\boldmath$\theta$}$ in the $(r-1)th$
iteration, $\mbox{\boldmath$\theta$}^{(r-1)}$, the E step consists in
the obtention of the expectation of the complete data log-likelihood
function,
 $L_c(\mbox{\boldmath$\theta$})$, with respect to the conditional distribution
 of ${\bf x}=(x_1,\cdots,x_m)^T$ given the observed data ,$\textbf{Y}^n$, and
$\mbox{\boldmath$\theta$}^{(r-1)}$. The M step consists in the
maximization  of the function obtained in the E step with respect to
$\mbox{\boldmath$\theta$}$, which gives the estimates of the
parameters for the next iteration,
$\mbox{\boldmath$\theta$}^{(r)}.$ Each iteration of the EM
algorithm increments the log-likelihood function of the observed
data $L^n(\mbox{\boldmath$\theta$})
%/ \textbf{Y})
$, i.e.,
$L^n(\mbox{\boldmath$\theta$}^{(r-1)})
%/ \textbf{Y})
\leq
L^n(\mbox{\boldmath$\theta$}^{(r)})
%/ \textbf{Y})
.$ When the
likelihood function of the complete data belongs to the exponential
family, the implementation of the EM algorithm is usually simple. In
our case, the E step consists in the obtention of
$\mathbb{E}_{\mbox{\boldmath$\theta$}^{(r-1)}}\lt[x_j/{\bf Y}^n_j\rt]$
and $\mathbb{E}_{\mbox{\boldmath$\theta$}^{(r-1)}}\lt[x_j^2/{\bf Y}^n_j\rt]$, $j=1,\cdots,m$. In the M step, we maximize
the log-likelihood function of the complete data where the values of
the sufficient statistics were substituted by the expected values
obtained in the E step.

\medskip
The EM-algorithm for the model defined by (\ref{mod}) and (\ref{mod1}) may be summarized as follows.
\medskip

E-step: considering the properties of the multivariate normal
 distribution, the E step consists in the obtention of
\begin{eqnarray*}
{\widehat{x}_j}^{(r)}=
\mathbb{E}_{\mbox{\boldmath$\theta$}^{(r-1)}}\lt[x_j/{\bf Y}_j^n\rt]=
\frac{\sxj M_j^{n(r-1)}}{a_j^{n(r-1)}},
\end{eqnarray*}
where $M_j^{n(r-1)}=\lt[\frac{\mu_{x_j}^{(r-1)}}{\sxj}+\sum_{k=1}^{n_1}
\frac{Y_{1jk}}{\sigma^2_{1j}}+\sum_{i=2}^p\frac{\beta_i^{(r-1)}}{\sij}(\sum_{k=1}^{n_i}
Y_{ijk}-n_i \alpha_i^{(r-1)})\rt]$
and
\begin{eqnarray*}
\widehat{x^2_j}^{(r)}&=&\mathbb{E}_{\mbox{\boldmath$\theta$}^{(r-1)}}\lt[x_j^2/{\bf Y}^n_j\rt]=\frac{\sxj}{a_j^{n(r-1)}}+\left(\hat
x^{(r)}_j\right)^2,
\end{eqnarray*}
with $a_j^{n(r-1)}$ representing the value of $a_j^n$ evaluated at
$\mbox{\boldmath$\theta$}^{(r-1)}$.
\medskip

M-step: the M-step consists in the obtention of

\[\widehat{\mu}_{x_j}^{(r)}={\widehat{x}_j}^{(r)},~j=1,\cdots,m,\]
\[
\widehat{\beta_i}^{(r)}=\frac{\lt(\sum_{j=1}^m
\frac{{\widehat{x}_j}^{(r)}}{\sij} \sum_{k=1}^{n_i}Y_{ijk}
\rt)\lt(\sum_{j=1}^m
\frac{1}{\sij}\rt)-\lt(\sum_{j=1}^m\frac{{\widehat{x}_j}^{(r)}}{\sij}
\rt)\lt(\sum_{j=1}^m\frac{1}{\sij} \sum_{k=1}^{n_i}Y_{ijk}
\rt)}{n_i\lt[\lt(\sum_{j=1}^m\frac{\widehat{x^2_j}^{(r)}}{\sij}
\rt)\lt( \sum_{j=1}^m \frac{1}{\sij}\rt)-\lt( \sum_{j=1}^m
\frac{{\widehat{x}_j}^{(r)}}{\sij}\rt)^2 \rt]}~{\rm and}
\]
\[
\widehat{\alpha_i}^{(r)}=\frac{\lt(\sum_{j=1}^m\frac{1}{\sij}
\sum_{k=1}^{n_i}Y_{ijk}-n_i\widehat{\beta}_i^{(r)}
\sum_{j=1}^m\frac{ {\widehat{x}_j}^{(r)}}{\sij}\rt)}{n_i
\sum_{j=1}^m\frac{1}{\sij}},~i=2,\cdots,p.
\]

Notice that  closed form expressions were obtained for all the
expressions in the M-step, which means that this procedure will be
computationally inexpensive and also it is very simple to implement.
\section{Asymptotic Theory}
In this section, we will develop the asymptotic theory necessary to prove the consistency and the asymptotic distribution of the MLE regarding the bias parameters.  In the sequel, we will apply the regularity properties of the likelihood function to establish the asymptotic results, as proposed by \cite{weiss1971asymptotic}, \cite{weiss1973asymptotic} and \cite{sweeting1980uniform}. For a given $\mbox{\boldmath$\theta$}$, we define $$\mathbb{P}^n_{\mbox{\boldmath$\theta$}, (x_1, \cdots , x_m)} (B^n) = \int_{B^n} f_{{\bf Y}^n_c}({\bf y}^n_c, \mbox{\boldmath$\theta$} ) d  {\bf y}_c^n , $$ for every $B^n \in \beta (\mathbb{R}^{m(n+1)})$, where $\beta (\mathbb{R}^{m(n+1)})$ is the Borel $\sigma$-algebra. By applying Kolmogorov extension theorem, there exists a unique probability $\mathbb{P}_{\mbox{\boldmath$\theta$}, (x_1, \cdots , x_m)}$ defined on $(\mathbb{R}^\infty, \beta( \mathbb{R}^\infty))$ such that $$\mathbb{P}_{\mbox{\boldmath$\theta$}, (x_1, \cdots , x_m)} (B^n \times \mathbb{R} \times \mathbb{R} \times \cdots ) = \mathbb{P}^n_{\mbox{\boldmath$\theta$}, (x_1, \cdots , x_m)} (B^n),$$ for every $B^n \in \beta (\mathbb{R}^{m(n+1)})$. The marginal distribution of the observed data will be denoted by $\mathbb{P}_{\mbox{\boldmath$\theta$}}$ and the marginal distribution of the unobserved variables $(x_1, \cdots ,x_m)$ will be denoted by $\mathbb{P}_{(x_1, \cdots , x_m)}$. We say that $n \rightarrow \infty$ when $n_i \rightarrow \infty$ and $\frac{n_i}{n} \rightarrow w_i$ where $w_i$ is a positive constant for every $i=1, \cdots , p$.

Let $M^{\ell}$ be the space of all $\ell \times \ell$ matrices. The norm $\parallel A \parallel$ of the matrix $A$ is $\parallel A \parallel = \max \{ \mid A_{hs} \mid: h,s=1 \cdots , \ell\}$. A sequence of matrices $\{A^u: u=1, 2, \cdots \}$ converges to a limit $A$ if, and only if, $\parallel A^u - A \parallel \rightarrow 0$. If the matrix $A$ is positive definite, we write $A > 0$. In this case, $A^{1/2}$ denotes the symmetric positive square root of $A$. In the same way, for a given vector ${\bf v}=(v_1, \cdots , v_\ell) \in \mathbb{R}^\ell$, we consider the norm of ${\bf v}$ as follows $\mid {\bf v} \mid = \max \{ \mid v_1 \mid, \cdots , \mid v_\ell \mid\}$.

We denote by $B_c(n , \mbox{\boldmath$\theta$})$ the set of all vectors $\mbox{\boldmath$\psi$} \in \mathbb{R}^{m+2(p-1)}$ such that $\sqrt{n} \mid \mbox{\boldmath$\psi$} - \mbox{\boldmath$\theta$} \mid \leq c$, where $c$ is a positive constant. Moreover, we denote by $R_c(n , \mbox{\boldmath$\theta$})$ the set of all random vectors $\mbox{\boldmath$\phi$}$ with values in $\mathbb{R}^{m+2(p-1)}$ such that $\sqrt{n} \mid \mbox{\boldmath$\phi$} - \mbox{\boldmath$\theta$} \mid \leq c$. Here, we take the random vector $\mbox{\boldmath$\phi$}$ as function of the observed data ${\bf Y}^n$.

\begin{lemma} \label{conv_obs} For any sequences $\{\mbox{\boldmath$\psi$}^n: \mbox{\boldmath$\psi$}^n \in B_c(n,\mbox{\boldmath$\theta$}), ~ n \geq 1\}$ and $\{\mbox{\boldmath$\phi$}^n: \mbox{\boldmath$\phi$}^n \in R_c(n,\mbox{\boldmath$\theta$}), ~ n \geq 1\}$ there exist a  positive semidefinite random matrix $W(\mbox{\boldmath$\theta$})$ such that
\[
\mathbb{P}_{\mbox{\boldmath$\psi$}^n, (x_1, \cdots , x_m)} \left[  \parallel \frac{1}{n} J^n({\bm \phi}^n) - W(\mbox{\boldmath$\theta$}) \parallel \geq \epsilon \right] \rightarrow 0, \quad n \rightarrow \infty.
\]

\end{lemma}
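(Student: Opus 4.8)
The plan is to reduce the matrix statement to the convergence of finitely many scalar sample averages, exploiting the closed-form entries of $J^n(\boldsymbol{\theta})$ recorded in Appendix B.1. Every entry of $\tfrac{1}{n}J^n(\boldsymbol{\theta})$ is a rational function of the building blocks $a_j^n$, $M_j^n$, $D_{ij}^n$ and ${\bf Y}_{1j}^{\top}{\bf 1}_{n_1}$ together with the fixed quantities $\sigma^2_{ij}$, $\sigma^2_{x_j}$ and the coordinates of $\boldsymbol{\theta}$, so the first step is to record the exact order in $n$ of each block. Using $n_i/n\to w_i$ one checks that $\tfrac{1}{n}a_j^n$, $\tfrac{1}{n}M_j^n$, $\tfrac{1}{n}D_{ij}^n$ and $\tfrac{1}{n}{\bf Y}_{1j}^{\top}{\bf 1}_{n_1}$ are all $O(1)$, while the denominators $a_j^n$ are bounded below by a positive multiple of $n$. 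Consequently the $(\boldsymbol{\alpha},\boldsymbol{\beta})$ block of $\tfrac{1}{n}J^n$ stays $O(1)$, whereas every entry involving some $\mu_{x_j}$ is at most $O_P(1)$ before normalization and hence vanishes after division by $n$. This already produces the announced null rows and columns of $W(\boldsymbol{\theta})$.

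Second, I would compute the scalar limits under the moving law $\mathbb{P}_{\boldsymbol{\psi}^n,(x_1,\dots,x_m)}$. Under this law one has $Y_{ijk}=\psi^n_{\alpha_i}+\psi^n_{\beta_i}x_j+e_{ijk}$ with $e_{ijk}$ centred normal of fixed variance $\sigma^2_{ij}$ and $x_j$ a realized latent value; since $\boldsymbol{\psi}^n\to\boldsymbol{\theta}$ and $\boldsymbol{\phi}^n\to\boldsymbol{\theta}$, the coefficients multiplying $x_j$ and the constants converge to their values at $\boldsymbol{\theta}$, while the error averages $\tfrac{1}{n}\sum_k e_{ijk}$, $\tfrac{1}{n}\sum_k e_{ijk}^2$ and the relevant cross terms converge (by the $L^2$ weak law, valid because the normal errors have all moments) to $0$, $w_i\sigma^2_{ij}$, and so on. Feeding these limits into the rational expressions yields a matrix $W(\boldsymbol{\theta})$ whose nonzero block is an explicit function of $(x_1,\dots,x_m)$ and of $\boldsymbol{\theta}$; the randomness of $W(\boldsymbol{\theta})$ through the realized $x_j$ is precisely the nonergodic feature anticipated in the introduction. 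This block can be recognized as the large-sample limit of the Fisher information of the conditional regression $Y_{ijk}\mid x_j\sim N(\alpha_i+\beta_i x_j,\sigma^2_{ij})$, which simultaneously explains why the $\mu_{x_j}$ directions carry no asymptotic information (they do not enter the conditional mean) and, being a Fisher information, shows that $W(\boldsymbol{\theta})$ is positive semidefinite.

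The remaining point is to replace the fixed evaluation point $\boldsymbol{\theta}$ by the random point $\boldsymbol{\phi}^n$. For this I would write, coordinatewise, $\tfrac{1}{n}J^n(\boldsymbol{\phi}^n)-\tfrac{1}{n}J^n(\boldsymbol{\theta})$ by a first order Taylor expansion and bound it by $\sup_{\boldsymbol{\xi}\in B_c(n,\boldsymbol{\theta})}\big\|\tfrac{1}{n}\,\partial J^n(\boldsymbol{\xi})/\partial\boldsymbol{\theta}\big\|\cdot|\boldsymbol{\phi}^n-\boldsymbol{\theta}|$. The third derivatives of $L^n$ are again explicit rational functions of the same building blocks with denominators bounded below by a multiple of $n$, so the supremum over the shrinking ball is $O_P(1)$ under $\mathbb{P}_{\boldsymbol{\psi}^n}$, while $|\boldsymbol{\phi}^n-\boldsymbol{\theta}|\le c/\sqrt{n}$ by definition of $R_c(n,\boldsymbol{\theta})$; hence the difference is $O_P(1/\sqrt{n})$ and is negligible. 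Combining this with the fixed-point convergence of the previous step gives $\parallel\tfrac{1}{n}J^n(\boldsymbol{\phi}^n)-W(\boldsymbol{\theta})\parallel\to 0$ in $\mathbb{P}_{\boldsymbol{\psi}^n,(x_1,\dots,x_m)}$-probability, and since the limit depends only on $\boldsymbol{\theta}$ and the realized $x_j$ it is the same $W(\boldsymbol{\theta})$ for every admissible pair of sequences.

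I expect the main obstacle to be exactly this last transfer: controlling the third-derivative remainder \emph{uniformly} over the random ball $R_c(n,\boldsymbol{\theta})$ and \emph{simultaneously} along the moving sequence of laws $\mathbb{P}_{\boldsymbol{\psi}^n}$. One must verify that none of the denominators $a_j^n(\boldsymbol{\xi})$ degenerate as $\boldsymbol{\xi}$ ranges over $B_c(n,\boldsymbol{\theta})$ and that the numerators remain $O_P(n)$ uniformly, which calls for a uniform (in $\boldsymbol{\xi}$) weak law rather than a pointwise one; the continuity of the entries and the fact that the ball contracts to $\boldsymbol{\theta}$ at rate $1/\sqrt{n}$ are what make this step succeed.
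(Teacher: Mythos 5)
Your overall strategy --- reduce each entry of $\tfrac1n J^n$ to rational functions of the building blocks $a_j^n$, $M_j^n$, $D_{ij}^n$, establish their orders in $n$ using $n_i/n\to w_i$, apply the law of large numbers to the error averages under the moving law $\mathbb{P}_{\boldsymbol{\psi}^n,(x_1,\dots,x_m)}$, and finish with the continuous mapping theorem --- is exactly what the paper does in Appendix B.2, and your account of the null $\mu_{x_j}$ rows (numerators $O_P(n)$ against denominators $a_j^n\asymp n$, killed by the extra $1/n$) matches the paper's computation. Where you genuinely diverge is in handling the random evaluation point: you propose to prove convergence at the fixed $\boldsymbol{\theta}$ and then control $\tfrac1n J^n(\boldsymbol{\phi}^n)-\tfrac1n J^n(\boldsymbol{\theta})$ by a Taylor bound on the third derivatives of $L^n$, uniformly over the shrinking ball --- and you correctly flag that uniformity as the hard part. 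The paper sidesteps this entirely: it writes $\boldsymbol{\phi}^n=\boldsymbol{\theta}+\delta^n c/\sqrt n$ with $0<\delta^n<1$ and substitutes this \emph{directly} into the closed forms of $a_j^n(\boldsymbol{\phi}^n)$, $D_{ij}^n(\boldsymbol{\phi}^n)$, $M_j^n(\boldsymbol{\phi}^n)$, so the perturbation enters only through explicitly bounded terms such as $2\beta_i\delta^n c/\sqrt n+(\delta^n c/\sqrt n)^2$ and is absorbed by the triangle inequality; no third derivatives and no supremum over $B_c(n,\boldsymbol{\theta})$ are ever needed. Your route would work, but it costs an extra layer of estimates (and the one step you yourself identify as the obstacle) that the direct substitution makes unnecessary.

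One genuine inaccuracy: $W(\boldsymbol{\theta})$ is \emph{not} the limiting Fisher information of the conditional regression $Y_{ijk}\mid x_j\sim N(\alpha_i+\beta_i x_j,\sigma^2_{ij})$. That information would be block-diagonal across laboratories, whereas the paper's $W$ has nonzero cross terms $W_{(\alpha_i\alpha_l)}$ and $W_{(\beta_i\beta_l)}$ for $i\neq l$, arising from the subtracted terms with denominator $\sum_{q}\beta_q^2/\sigma^2_{qj}$. The correct reading is the conditional information \emph{profiled over the unknown} $x_j$, i.e.\ a Schur complement in the augmented $(x_j,\alpha,\beta)$ information; this is still positive semidefinite because $\sum_{q=1}^p\beta_q^2/\sigma^2_{qj}$ dominates the relevant quadratic form, so your conclusion survives even though the justification as stated does not.
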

\begin{proof} See Appendix B.2.

\end{proof}

The random matrix $W(\mbox{\boldmath$\theta$})$ has two important features. First, every component associated with $\mu_{x_j}$ is null, it means that we do not have enough information to estimate $\mu_{x_j}$ in a consistent way. This is a consequence of the fact that, for any level $j=1, \cdots, m$,
the same item (engine) is measured by all the laboratories
 under the same conditions. Second, the matrix $W(\mbox{\boldmath$\theta$})$ is random and the model is considered nonergodic. As a consequence the score random process $U^n(\mbox{\boldmath$\theta$})$ is nonergodic. Furthermore, the components associated with $\mu_{x_j}$ satisfy % (see Appendix),

\begin{equation} \label{null_inform_VV}
\mathbb{P}_{\mbox{\boldmath$\psi$}^n, (x_1, \cdots , x_m)} \left[ \left|\frac{1}{\sqrt{n}} U^n_{\mu_{x_j}} (\mbox{\boldmath$\theta$}) \right| \geq \epsilon \right] \rightarrow 0, \quad \epsilon > 0, ~ ~ j=1,2,\cdots , m.
\end{equation} We will denote by $\tilde{U}^n(\mbox{\boldmath$\theta$})$ and $\tilde{J}^n(\mbox{\boldmath$\theta$})/n$ the score vector and the observed information matrix without the components involving $\mu_{x_j}$, respectively. We also denote by $\tilde{W}(\mbox{\boldmath$\theta$})$ the random matrix $W(\mbox{\boldmath$\theta$})$ without the components involving $\mu_{x_j}$. Moreover, we denote the bias components of the vector of parameters by $\tilde{\mbox{\boldmath$\theta$}}=(\alpha_2,\cdots,\alpha_p,\beta_2,\cdots,\beta_p)^T \in \mathbb{R}^{2(p-1)}$ and $\hat{\tilde{\mbox{\boldmath$\theta$}}}^n$ the related MLE. Furthermore, it follows from the Appendix B.2 that the random matrices $W(\mbox{\boldmath$\theta$})$ and $\tilde{W} (\mbox{\boldmath$\theta$})$ depend only on the bias components of the vector of parameters. As a consequence, from now on,  we will denote $W(\mbox{\boldmath$\theta$})$ and $\tilde{W} (\mbox{\boldmath$\theta$})$ by $W (\tilde{\mbox{\boldmath$\theta$}})$
and $\tilde{W} (\tilde{\mbox{\boldmath$\theta$}})$, respectively.

Let $\{g^n : n \geq 1\}$ be a sequence of real continuous function defined on a metric space, we say that $g^n (\tau)$ converges uniformly in $\tau$ to $g(\tau)$ if $g^n (\tau^n) \rightarrow g(\tau)$ for every sequence $\tau^n \rightarrow \tau$. Let $\lambda_\tau$ and $\{\lambda_{\tau}^n : n \geq 1 \}$ be probabilities defined on the Borel subsets of a metric space depending on the arbitrary parameter $\tau$, and let $C$ be the space of real bounded uniformly continuous functions.  We shall say that $\lambda^n_{\tau} \Rightarrow_u \lambda_\tau$ uniformly if
$$ \int g d \lambda_{\tau}^n \rightarrow \int g d \lambda_{\tau} \quad \text{uniformly in} ~ \tau, \quad \text{for all} ~ g \in C.$$ If $Q$ is a metric space and $\tau \in Q$, the family $\lambda_\tau$ of probabilities is continuous in $\tau$ if $\lambda_{\tau^n} \Rightarrow \lambda_\tau$ whenever $\tau^n \rightarrow \tau$ in $Q$.

As described in the Appendix B.2, the random matrix $W(\tilde{{\bm \theta}})$ is a function of the unobserved variables $(x_1 , \cdots , x_m)$ and the parameter $\tilde{{\bm \theta}}$. Then, it is defined on the probability space $(\mathbb{R}^m , \beta(\mathbb{R}^m) , \mathbb{P}_{(x_1, \cdots , x_m)})$. We denote by $\mathbb{G}_{\tilde{\mbox{\boldmath$\theta$}}}$ the distribution of the random matrix $W(\tilde{{\bm \theta}})$.

\begin{lemma} \label{conv_uniformly_weakly} Given the sequences  $\{\mbox{\boldmath$\psi$}^n: \mbox{\boldmath$\psi$}^n \in B_c(n,\mbox{\boldmath$\theta$}), ~ n \geq 1\}$ and $\{\mbox{\boldmath$\phi$}^n: \mbox{\boldmath$\phi$}^n \in R_c(n,\mbox{\boldmath$\theta$}), ~ n \geq 1\}$ and  $g: M^{m + 2(p-1)} \rightarrow \mathbb{R}$ a bounded continuous function, then

\[
\mathbb{E}_{\mbox{\boldmath$\psi$}^n} \left[ g \left(\frac{1}{n} J^n({\bm \phi}^n) \right) \right] = \int g \left(\frac{1}{n} J^n({\bm \phi}^n) \right) d \mathbb{P}_{\mbox{\boldmath$\psi$}^n} \rightarrow \int g \left(W(\tilde{{\bm \theta}}) \right) d \mathbb{P}_{(x_1, \cdots , x_m)} = \mathbb{E}_{(x_1, \cdots , x_m)} \left[ g \left( W(\tilde{{\bm \theta}}) \right) \right],
\] for every ${\bm \theta} \in \mathbb{R}^{m+2(p-1)}$. Moreover, the distribution  $\mathbb{G}_{\tilde{\mbox{\boldmath$\theta$}}}$ of $W(\tilde{{\bm \theta}})$ is continuous in $\tilde{{\bm \theta}}$.
\end{lemma}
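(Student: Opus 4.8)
\section*{Proof proposal}

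The plan is to deduce the stated convergence of expectations from the in-probability convergence of Lemma \ref{conv_obs}, exploiting the bounded continuity of $g$, and then to carefully separate the roles played by the observed data and by the unobserved variables $(x_1, \cdots, x_m)$. The first observation is that, since ${\bm \phi}^n$ is a function of the observed data ${\bf Y}^n$ and $J^n$ is computed from ${\bf Y}^n$, the quantity $g(\frac{1}{n}J^n({\bm \phi}^n))$ is measurable with respect to the observed data alone. Integrating out the unobserved variables therefore leaves its expectation unchanged, so that
\[
\mathbb{E}_{{\bm \psi}^n}\left[ g\left(\frac{1}{n}J^n({\bm \phi}^n)\right)\right] = \mathbb{E}_{{\bm \psi}^n, (x_1, \cdots, x_m)}\left[ g\left(\frac{1}{n}J^n({\bm \phi}^n)\right)\right],
\]
and it suffices to carry out the analysis under the joint law $\mathbb{P}_{{\bm \psi}^n, (x_1, \cdots, x_m)}$.

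Next I would invoke Lemma \ref{conv_obs}, which gives $\frac{1}{n}J^n({\bm \phi}^n) \to W(\tilde{{\bm \theta}})$ in $\mathbb{P}_{{\bm \psi}^n, (x_1, \cdots, x_m)}$-probability; since $g$ is bounded and continuous, $g(\frac{1}{n}J^n({\bm \phi}^n)) \to g(W(\tilde{{\bm \theta}}))$ in probability as well, and the bounded convergence theorem for convergence in probability yields
\[
\mathbb{E}_{{\bm \psi}^n, (x_1, \cdots, x_m)}\left[ g\left(\frac{1}{n}J^n({\bm \phi}^n)\right)\right] - \mathbb{E}_{{\bm \psi}^n, (x_1, \cdots, x_m)}\left[ g\left(W(\tilde{{\bm \theta}})\right)\right] \to 0.
\]
It then remains to replace, in the second term, the joint law $\mathbb{P}_{{\bm \psi}^n, (x_1, \cdots, x_m)}$ by the limiting law $\mathbb{P}_{(x_1, \cdots, x_m)}$. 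Here I would use that $W(\tilde{{\bm \theta}})$ is a function of $(x_1, \cdots, x_m)$ alone (as described in Appendix B.2), so this expectation is taken over the $(x_1, \cdots, x_m)$-marginal under ${\bm \psi}^n$; since each $x_j$ is normal with variance $\sigma^2_{x_j}$ and mean equal to the $\mu_{x_j}$-component of ${\bm \psi}^n$, and since ${\bm \psi}^n \to {\bm \theta}$, this marginal converges weakly to $\mathbb{P}_{(x_1, \cdots, x_m)}$. Because $g(W(\cdot))$ is a bounded continuous function of $(x_1, \cdots, x_m)$, the definition of weak convergence produces the convergence to $\mathbb{E}_{(x_1, \cdots, x_m)}[g(W(\tilde{{\bm \theta}}))]$; chaining the three displays gives the first assertion.

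For the continuity of $\mathbb{G}_{\tilde{{\bm \theta}}}$ in $\tilde{{\bm \theta}}$, it is enough by the portmanteau theorem to establish $\mathbb{E}_{(x_1, \cdots, x_m)}[g(W(\tilde{{\bm \theta}}^n))] \to \mathbb{E}_{(x_1, \cdots, x_m)}[g(W(\tilde{{\bm \theta}}))]$ for every bounded continuous $g$ whenever $\tilde{{\bm \theta}}^n \to \tilde{{\bm \theta}}$. The closed-form expression in Appendix B.2 shows that, for each fixed $(x_1, \cdots, x_m)$, the matrix $W(\tilde{{\bm \theta}})$ is continuous in $\tilde{{\bm \theta}}$, whence $g(W(\tilde{{\bm \theta}}^n)) \to g(W(\tilde{{\bm \theta}}))$ pointwise in $(x_1, \cdots, x_m)$; the boundedness of $g$ then permits the dominated convergence theorem to close the argument.

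I expect the main obstacle to be the careful bookkeeping between the two marginals: the left-hand expectation lives on the observed-data space under ${\bm \psi}^n$, while the limit lives on the space of the unobserved variables under ${\bm \theta}$, and the bridge is exactly the fact that $\frac{1}{n}J^n({\bm \phi}^n)$ is measurable in the observed data whereas its limit $W(\tilde{{\bm \theta}})$ is measurable in $(x_1, \cdots, x_m)$. Establishing the weak convergence of the $x$-marginal as ${\bm \psi}^n \to {\bm \theta}$, together with the required joint continuity of $W$ in $(x_1, \cdots, x_m, \tilde{{\bm \theta}})$, rests on the explicit representation of $W$ derived in Appendix B.2.
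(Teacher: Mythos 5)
Your proposal is correct, but it takes a noticeably different route from the paper. The paper's proof is essentially an appeal to Sweeting's machinery: Lemma \ref{conv_obs} gives uniform convergence in probability of $\frac{1}{n}J^n({\bm \phi}^n)$ to $W(\tilde{{\bm \theta}})$, Sweeting's Lemma 2 upgrades this to uniform convergence in distribution, Sweeting's Lemma 1 then delivers $\int g\,d\mathbb{P}_{{\bm \psi}^n,(x_1,\cdots,x_m)} \rightarrow \int g\,d\mathbb{P}_{\tilde{{\bm \theta}},(x_1,\cdots,x_m)}$ directly, and the two marginalization identities finish the argument; the continuity of $\mathbb{G}_{\tilde{{\bm \theta}}}$ is simply cited from Sweeting's Lemma 3. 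You instead split the limit into two elementary steps: a bounded-convergence argument under the shifting joint laws (showing the difference of the two expectations under $\mathbb{P}_{{\bm \psi}^n,(x_1,\cdots,x_m)}$ vanishes), followed by a weak-convergence argument for the $(x_1,\cdots,x_m)$-marginal as ${\bm \psi}^n \rightarrow {\bm \theta}$, using the explicit normality of the $x_j$'s and the continuity of $W$ in $(x_1,\cdots,x_m)$ from Appendix B.2; likewise you prove the continuity of $\mathbb{G}_{\tilde{{\bm \theta}}}$ by hand via pointwise continuity of $W$ in $\tilde{{\bm \theta}}$ and dominated convergence. Your version is more self-contained and, usefully, makes explicit the point the paper buries inside Sweeting's Lemma 1 — namely that the law of the limit $W(\tilde{{\bm \theta}})$ itself shifts with ${\bm \psi}^n$ through the $\mu_{x_j}$-components and one must pass to the marginal under the limiting parameter. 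Two small technical remarks: your continuous-mapping step ($g$ of a sequence converging in probability) needs either uniform continuity of $g$ or tightness of $W(\tilde{{\bm \theta}})$ under the family $\{\mathbb{P}_{{\bm \psi}^n,(x_1,\cdots,x_m)}\}$ — the latter follows from the marginal weak convergence you establish anyway, so it is worth one sentence; and you should note that your argument, being valid for \emph{every} sequence ${\bm \psi}^n \in B_c(n,{\bm \theta})$ and ${\bm \phi}^n \in R_c(n,{\bm \theta})$, does yield the uniformity that the paper needs in its subsequent applications of this lemma.
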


\begin{proof}The fact that  $\mathbb{G}_{\tilde{\mbox{\boldmath$\theta$}}}$ is continuous follows from \cite{sweeting1980uniform}, Lemma 3.
It follows from Lemma \ref{conv_obs} that $\frac{1}{n} J^n({\bm \phi}^n)$ converges uniformly in probability to $W(\tilde{{\bm \theta}})$. As uniformly convergence in probability implies uniformly convergence in distribution (\cite{sweeting1980uniform}, Lemma 2), it follows from Lemma 1 in \cite{sweeting1980uniform} that

\begin{equation}
 \int g \left(\frac{1}{n} J^n({\bm \phi}^n) \right) d \mathbb{P}_{\mbox{\boldmath$\psi$}^n , (x_1, \cdots , x_m)}
\rightarrow \int g \left(W(\tilde{{\bm \theta}}) \right) d \mathbb{P}_{\tilde{\mbox{\boldmath$\theta$}} ,(x_1, \cdots , x_m)},
\end{equation} for any bounded continuous function $g$. As $J^n({\bm \phi}^n)$ depends only on ${\bf Y}^n$, we conclude that

\[
\int g \left(\frac{1}{n} J^n({\bm \phi}^n) \right) d \mathbb{P}_{\mbox{\boldmath$\psi$}^n }  = \int g \left(\frac{1}{n} J^n({\bm \phi}^n) \right) d \mathbb{P}_{\mbox{\boldmath$\psi$}^n , (x_1, \cdots , x_m)}.
\] The fact that $W(\tilde{{\bm \theta}})$ depends only on the unobservable variables $(x_1, \cdots , x_m)$ yields

\[
\int g \left(W(\tilde{{\bm \theta}}) \right) d \mathbb{P}_{(x_1, \cdots , x_m)} = \int g \left(W(\tilde{{\bm \theta}}) \right) d \mathbb{P}_{\tilde{\mbox{\boldmath$\theta$} },(x_1, \cdots , x_m)}.
\]  As by product we conclude the Lemma.
\end{proof}

Let ${\bf s} \in \mathbb{R}^{m+2(p-1)}$ be a vector and let $\{{\bm \theta}^n : {\bm \theta}^n \in B_c(n , {\bm \theta}), n \geq 1\}$ be a sequence of parameters.  We define $ \mbox{\boldmath$\psi$}^n = \mbox{\boldmath$\theta$}^n + \frac{1}{\sqrt{n}}{\bf s}$ a vector in $\mathbb{R}^{m+2(p-1)}$ such that $\mbox{\boldmath$\psi$}^n \rightarrow \mbox{\boldmath$\theta$}$ as $n \rightarrow \infty$. As $L^n(\mbox{\boldmath$\theta$}^n)$ is a smooth function , we may write
\begin{eqnarray} \nonumber
L^n (\mbox{\boldmath$\psi$}^n) &=& L^n (\mbox{\boldmath$\theta$}^n) + (\mbox{\boldmath$\psi$}^n - \mbox{\boldmath$\theta$}^n)^T U^n(\mbox{\boldmath$\theta$}^n) - \frac{1}{2} (\mbox{\boldmath$\psi$}^n - \mbox{\boldmath$\theta$}^n)^T J^n(\mbox{\boldmath$\phi$}^n) (\mbox{\boldmath$\psi$}^n - \mbox{\boldmath$\theta$}^n) \\ \label{decomp_likelihood}
&=& L^n (\mbox{\boldmath$\theta$}^n) + \frac{1}{\sqrt{n}}{\bf s}^T U^n(\mbox{\boldmath$\theta$}^n) - \frac{1}{2n} {\bf s}^T J^n(\mbox{\boldmath$\phi$}^n) {\bf s} ,
\end{eqnarray} where $\mbox{\boldmath$\phi$}^n =  (1-\delta^n) \mbox{\boldmath$\theta$}^n + \delta^n\mbox{\boldmath$\psi$}^n$, $0 < \delta^n < 1$ and $\delta^n$ is random. As $\delta^n$ is a function of the observed data ${\bf Y}^n$ and $0 < \delta^n  < 1$, we conclude that $\mbox{\boldmath$\phi$}^n \in R_c (n , {\bm \theta})$. As a consequence, we obtain that $\mbox{\boldmath$\phi$}^n \rightarrow \mbox{\boldmath$\theta$}$ as $n \rightarrow \infty$.

\begin{theorem} \label{Score_limit} Let $\{{\bm \theta}^n : {\bm \theta}^n \in B_c(n , {\bm \theta}), n \geq 1\}$ be a sequence of parameters and let $\{{\bm h}^n : {\bm h}^n \in R_c(n , {\bm \theta}), n \geq 1\}$ be a sequence of random vectors. Then, we have that

\[
\left(\frac{1}{\sqrt{n}}  U^n (\mbox{\boldmath$\theta$}^n) , \frac{1}{n} J^n ({\bm h}^n) \right)  ~ \Rightarrow_u ~ \left( H (\tilde{\mbox{\boldmath$\theta$}}), W(\tilde{\mbox{\boldmath$\theta$}})  \right)
\] where $H (\tilde{\mbox{\boldmath$\theta$}}) = ({\bm 0}_m^T , \left((\tilde{W}(\tilde{\mbox{\boldmath$\theta$}}))^{1/2} {\bf z}\right)^T)^T$ such that

\begin{itemize}
  \item ${\bm 0}_m $ is the null vector of dimension $m$;
  \item ${\bf z}$ is a standard normal random vector on $\mathbb{R}^{2(p-1)}$, independent of the random matrix $\tilde{W}(\tilde{\mbox{\boldmath$\theta$}})$.
\end{itemize}
Furthermore, the random matrix $\tilde{W}(\tilde{\mbox{\boldmath$\theta$}})$ is positive definite with probability one and it depends only on the bias components $\tilde{\mbox{\boldmath$\theta$}}$ and the unobserved variables $(x_1, \cdots , x_m)$.
\end{theorem}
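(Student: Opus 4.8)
The plan is to establish the joint uniform weak convergence by combining the already-proven convergence of the observed information matrix (Lemma~\ref{conv_uniformly_weakly}) with a central-limit argument for the score vector, and then to glue the two pieces together with an independence and positive-definiteness argument. The key observation is that the limit pair $(H(\tilde{\mbox{\boldmath$\theta$}}), W(\tilde{\mbox{\boldmath$\theta$}}))$ has a conditional structure: given the unobserved variables $(x_1, \cdots , x_m)$, the matrix $W(\tilde{\mbox{\boldmath$\theta$}})$ is fixed, and $H(\tilde{\mbox{\boldmath$\theta$}})$ is Gaussian with that conditional covariance. So the whole statement is really a \emph{conditional} CLT followed by mixing over the law $\mathbb{P}_{(x_1, \cdots , x_m)}$.

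First I would treat the score. The components of $\frac{1}{\sqrt{n}} U^n(\mbox{\boldmath$\theta$}^n)$ split into the block associated with $\mu_{x_j}$ and the bias block $\frac{1}{\sqrt{n}}\tilde{U}^n(\mbox{\boldmath$\theta$}^n)$. For the first block, equation~(\ref{null_inform_VV}) already gives convergence in probability to ${\bm 0}_m$, which matches the first coordinate of $H(\tilde{\mbox{\boldmath$\theta$}})$. For the bias block, I would condition on $(x_1, \cdots , x_m)$: under the law $\mathbb{P}_{\mbox{\boldmath$\theta$}^n , (x_1, \cdots , x_m)}$ and conditionally on the $x_j$'s, the data ${\bf Y}_{ij}$ are genuinely independent across $i$ and $j$ with finite variances, so the score becomes a sum of independent (conditionally Gaussian) contributions. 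A Lindeberg or Lyapunov CLT then yields that $\frac{1}{\sqrt{n}}\tilde{U}^n(\mbox{\boldmath$\theta$}^n)$ converges, conditionally on $(x_1, \cdots , x_m)$, to a centered normal with covariance equal to the conditional limit of the bias block of $\frac{1}{n}J^n$, which is exactly $\tilde{W}(\tilde{\mbox{\boldmath$\theta$}})$ by the second Bartlett identity. This identifies the conditional limit of the bias score as $(\tilde{W}(\tilde{\mbox{\boldmath$\theta$}}))^{1/2}{\bf z}$ with ${\bf z}$ standard normal. The slight subtlety here is that the CLT must be shown \emph{uniformly} over $\mbox{\boldmath$\theta$}^n \to \mbox{\boldmath$\theta$}$; I would verify the Lyapunov condition with bounds uniform over $B_c(n,\mbox{\boldmath$\theta$})$, invoking the continuity of $\tilde{W}(\tilde{\mbox{\boldmath$\theta$}})$ in $\tilde{\mbox{\boldmath$\theta$}}$ that is guaranteed by Lemma~\ref{conv_uniformly_weakly}.

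Next I would assemble the joint convergence. Since ${\bf z}$ arises from the conditional CLT, it is by construction independent of $\tilde{W}(\tilde{\mbox{\boldmath$\theta$}})$, which is $(x_1, \cdots , x_m)$-measurable; this gives the asserted independence of ${\bf z}$ and $\tilde{W}(\tilde{\mbox{\boldmath$\theta$}})$. For the matrix coordinate, Lemma~\ref{conv_uniformly_weakly} already delivers $\frac{1}{n}J^n({\bm h}^n) \Rightarrow_u W(\tilde{\mbox{\boldmath$\theta$}})$; the same conditioning argument shows that score and information converge \emph{jointly}, because once we condition on $(x_1, \cdots , x_m)$ the matrix limit is deterministic and the only remaining randomness is the asymptotically Gaussian score. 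I would then integrate the conditional characteristic function against $\mathbb{P}_{(x_1, \cdots , x_m)}$ to obtain the unconditional (mixture) law of $(H(\tilde{\mbox{\boldmath$\theta$}}), W(\tilde{\mbox{\boldmath$\theta$}}))$, and use the uniform-in-$\mbox{\boldmath$\theta$}$ estimates above together with the uniform weak convergence framework of \cite{sweeting1980uniform} to upgrade convergence in distribution to $\Rightarrow_u$.

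Finally, the positive-definiteness claim: $\tilde{W}(\tilde{\mbox{\boldmath$\theta$}})$ is the almost-sure limit of the bias block of $\frac{1}{n}J^n$, which is the conditional Fisher information for $(\alpha_i, \beta_i)$ given the $x_j$'s. I would argue it is positive definite with probability one by showing its determinant vanishes only on a $\mathbb{P}_{(x_1, \cdots , x_m)}$-null event—concretely, degeneracy would force a linear dependence among the $x_j$ values (e.g.\ all $x_j$ equal), which has probability zero whenever at least two levels have distinct true values and the variances $\sigma^2_{x_j}$ are positive. The dependence of $\tilde{W}(\tilde{\mbox{\boldmath$\theta$}})$ only on $\tilde{\mbox{\boldmath$\theta$}}$ and $(x_1, \cdots , x_m)$ was already recorded after Lemma~\ref{conv_obs}. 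The main obstacle I anticipate is making the conditional CLT \emph{uniform} in the parameter sequence $\mbox{\boldmath$\theta$}^n$ while simultaneously handling the random evaluation point ${\bm h}^n \in R_c(n,\mbox{\boldmath$\theta$})$ in the information matrix; the device that resolves this is precisely the smoothness/uniform-convergence machinery of \cite{sweeting1980uniform}, reducing the random-argument matrix to the fixed limit $W(\tilde{\mbox{\boldmath$\theta$}})$ via Lemma~\ref{conv_obs}.
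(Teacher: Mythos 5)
Your architecture---condition on $(x_1,\dots,x_m)$, prove a conditional CLT for the bias block of the score, and then mix over $\mathbb{P}_{(x_1,\dots,x_m)}$---is a genuinely different route from the paper's. The paper never invokes a CLT for the score at all: it exponentiates the second-order Taylor expansion (\ref{decomp_likelihood}) to obtain the likelihood identity (\ref{log_vero_eq}), multiplies through by a bounded test function of $\frac{1}{n}J^n({\bm \phi}^n)$, integrates, and reads off the joint moment generating function of the limit pair from the convergence of $\frac{1}{n}J^n$ alone (Lemma \ref{conv_uniformly_weakly}); the factor $\exp\left(\frac{1}{2}{\bf s}^T W(\tilde{\mbox{\boldmath$\theta$}}){\bf s}\right)$ that emerges is then recognized as $\mathbb{E}\left[\exp({\bf s}^T H(\tilde{\mbox{\boldmath$\theta$}}))\mid W(\tilde{\mbox{\boldmath$\theta$}})\right]$, which is precisely where the mixture representation $H=({\bm 0}_m^T,((\tilde W)^{1/2}{\bf z})^T)^T$ and the independence of ${\bf z}$ from $\tilde W$ come from. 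The point of the Weiss--Sweeting device is to obtain asymptotic mixed normality of the score as a by-product of the information-matrix convergence, bypassing exactly the conditional CLT you propose. The random evaluation point ${\bm h}^n$ is handled at the very end in both arguments, via Lemma \ref{conv_obs}, so you agree there.

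The genuine gap is your identification of the conditional limit covariance of $n^{-1/2}\tilde U^n(\mbox{\boldmath$\theta$}^n)$ with $\tilde W(\tilde{\mbox{\boldmath$\theta$}})$ ``by the second Bartlett identity.'' The Bartlett identities hold under the model law $\mathbb{P}_{\mbox{\boldmath$\theta$}}$, in which the $x_j$ are integrated out; conditionally on a fixed realization $(x_1,\dots,x_m)$ the data are no longer distributed according to the marginal model whose likelihood defines $U^n$ and $J^n$, so neither $\mathbb{E}[U^n\mid x]=0$ nor $\mathrm{Var}(U^n\mid x)=\mathbb{E}[J^n\mid x]$ is automatic---indeed the failure of ergodicity here is exactly the mismatch between conditional and model expectations. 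You would need to verify by direct computation (i) that $n^{-1/2}\,\mathbb{E}[\tilde U^n(\mbox{\boldmath$\theta$}^n)\mid x]\rightarrow 0$ (true, but only because the conditional bias turns out to be $O(1)$, which is not obvious a priori), and (ii) that $n^{-1}\mathrm{Var}(\tilde U^n(\mbox{\boldmath$\theta$}^n)\mid x)\rightarrow \tilde W(\tilde{\mbox{\boldmath$\theta$}})$ uniformly over $\mbox{\boldmath$\theta$}^n\in B_c(n,\mbox{\boldmath$\theta$})$; since the $\beta_i$-components of the score are quadratic, not linear, in the data (they involve $M_j^n D_{ij}^n$ and $(M_j^n)^2$), item (ii) is a real computation with cross terms, not a one-line identity. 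Until it is done, the limit covariance could a priori differ from $\tilde W$, and the theorem's conclusion would not follow. Your degeneracy argument for the almost-sure positive definiteness of $\tilde W$ (singularity forcing the $x_j$ to satisfy an algebraic relation of probability zero) is a reasonable sketch---and, to your credit, more than the paper's own proof offers, since it asserts that claim without argument.
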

\begin{proof}

Taking exponentials in equation (\ref{decomp_likelihood}) and rearranging gives

\begin{equation} \label{log_vero_eq}
  \exp\left[\frac{1}{2n} {\bf s}^T J^n(\mbox{\boldmath$\phi$}^n) {\bf s}  \right]   f_{{\bf Y}^n}({\bf y}^n ,\mbox{\boldmath$\psi$}^n)=  \exp \left[ \frac{1}{\sqrt{n}}{\bf s}^T U^n(\mbox{\boldmath$\theta$}^n)\right]  f_{{\bf Y}^n}({\bf y}^n ,\mbox{\boldmath$\theta$}^n).
\end{equation} Let $0 < \epsilon < 1$ and choose a positive constant $v$ such that $\mathbb{P}_{(x_1, \cdots , x_m)} [ \parallel W(\tilde{\mbox{\boldmath$\theta$}}) \parallel \geq v ] \leq \epsilon$. As a consequence of Lemma \ref{conv_uniformly_weakly}, we have that $ (1/n) J^n(\mbox{\boldmath$\phi$}^n)$ converges uniformly in distribution to $W(\tilde{\mbox{\boldmath$\theta$}})$  under the families of probabilities $\{\mathbb{P}_{\mbox{\boldmath$\psi^n$}} : n \geq 1 \}$ and  $\{ \mathbb{P}_{{\bm \theta}^n}: n \geq 1 \}$. Since $\{A \in M^{m+2(p-1)}: \parallel A \parallel < v \}$ is a $\mathbb{G}_{\mbox{\boldmath$\theta$}}$-continuity set, it follows from Lemmas 1 and 3 in \cite{sweeting1980uniform} that

\begin{equation} \label{conv_set_weak}
\mathbb{P}_{\mbox{\boldmath$\theta$}^n} \left[ \parallel \frac{1}{n} J^n(\mbox{\boldmath$\phi$}^n) \parallel < v  \right] \rightarrow \mathbb{P}_{(x_1, \cdots , x_m)} \left[ \parallel W (\tilde{\mbox{\boldmath$\theta$}}) \parallel < v  \right].
\end{equation}

Let $\mathbb{Q}_{\mbox{\boldmath$\theta$}^n}$ be the probability $\mathbb{P}_{\mbox{\boldmath$\theta$}^n}$ conditional on $\{\parallel \frac{1}{n} J^n(\mbox{\boldmath$\phi$}^n) \parallel < v \}$. In this case, the finite dimensional component of $\mathbb{Q}_{\mbox{\boldmath$\theta$}^n}$ has the following density

\[
q_{{\bf Y}^n} ({\bf y }^n , \mbox{\boldmath$\theta$}^n) =
\left\{
  \begin{array}{ll}
    \frac{f_{{\bf Y}^n} ({\bf y }^n , \mbox{\boldmath$\theta$}^n)}{\mathbb{P}_{\mbox{\boldmath$\theta$}^n} \left[ \parallel \frac{1}{n} J^n(\mbox{\boldmath$\phi$}^n) \parallel < v  \right]}, & \parallel \frac{1}{n} J^n(\mbox{\boldmath$\phi$}^n) \parallel < v  \\ \\
    0, & otherwise.
  \end{array}
\right.
\]

Let $g$ be a bounded function on $M^{m + 2(p-1)}$, continuous on $\parallel A \parallel < v$ and with $g(A)=0$ for every $\parallel A \parallel \geq v$. Let $\mathbb{E}_{\mbox{\boldmath$\theta$}^n}^\star$ denotes the expectation with respect to the probability $\mathbb{Q}_{\mbox{\boldmath$\theta$}^n}$ and $\mathbb{E}_{(x_1, \cdots , x_m)}^\star$ denotes the expectation with respect to the probability $\mathbb{P}_{(x_1, \cdots , x_m)}$ conditional on the set $\{ \parallel W(\tilde{{\bm \theta}}) \parallel < v\}$. Multiplying equation (\ref{log_vero_eq}) through by $g((1/n)J^n({\bm \phi}^n))$ and integrating with respect to the Lebesgue measure over the set $\{ \parallel (1/n)J^n({\bm \phi}^n) \parallel < v\}$ yields

\begin{eqnarray*}
\mathbb{E}_{\mbox{\boldmath$\theta$}^n}^\star \left[g \left(\frac{1}{n} J^n({\bm \phi}^n) \right) \exp \left( \frac{1}{\sqrt{n}}{\bf s}^T U^n(\mbox{\boldmath$\theta$}^n)\right)  \right] &=& \frac{ \mathbb{E}_{\mbox{\boldmath$\psi$}^n} \left[g \left((\frac{1}{n} J^n({\bm \phi}^n) \right) \exp\left(\frac{1}{2n} {\bf s}^T J^n(\mbox{\boldmath$\phi$}^n) {\bf s}  \right)  \right]}{\mathbb{P}_{\mbox{\boldmath$\theta$}^n} \left[ \parallel \frac{1}{n} J^n(\mbox{\boldmath$\phi$}^n) \parallel < v  \right]} \\ \\
& \rightarrow & \frac{\mathbb{E}_{(x_1, \cdots , x_m)} \left[ g(W(\tilde{\mbox{\boldmath$\theta$}})) \exp \left( \frac{1}{2} {\bf s}^T W(\tilde{\mbox{\boldmath$\theta$}}) {\bf s}  \right) \right]}{\mathbb{P}_{(x_1, \cdots , x_m)} \left[ \parallel W(\tilde{\mbox{\boldmath$\theta$}}) \parallel < v  \right]} \\ \\
&=& \mathbb{E}_{(x_1, \cdots , x_m)}^\star \left[g(W(\tilde{\mbox{\boldmath$\theta$}})) \exp \left( \frac{1}{2} {\bf s}^T W(\tilde{\mbox{\boldmath$\theta$}}) {\bf s}  \right) \right],
\end{eqnarray*} as a consequence of equation (\ref{conv_set_weak}), Lemma \ref{conv_uniformly_weakly} and the fact that $ g(A) \exp ( (1/2) {\bf s}^T A {\bf s}) $ is a bounded $\mathbb{G}_{{\bm \theta}}$-continuous function (see, \cite{Billingsley1968}, Theorem 5.2).

We decompose the vector ${\bf s}$ in two components ${\bf s}= ({\bf s}_1^T , {\bf s}_2^T)^T$ where ${\bf s}_1=(s_1, \cdots , s_m)^T$ and ${\bf s}_2=(s_{m+1} , \cdots , s_{m+2(p-1)})^T$. As every component of $W(\tilde{{\bm \theta}})$ associated with $\mu_{x_j}$ is null, we obtain that

\[
\mathbb{E}_{(x_1, \cdots , x_m)}^\star \left[g(W(\tilde{\mbox{\boldmath$\theta$}})) \exp \left( \frac{1}{2} {\bf s}^T W(\tilde{\mbox{\boldmath$\theta$}}) {\bf s}  \right) \right] = \mathbb{E}_{(x_1, \cdots , x_m)}^\star \left[g(W(\tilde{\mbox{\boldmath$\theta$}})) \exp \left({\bf s}^T_1 {\bf 0}_m + \frac{1}{2} {\bf s}^T_2 \tilde{W}(\tilde{\mbox{\boldmath$\theta$}}) {\bf s}_2  \right) \right] =
\]

\[
\mathbb{E}_{(x_1, \cdots , x_m)}^\star \left[g(W(\tilde{\mbox{\boldmath$\theta$}})) \exp \left( {\bf s}^T H(\tilde{\mbox{\boldmath$\theta$}})  \right)  \right],
\] such that  $H(\tilde{\mbox{\boldmath$\theta$}})=({\bf 0}_m, (\tilde{W}(\tilde{{\bm \theta}}))^{1/2} {\bf z})$ where ${\bf 0}_m$ is the null vector of dimension $m$ and ${\bf z}$ is a standard normal random vector on $\mathbb{R}^{2(p-1)}$, independent of the random matrix $\tilde{W}(\tilde{\mbox{\boldmath$\theta$}})$. By the uniqueness of the moment generating function and the weak compactness theorem, we conclude that

\[
\left(\frac{1}{n} J^n({\bm \phi}^n) , \frac{1}{\sqrt{n}} U^n ({\bm \theta}^n) \right) \Rightarrow \left( W(\tilde{{\bm \theta}}) , H(\tilde{\mbox{\boldmath$\theta$}}) \right) 1\!\!1_{ \{\parallel W(\tilde{{\bm \theta}}) \parallel < v  \}}
\] under the family $\{ \mathbb{Q}_{{\bm \theta}^n} : n \geq 1\}$ of probabilities. As $v$ is arbitrary, it follows that

\[
\left(\frac{1}{n} J^n({\bm \phi}^n) , \frac{1}{\sqrt{n}} U^n ({\bm \theta}^n) \right) \Rightarrow \left( W(\tilde{{\bm \theta}}) , H(\tilde{\mbox{\boldmath$\theta$}}) \right)
\] under the family $\{ \mathbb{P}_{{\bm \theta}^n} : n \geq 1\}$ of probabilities. By applying Lemma \ref{conv_obs}, we conclude that  $$\parallel \frac{1}{n} J^n({\bm \phi}^n) - \frac{1}{n} J^n({\bm h}^n) \parallel \rightarrow_u 0$$ uniformly in probability. Hence, we obtain that

\[
\left(\frac{1}{n} J^n({\bm h}^n) , \frac{1}{\sqrt{n}} U^n ({\bm \theta}^n) \right) \Rightarrow_u \left( W(\tilde{{\bm \theta}}) , H(\tilde{\mbox{\boldmath$\theta$}}) \right).
\]

\end{proof}

In the sequel, we will show the  asymptotic normality of the MLE regarding the bias parameters $\tilde{{\bm \theta}}$.  For every vector  ${\bf s}= ({\bf s}_1 , {\bf s}_2)
\in \mathbb{R}^{m+2(p-1)}$ such that ${\bf s}_1 =(s_1, \cdots ,s_m)$ and ${\bf s}_2=(s_{m+1} , \cdots , s_{m+2(p-1)})$, it follows from equation  (\ref{decomp_likelihood}) that

\begin{eqnarray*}
L^n (\mbox{\boldmath$\psi$}^n) - L^n (\mbox{\boldmath$\theta$}) &=&  \frac{1}{\sqrt{n}}{\bf s}^T U^n(\mbox{\boldmath$\theta$}) - \frac{1}{2n} {\bf s}^T J^n(\mbox{\boldmath$\phi$}^n) {\bf s} = \frac{1}{\sqrt{n}}{\bf s}^T_2 \tilde{U}^n (\mbox{\boldmath$\theta$}) -   \frac{1}{2n} {\bf s}^T_2 \tilde{J}^n(\mbox{\boldmath$\theta$}) {\bf s}_2 \\
&+&  \frac{1}{\sqrt{n}}{\bf s}^T_1 U^n_{\mu_{x_j}} (\mbox{\boldmath$\theta$}) -\frac{1}{2n} \sum_{i=1}^m s_i^2 J^n_{ii}(\mbox{\boldmath$\phi$}^n) - \frac{1}{2n} \sum_{i=1}^m \sum_{j=m+1}^{m+2(p-1)} s_i s_j J^n_{ij}(\mbox{\boldmath$\phi$}^n) \\
&-&  \frac{1}{2n} \sum_{i=m+1}^{m+2(p-1)} \sum_{j=1}^{m} s_i s_j J^n_{ij}(\mbox{\boldmath$\phi$}^n) + \frac{1}{2n} {\bf s}^T_2  \left[\tilde{J}^n(\mbox{\boldmath$\theta$}) - \tilde{J}^n(\mbox{\boldmath$\phi$}^n) \right] {\bf s}_2,
\end{eqnarray*} where ${\bm \theta} \in \mathbb{R}^{m+2(p-1)}$, $\mbox{\boldmath$\psi$}^n = \mbox{\boldmath$\theta$} + \frac{1}{\sqrt{n}}{\bf s}$  and $\mbox{\boldmath$\phi$}^n = (1-\delta^n) \mbox{\boldmath$\theta$} + \delta^n \mbox{\boldmath$\psi$}^n$, $0 < \delta^n < 1$ such that $\delta^n$ is random. As a consequence of Lemma \ref{conv_obs} and equation (\ref{null_inform_VV}), we arrive at the following lemma.

\begin{lemma} We have that

\begin{equation}  \label{cons_mle1}
\sup_{{\bf s} \in \mathbb{R}^{m+2(p-1)}} ~ \left[ L^n (\mbox{\boldmath$\psi$}^n) - L^n (\mbox{\boldmath$\theta$})\right] = \sup_{{\bf s}_2 \in \mathbb{R}^{2(p-1)}} \left[ \frac{1}{\sqrt{n}}{\bf s}^T_2 \tilde{U}^n (\mbox{\boldmath$\theta$}) -   \frac{1}{2n} {\bf s}^T_2 \tilde{J}^n(\mbox{\boldmath$\theta$}) {\bf s}_2  \right] + o_p (1),
\end{equation} for ${\bm \theta} \in \mathbb{R}^{m+2(p-1)}$.
\end{lemma}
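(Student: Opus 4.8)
The plan is to start from the exact second-order expansion displayed immediately above the statement, which I abbreviate as
\[
L^n(\bm{\psi}^n) - L^n(\bm{\theta}) = \underbrace{\frac{1}{\sqrt{n}}{\bf s}^T_2 \tilde{U}^n(\bm{\theta}) - \frac{1}{2n}{\bf s}^T_2 \tilde{J}^n(\bm{\theta}){\bf s}_2}_{=:~\Phi_n({\bf s}_2)} + R_n({\bf s}),
\]
where $R_n({\bf s})$ collects the remaining five terms: the score term $\frac{1}{\sqrt n}{\bf s}_1^T U^n_{\mu_{x_j}}(\bm{\theta})$, the $\mu$--$\mu$ diagonal term, the two $\mu$--bias cross sums, and the Hessian-difference term $\frac{1}{2n}{\bf s}_2^T[\tilde{J}^n(\bm{\theta}) - \tilde{J}^n(\bm{\phi}^n)]{\bf s}_2$. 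I would show that $R_n({\bf s})$ is $o_p(1)$ uniformly over the localized range of ${\bf s}$ dictated by the sets $B_c(n,\bm{\theta})$ and $R_c(n,\bm{\theta})$, that both suprema are attained on a bounded set, and then match the maximizers.

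First I would localize the right-hand supremum. By Theorem \ref{Score_limit}, $\frac{1}{n}\tilde{J}^n(\bm{\theta})$ converges uniformly in distribution to the positive definite matrix $\tilde{W}(\tilde{\bm{\theta}})$, while $\frac{1}{\sqrt n}\tilde{U}^n(\bm{\theta})$ converges to $(\tilde{W}(\tilde{\bm{\theta}}))^{1/2}{\bf z}$. Hence, for large $n$, $\Phi_n$ is strictly concave in ${\bf s}_2$, with unique maximizer ${\bf s}_2^\star = (\frac{1}{n}\tilde{J}^n(\bm{\theta}))^{-1}\frac{1}{\sqrt n}\tilde{U}^n(\bm{\theta}) = O_p(1)$ and maximal value $\frac{1}{2}(\frac{1}{\sqrt n}\tilde{U}^n(\bm{\theta}))^T(\frac{1}{n}\tilde{J}^n(\bm{\theta}))^{-1}(\frac{1}{\sqrt n}\tilde{U}^n(\bm{\theta})) = O_p(1)$; in particular the right-hand supremum is, with probability tending to one, unchanged when ${\bf s}_2$ is confined to a fixed ball.

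The core estimate is the uniform control of $R_n$ on a bounded region $\{|{\bf s}|\le c\}$. Each term factors as a bounded power of $c$ times a coefficient tending to zero in probability. The term $\frac{1}{\sqrt n}{\bf s}_1^T U^n_{\mu_{x_j}}(\bm{\theta})$ is $o_p(1)$ by (\ref{null_inform_VV}). The diagonal term $\frac{1}{2n}\sum_{i\le m}s_i^2 J^n_{ii}(\bm{\phi}^n)$ and the two cross sums are $o_p(1)$ because, by Lemma \ref{conv_obs}, the entries $\frac{1}{n}J^n_{ij}(\bm{\phi}^n)$ with $i\le m$ converge to the corresponding entries of $W(\tilde{\bm{\theta}})$, all of which vanish on rows and columns indexed by $\mu_{x_j}$. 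The Hessian-difference term is $o_p(1)$ since both $\frac{1}{n}\tilde{J}^n(\bm{\theta})$ and $\frac{1}{n}\tilde{J}^n(\bm{\phi}^n)$ converge uniformly to the same limit $\tilde{W}(\tilde{\bm{\theta}})$. These bounds are uniform on $\{|{\bf s}|\le c\}$, giving $\sup_{|{\bf s}|\le c}|R_n({\bf s})| = o_p(1)$. For the lower bound I would evaluate the left-hand expression at $({\bf 0}_m,{\bf s}_2^\star)$: at ${\bf s}_1 = {\bf 0}$ all of $R_n$ except the Hessian-difference term vanishes identically, and that term at ${\bf s}_2^\star = O_p(1)$ is $o_p(1)$, so the left side is $\ge \sup_{{\bf s}_2}\Phi_n({\bf s}_2) + o_p(1)$. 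The uniform remainder bound yields the matching upper bound $\le \sup_{{\bf s}_2}\Phi_n({\bf s}_2) + o_p(1)$, and (\ref{cons_mle1}) follows.

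I expect the genuine obstacle to be the ${\bf s}_1$ (that is, $\mu_{x_j}$) direction. Since every entry of $W(\tilde{\bm{\theta}})$ involving $\mu_{x_j}$ is null, the leading quadratic supplies no curvature of order $n$ in ${\bf s}_1$; the only damping is the $O(1)$ curvature carried by $J^n_{ii}(\bm{\phi}^n)$ itself, which converges to $1/\sxj$ rather than to zero. Consequently the estimate $\frac{1}{2n}\sum_{i\le m}s_i^2 J^n_{ii}(\bm{\phi}^n) = o_p(1)$ is available only while ${\bf s}_1$ remains bounded, and the displayed decomposition, together with Lemma \ref{conv_obs} and the vanishing score (\ref{null_inform_VV}), is used precisely in the localized regime $B_c(n,\bm{\theta})$, $R_c(n,\bm{\theta})$ underpinning the section. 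The delicate point is therefore to justify that the supremum is effectively the local one, so that ${\bf s}_1$ never escapes a bounded set; once that localization is secured, the term-by-term estimates above close the argument.
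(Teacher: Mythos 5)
Your decomposition and term-by-term estimates are exactly what the paper intends: the paper offers no proof beyond the sentence preceding the lemma ("As a consequence of Lemma \ref{conv_obs} and equation (\ref{null_inform_VV})\dots"), and your uniform control of the remainder over $\{|{\bf s}|\le c\}$ --- the vanishing of $n^{-1/2}U^n_{\mu_{x_j}}$, the vanishing of the entries of $\frac{1}{n}J^n(\bm{\phi}^n)$ in the $\mu$-rows and $\mu$-columns, and the coincidence of the limits of $\frac{1}{n}\tilde{J}^n(\bm{\theta})$ and $\frac{1}{n}\tilde{J}^n(\bm{\phi}^n)$ --- is precisely the content of those two citations spelled out. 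Your localization of the right-hand supremum and the lower bound obtained by evaluating at $({\bf 0}_m,{\bf s}_2^\star)$ are also correct.

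The obstacle you flag in the ${\bf s}_1$ direction is, however, not merely delicate: it cannot be removed, because the identity fails when the left-hand supremum ranges over all of $\mathbb{R}^{m+2(p-1)}$. From Appendix B, $J^n_{\mu_{x_j}\mu_{x_j}}(\bm{\phi}^n)=(a_j^n-1)/(\sigma^2_{x_j}a_j^n)\rightarrow 1/\sigma^2_{x_j}>0$, an $O(1)$ curvature, while $U^n_{\mu_{x_j}}(\bm{\theta})=M_j^n/a_j^n-\mu_{x_j}/\sigma^2_{x_j}\rightarrow(x_j-\mu_{x_j})/\sigma^2_{x_j}$, which is $O_p(1)$ and almost surely nonzero (only its $n^{-1/2}$ rescaling in (\ref{null_inform_VV}) vanishes). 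Taking $s_j\asymp\sqrt{n}$, i.e.\ shifting $\mu_{x_j}$ by a constant, the profile $\frac{s_j}{\sqrt{n}}U^n_{\mu_{x_j}}-\frac{s_j^2}{2n}J^n_{\mu_{x_j}\mu_{x_j}}(\bm{\phi}^n)$ attains a maximum asymptotic to $(U^n_{\mu_{x_j}})^2/(2J^n_{\mu_{x_j}\mu_{x_j}})\rightarrow(x_j-\mu_{x_j})^2/(2\sigma^2_{x_j})$, and the cross terms remain $O(n^{-1/2})$ there; hence the unrestricted left-hand supremum exceeds the right-hand side by a quantity converging in distribution to $\frac{1}{2}\sum_{j=1}^m(x_j-\mu_{x_j})^2/\sigma^2_{x_j}$, a $\frac{1}{2}\chi^2_m$ variable, which is not $o_p(1)$. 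The statement is true, and your argument closes, only if the outer supremum is restricted to $|{\bf s}|\le c$ (or at least $|{\bf s}_1|=o(\sqrt{n})$), consistent with how $B_c(n,\bm{\theta})$ and $R_c(n,\bm{\theta})$ are used throughout Section 3; as written the lemma needs that restriction, and your proposal correctly isolates why.
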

This Lemma is crucial to understand the behavior of the likelihood function with respect to the true value parameters $(\mu_{x_1}, \cdots , \mu_{x_m})$. For $n$ sufficiently large, the impact of the true values vanish. Moreover, the maximum with respect to ${\bf s}_2$ of the right side of equation (\ref{cons_mle1}) satisfies \begin{equation} \label{asymp_normality1}  \frac{1}{n} \tilde{J}^n(\mbox{\boldmath$\theta$}) \hat{{\bf s}}_2 = \frac{1}{\sqrt{n}} \tilde{U}^n (\mbox{\boldmath$\theta$}) + o_p(1).\end{equation}
By applying equation (\ref{cons_mle1}), for $n$ sufficiently large, we conclude that $({\bf s}_1^T, \hat{{\bf s}}_2^T)^T$  corresponds closely to the value of $\hat{{\bm s}}$ that maximazes $L^n (\mbox{\boldmath$\theta$} + n^{-1/2} ({\bf s}_1^T, {\bf s}_2^T)^T)$ independent of the vector ${\bf s}_1$.

The maximum of $L^n (\mbox{\boldmath$\theta$} + n^{-1/2} {\bf s})$, the MLE $\hat{{{\bm \theta}}}^n$, is given by
\[
\hat{{{\bm \theta}}}^n = \tilde{{\bm \theta}} + \frac{1}{\sqrt{n}} \hat{{\bf s}},  \quad \text{which~ gives} \]
\[
\hat{\tilde{{\bm \theta}}}^n = \tilde{{\bm \theta}} + \frac{1}{\sqrt{n}} \hat{{\bf s}}_2  \quad \text{and} \quad \sqrt{n} \left(  \hat{\tilde{{\bm \theta}}}^n - \tilde{{\bm \theta}} \right) = \hat{{\bf s}}_2,
\] where $\hat{\tilde{{\bm \theta}}}^n$ corresponds to the MLE of the bias parameters $\tilde{{\bm \theta}}$. As a consequence, we conclude that

\begin{equation} \label{asymp_normality2}
\left( \frac{1}{n} \tilde{J}^n(\mbox{\boldmath$\theta$}) \right) \sqrt{n} \left(  \hat{\tilde{{\bm \theta}}}^n - \tilde{{\bm \theta}} \right) =  \frac{1}{n} \tilde{J}^n(\mbox{\boldmath$\theta$}) \hat{{\bf s}}_2.
\end{equation} Summing up the results obtained from equations (\ref{asymp_normality1}) and (\ref{asymp_normality2}), we arrive at the following Theorem.

\begin{theorem} \label{asymp_normality_2} The MLE $\hat{\tilde{\mbox{\boldmath$\theta$}}}^n$ of $\tilde{\mbox{\boldmath$\theta$}}$ satisfies

\[
\frac{1}{\sqrt{n}} \tilde{U}^n (\mbox{\boldmath$\theta$}) - \left( \frac{1}{n} \tilde{J}^n(\mbox{\boldmath$\theta$}) \right) \sqrt{n} \left(  \hat{\tilde{{\bm \theta}}}^n - \tilde{{\bm \theta}} \right) \rightarrow_u 0,
\] uniformly in probability, where $\mbox{\boldmath$\theta$} = (\mu_{x_1} , \cdots , \mu_{x_m} , \tilde{\mbox{\boldmath$\theta$}}) \in \mathbb{R}^{m+2(p-1)}$.

\end{theorem}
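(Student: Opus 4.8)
The plan is to assemble the statement directly from the two first-order relations established immediately above, namely equations (\ref{asymp_normality1}) and (\ref{asymp_normality2}), and then to upgrade the residual $o_p(1)$ to uniform convergence in probability by invoking the uniformity already secured in Lemma \ref{conv_obs}, equation (\ref{null_inform_VV}) and Theorem \ref{Score_limit}.

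First I would recall that, by the Lemma yielding (\ref{cons_mle1}), maximizing $L^n({\bm \theta} + n^{-1/2}{\bf s})$ over all ${\bf s} \in \mathbb{R}^{m+2(p-1)}$ reduces, up to a uniform $o_p(1)$ remainder, to maximizing the quadratic form $\frac{1}{\sqrt{n}}{\bf s}_2^T \tilde{U}^n({\bm \theta}) - \frac{1}{2n}{\bf s}_2^T \tilde{J}^n({\bm \theta}){\bf s}_2$ in ${\bf s}_2$ alone. Since Theorem \ref{Score_limit} guarantees that $\tilde{W}(\tilde{{\bm \theta}})$ is positive definite with probability one and that $\frac{1}{n}\tilde{J}^n$ converges to it uniformly, the matrix $\frac{1}{n}\tilde{J}^n({\bm \theta})$ is eventually invertible, so the quadratic admits a unique maximizer $\hat{{\bf s}}_2$ whose first-order condition is exactly equation (\ref{asymp_normality1}). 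Identifying this maximizer with the MLE component through $\sqrt{n}(\hat{\tilde{{\bm \theta}}}^n - \tilde{{\bm \theta}}) = \hat{{\bf s}}_2$ then delivers equation (\ref{asymp_normality2}).

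With these in hand, the core of the argument is purely algebraic: substituting the right-hand side of (\ref{asymp_normality1}) into (\ref{asymp_normality2}) produces
\[
\left(\frac{1}{n}\tilde{J}^n({\bm \theta})\right)\sqrt{n}\left(\hat{\tilde{{\bm \theta}}}^n - \tilde{{\bm \theta}}\right) = \frac{1}{\sqrt{n}}\tilde{U}^n({\bm \theta}) + o_p(1),
\]
and transposing the left-hand term isolates $\frac{1}{\sqrt{n}}\tilde{U}^n({\bm \theta}) - \left(\frac{1}{n}\tilde{J}^n({\bm \theta})\right)\sqrt{n}(\hat{\tilde{{\bm \theta}}}^n - \tilde{{\bm \theta}})$ as precisely the negative of that residual term.

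The remaining, and genuinely delicate, step is to argue that this residual is not merely $o_p(1)$ but vanishes uniformly in probability, that is $\rightarrow_u 0$ in the Weiss--Sweeting sense: for every sequence ${\bm \theta}^n \in B_c(n,{\bm \theta})$ with ${\bm \theta}^n \to {\bm \theta}$, the convergence must hold in probability under the corresponding measures $\mathbb{P}_{{\bm \theta}^n}$. I expect this to be the main obstacle. It is handled by tracing each error contribution in the expansion preceding (\ref{cons_mle1}) back to a uniform statement: the bound (\ref{null_inform_VV}) annihilates the $\mu_{x_j}$-block of the score uniformly, the uniform convergence $\frac{1}{n}J^n({\bm \phi}^n) \rightarrow_u W(\tilde{{\bm \theta}})$ of Lemma \ref{conv_obs} controls the off-diagonal terms and the difference $\tilde{J}^n({\bm \theta}) - \tilde{J}^n({\bm \phi}^n)$, and the joint uniform weak convergence of Theorem \ref{Score_limit} certifies that the quadratic approximation to the log-likelihood is valid along every such sequence. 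Combining these, the residual in the displayed identity is $o_p(1)$ uniformly, which is exactly the assertion $\rightarrow_u 0$, completing the proof.
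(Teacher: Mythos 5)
Your proposal is correct and follows essentially the same route as the paper: the theorem is obtained by combining the first-order condition (\ref{asymp_normality1}) for the maximizer $\hat{{\bf s}}_2$ of the quadratic approximation in (\ref{cons_mle1}) with the identification (\ref{asymp_normality2}) of $\hat{{\bf s}}_2$ with $\sqrt{n}(\hat{\tilde{{\bm \theta}}}^n - \tilde{{\bm \theta}})$. Your extra paragraph tracing the uniformity of the $o_p(1)$ remainder back to Lemma \ref{conv_obs}, equation (\ref{null_inform_VV}) and Theorem \ref{Score_limit} supplies detail the paper leaves implicit, but it does not change the argument.
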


As a consequence Theorems \ref{Score_limit} and \ref{asymp_normality_2} and the continuous mapping theorem, we conclude that

\begin{equation} \label{TesteWald}
\left(\left[\frac{1}{n} \tilde{J}^n ({\bm \theta})  \right]^{1/2} \sqrt{n} \left(  \hat{\tilde{{\bm \theta}}}^n - \tilde{{\bm \theta}} \right) , \frac{1}{n} \tilde{J}^n ({\bm \theta})    \right) \Rightarrow_u \left( {\bf z} , \tilde{W} (\tilde{{\bm \theta}}) \right).
\end{equation} Equation (\ref{TesteWald}) and the continuous mapping Theorem yield

\begin{equation}  \label{fan_MLE}
\sqrt{n} \left( \hat{\tilde{{\bm \theta}}}^n - \tilde{{\bm \theta}} \right) \Rightarrow_u \left[\tilde{W} (\tilde{{\bm \theta}})  \right]^{-1/2} ~ {\bf z}.
\end{equation} From equation (\ref{fan_MLE}), we know that the asymptotic distribution of the MLE regarding the bias parameters $\tilde{{\bm \theta}}$ is not normal, because the matrix $\tilde{W} (\tilde{{\bm \theta}})$ is random.

\begin{corollary} \label{Consi_MLE} The MLE $\hat{\tilde{{\bm \theta}}}^n$ related to the bias parameters satisfies

\[
\left( \hat{\tilde{{\bm \theta}}}^n - \tilde{{\bm \theta}} \right) \rightarrow_u 0.
\]

\end{corollary}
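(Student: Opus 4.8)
The plan is to read consistency directly off the asymptotic law of the rescaled estimator established in equation (\ref{fan_MLE}), namely $\sqrt{n}(\hat{\tilde{{\bm \theta}}}^n - \tilde{{\bm \theta}}) \Rightarrow_u [\tilde{W}(\tilde{{\bm \theta}})]^{-1/2}{\bf z}$. Since by Theorem \ref{Score_limit} the matrix $\tilde{W}(\tilde{{\bm \theta}})$ is positive definite with probability one, it is invertible almost surely, so the limiting vector $[\tilde{W}(\tilde{{\bm \theta}})]^{-1/2}{\bf z}$ is finite almost surely and therefore a tight (proper) random vector. Writing $\hat{\tilde{{\bm \theta}}}^n - \tilde{{\bm \theta}} = \frac{1}{\sqrt{n}}\,\sqrt{n}(\hat{\tilde{{\bm \theta}}}^n - \tilde{{\bm \theta}})$, the factor $n^{-1/2}$ drives the difference to zero while the rescaled term stays bounded in probability. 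This is exactly the classical passage from $\sqrt{n}$-consistency to consistency, which I adapt to the uniform framework of \cite{sweeting1980uniform}.

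Concretely, I would first show that the family $\{\sqrt{n}(\hat{\tilde{{\bm \theta}}}^n - \tilde{{\bm \theta}})\}$ is uniformly tight under the probabilities $\{\mathbb{P}_{{\bm \psi}^n}\}$. Fix $\eta > 0$ and choose $K > 0$ so large that $\mathbb{P}_{(x_1, \cdots , x_m)}[\, |[\tilde{W}(\tilde{{\bm \theta}})]^{-1/2}{\bf z}| > K \,] < \eta$, with $K$ a continuity point of the law of the limit norm (possible since the limit is finite a.s.). Applying the uniform Portmanteau statement (\cite{sweeting1980uniform}, Lemmas 1 and 3) to the continuity set $\{|{\bf v}| \le K\}$ together with the uniform weak convergence in (\ref{fan_MLE}) gives $\mathbb{P}_{{\bm \psi}^n}[\, |\sqrt{n}(\hat{\tilde{{\bm \theta}}}^n - \tilde{{\bm \theta}})| > K \,] \to \mathbb{P}_{(x_1, \cdots , x_m)}[\, |[\tilde{W}(\tilde{{\bm \theta}})]^{-1/2}{\bf z}| > K \,]$ uniformly, so for all $n$ large the left-hand side is below $\eta$, uniformly along every sequence ${\bm \psi}^n \to {\bm \theta}$.

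The proof then closes by the scaling step. For arbitrary $\epsilon > 0$,
\[
\mathbb{P}_{{\bm \psi}^n}\left[\, |\hat{\tilde{{\bm \theta}}}^n - \tilde{{\bm \theta}}| \geq \epsilon \,\right] = \mathbb{P}_{{\bm \psi}^n}\left[\, |\sqrt{n}(\hat{\tilde{{\bm \theta}}}^n - \tilde{{\bm \theta}})| \geq \sqrt{n}\,\epsilon \,\right].
\]
With $K$ chosen as above, once $n$ is large enough that $\sqrt{n}\,\epsilon > K$, the right-hand side is bounded by $\mathbb{P}_{{\bm \psi}^n}[\, |\sqrt{n}(\hat{\tilde{{\bm \theta}}}^n - \tilde{{\bm \theta}})| > K \,] < \eta$. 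Since $\eta > 0$ is arbitrary, this probability tends to zero uniformly along every sequence ${\bm \psi}^n \to {\bm \theta}$, which is precisely the assertion $(\hat{\tilde{{\bm \theta}}}^n - \tilde{{\bm \theta}}) \rightarrow_u 0$ uniformly in probability.

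The algebraic content identifying the limiting law is already packaged in (\ref{fan_MLE}), so that is not where the difficulty lies. The only delicate point is keeping every estimate uniform over the families $\{\mathbb{P}_{{\bm \psi}^n}\}$, rather than along a single fixed probability; this is handled entirely by the uniform weak-convergence lemmas of \cite{sweeting1980uniform}, and the positive definiteness of $\tilde{W}(\tilde{{\bm \theta}})$ from Theorem \ref{Score_limit} is what guarantees the limit is tight and hence that the $n^{-1/2}$-to-consistency argument applies.
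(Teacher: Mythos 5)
Your proof is correct and matches the paper's intent: the paper states Corollary \ref{Consi_MLE} as an immediate consequence of equation (\ref{fan_MLE}), and the argument it relies on is exactly the one you spell out --- uniform tightness of $\sqrt{n}\bigl(\hat{\tilde{{\bm \theta}}}^n - \tilde{{\bm \theta}}\bigr)$ obtained from the uniform weak convergence to the proper limit $[\tilde{W}(\tilde{{\bm \theta}})]^{-1/2}{\bf z}$ (proper because $\tilde{W}(\tilde{{\bm \theta}})$ is a.s.\ positive definite by Theorem \ref{Score_limit}), followed by division by $\sqrt{n}$. Your write-up simply makes explicit the uniform-tightness step that the paper itself invokes only later (via Prohorov's theorem) when justifying ${\bm h}^n = \hat{{\bm \theta}}^n$ in the Wald statistics.
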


In the sequel, we will derive the usual Wald statistics to perform hypothesis testing about the bias parameters. In order to do it, it is necessary to derive Theorem \ref{Score_limit} with ${\bm h}^n  = \hat{{\bm \theta}}^n$. By applying Prohorov's Theorem , we know that the sequence $\{\sqrt{n} \left( \hat{\tilde{{\bm \theta}}}^n - \tilde{{\bm \theta}} \right) : n \geq 1\}$ is uniformly tight. Then, for each $\epsilon > 0$, there exists a constant $c > 0$ such that

\[
\mathbb{P}_{ {\bm \theta}^n , (x_1, \cdots , x_m)} \left[ \big| \sqrt{n} \left( \hat{\tilde{{\bm \theta}}}^n - \tilde{{\bm \theta}} \right) \big| > c  \right] < \epsilon, \quad n \geq 1.
\] As a consequence, with probability tending to one, $\hat{\tilde{{\bm \theta}}}^n \in R_c (n , \tilde{{\bm \theta}})$.

\begin{lemma} Let $\{{\bm \theta}^n : {\bm \theta}^n \in B_c(n , {\bm \theta}), n \geq 1\}$ be a sequence of parameters. Then, we have that
\[
\left(\frac{1}{\sqrt{n}}  U^n (\mbox{\boldmath$\theta$}^n) , \frac{1}{n} \tilde{J}^n (\hat{{\bm \theta}}^n) \right)  ~ \Rightarrow_u ~ \left( H (\tilde{\mbox{\boldmath$\theta$}}), \tilde{W}(\tilde{\mbox{\boldmath$\theta$}})  \right)
\]
\end{lemma}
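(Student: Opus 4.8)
The plan is to obtain this statement as the specialization of Theorem \ref{Score_limit} to the random sequence ${\bm h}^n = \hat{{\bm \theta}}^n$, followed by the deletion of the rows and columns indexed by $\mu_{x_1}, \cdots , \mu_{x_m}$. Two ingredients are combined. First, Theorem \ref{Score_limit} already yields, for every sequence ${\bm h}^n \in R_c(n , {\bm \theta})$, the joint uniform weak convergence $(\frac{1}{\sqrt n} U^n({\bm \theta}^n) , \frac{1}{n} J^n({\bm h}^n)) \Rightarrow_u (H(\tilde{{\bm \theta}}) , W(\tilde{{\bm \theta}}))$. Second, the map that deletes from a matrix in $M^{m+2(p-1)}$ the $m$ rows and columns associated with $\mu_{x_j}$ is a coordinate projection, hence continuous everywhere, and it sends $W(\tilde{{\bm \theta}})$ to $\tilde{W}(\tilde{{\bm \theta}})$ and $J^n$ to $\tilde{J}^n$. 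Applying the continuous mapping theorem for $\Rightarrow_u$ (as already done to derive equation (\ref{TesteWald})) would then give $(\frac{1}{\sqrt n} U^n({\bm \theta}^n) , \frac{1}{n} \tilde{J}^n({\bm h}^n)) \Rightarrow_u (H(\tilde{{\bm \theta}}) , \tilde{W}(\tilde{{\bm \theta}}))$, which is the assertion once the substitution ${\bm h}^n = \hat{{\bm \theta}}^n$ is justified.

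To legitimize that substitution I would use the tightness recorded just before the statement: by equation (\ref{fan_MLE}) and Prohorov's theorem the sequence $\{ \sqrt n (\hat{\tilde{{\bm \theta}}}^n - \tilde{{\bm \theta}}) : n \geq 1 \}$ is uniformly tight, so for each $\epsilon>0$ there is $c>0$ with $\hat{\tilde{{\bm \theta}}}^n \in R_c(n,\tilde{{\bm \theta}})$ on an event of probability at least $1-\epsilon$ for all $n$. I would then introduce the auxiliary vector $\bar{{\bm h}}^n$ obtained from $\hat{{\bm \theta}}^n$ by replacing its $\mu_{x_j}$-coordinates with the true values $\mu_{x_j}$: on the above event its bias block lies in $R_c(n,\tilde{{\bm \theta}})$ while its $\mu_{x_j}$-block equals ${\bm \theta}$ exactly, so $\bar{{\bm h}}^n \in R_c(n,{\bm \theta})$ with probability tending to one. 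Theorem \ref{Score_limit} together with the projection step then delivers $\frac{1}{n}\tilde{J}^n(\bar{{\bm h}}^n) \Rightarrow_u \tilde{W}(\tilde{{\bm \theta}})$, truncating the test functions $g$ outside $\{\parallel A \parallel < v\}$ and letting $\epsilon \downarrow 0$ and $v \uparrow \infty$ exactly as in the proof of Theorem \ref{Score_limit}.

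The genuine obstacle is that the full MLE $\hat{{\bm \theta}}^n$ itself does \emph{not} belong to $R_c(n,{\bm \theta})$: because of the nonergodicity and equation (\ref{null_inform_VV}) the estimators $\hat{\mu}_{x_j}$ are not $\sqrt n$-consistent (they converge to a nondegenerate limit rather than to $\mu_{x_j}$), so $\sqrt n\,|\hat{\mu}_{x_j}-\mu_{x_j}|$ diverges and Theorem \ref{Score_limit} cannot be applied to $\hat{{\bm \theta}}^n$ verbatim. The way out is the structural fact from Appendix B.2 used throughout Section 3: after normalization by $n$ the $\mu_{x_j}$-coordinates enter $\frac{1}{n}\tilde{J}^n$ only through lower-order terms, which is precisely why the limit $\tilde{W}(\tilde{{\bm \theta}})$ depends on $\tilde{{\bm \theta}}$ and $(x_1,\cdots,x_m)$ alone. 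The step I expect to require the most care is therefore verifying that $\frac{1}{n}[\tilde{J}^n(\hat{{\bm \theta}}^n) - \tilde{J}^n(\bar{{\bm h}}^n)] \rightarrow_u 0$ uniformly in probability even though the two evaluation points differ by $O(1)$ in their $\mu_{x_j}$-blocks; once this is established, Slutsky's argument lets me transfer the limit of $\frac{1}{n}\tilde{J}^n(\bar{{\bm h}}^n)$ to $\frac{1}{n}\tilde{J}^n(\hat{{\bm \theta}}^n)$. The score limit $H(\tilde{{\bm \theta}})$ and the positive definiteness of $\tilde{W}(\tilde{{\bm \theta}})$ with probability one are then inherited directly from Theorem \ref{Score_limit}, completing the proof.
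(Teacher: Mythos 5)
Your proposal follows essentially the same route as the paper: use the uniform tightness of $\sqrt{n}(\hat{\tilde{\bm\theta}}^n - \tilde{\bm\theta})$ from Prohorov's theorem to place the bias block of the evaluation point in $R_c(n,\tilde{\bm\theta})$ with probability tending to one, invoke Theorem \ref{Score_limit}, and dispose of the non-$\sqrt{n}$-consistent $\hat{\mu}_{x_j}$ coordinates by noting that $\tilde{W}(\tilde{\bm\theta})$ and the normalized information $\frac{1}{n}\tilde{J}^n$ are asymptotically insensitive to them. You are in fact more explicit than the paper about the one delicate step (that $\frac{1}{n}[\tilde{J}^n(\hat{\bm\theta}^n)-\tilde{J}^n(\bar{\bm h}^n)]\rightarrow_u 0$ despite the $O(1)$ discrepancy in the $\mu_{x_j}$ block, which holds because $\mu_{x_j}$ enters $\frac{1}{n}\tilde{J}^n$ only through $M_j^n/n$ at order $1/n$), a point the paper's own two-line proof glosses over.
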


\begin{proof} For any $(\mu_{x_1} , \cdots , \mu_{x_m}) \in \mathbb{R}^m$, consider $ {\bf h}^n = (\mu_{x_1} , \cdots , \mu_{x_m} , \tilde{\mbox{\boldmath$\theta$}})$  and $A^n = \{ \sqrt{n} \left( {\bf h}^n - {\bm \theta} \right) \big| \leq c\}$. It follows from Prohorov's Theorem that $1\!\!1_{A^n} \rightarrow_u 0$, for every positive constant $c$. Hence, as a consequence of Theorem \ref{Score_limit}

\[
\left(\frac{1}{\sqrt{n}}  U^n (\mbox{\boldmath$\theta$}^n) , \frac{1}{n} \tilde{J}^n ({\bm h}^n) \right)  ~ \Rightarrow_u ~ \left( H (\tilde{\mbox{\boldmath$\theta$}}), \tilde{W}(\tilde{\mbox{\boldmath$\theta$}})  \right).
\] As $\tilde{W}$ does not depend on $(\mu_{x_1} , \cdots , \mu_{x_m})$, the arrive at the result of the Lemma.

\end{proof}

 Thus, we arrive at the following Corollaries.

\begin{corollary} \label{asymp_normality_3} Conditional on $\tilde{J}^n (\hat{{\bm \theta}}^n)$, the asymptotic distribution of $\sqrt{n} \left( \hat{\tilde{{\bm \theta}}}^n - \tilde{{\bm \theta}} \right)$  is given by $$N_{2(p-1)} \left[ {\bm 0}_{2(p-1)} , \left( \frac{1}{n} \tilde{J}^n (\hat{{\bm \theta}}^n) \right)^{-1} \right].$$
\end{corollary}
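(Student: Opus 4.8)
The plan is to upgrade the joint convergence recorded in (\ref{TesteWald}) by evaluating the observed information at the MLE $\hat{{\bm \theta}}^n$ rather than at the true value ${\bm \theta}$, and then to read off the conditional statement from the asymptotic independence of the limiting standard normal vector ${\bf z}$ and the random matrix $\tilde{W}(\tilde{{\bm \theta}})$ guaranteed by Theorem \ref{Score_limit}.

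First I would invoke the previous Lemma, which gives the joint convergence $\left(\frac{1}{\sqrt{n}}U^n({\bm \theta}^n) , \frac{1}{n}\tilde{J}^n(\hat{{\bm \theta}}^n)\right) \Rightarrow_u \left(H(\tilde{{\bm \theta}}) , \tilde{W}(\tilde{{\bm \theta}})\right)$. Taking the constant sequence ${\bm \theta}^n = {\bm \theta}$ and restricting to the bias components (the nontrivial part of $H(\tilde{{\bm \theta}})$ being $(\tilde{W}(\tilde{{\bm \theta}}))^{1/2}{\bf z}$), this reads $\left(\frac{1}{\sqrt{n}}\tilde{U}^n({\bm \theta}) , \frac{1}{n}\tilde{J}^n(\hat{{\bm \theta}}^n)\right) \Rightarrow_u \left((\tilde{W}(\tilde{{\bm \theta}}))^{1/2}{\bf z} , \tilde{W}(\tilde{{\bm \theta}})\right)$, with ${\bf z}$ independent of $\tilde{W}(\tilde{{\bm \theta}})$. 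Combining this with Theorem \ref{asymp_normality_2}, namely $\frac{1}{\sqrt{n}}\tilde{U}^n({\bm \theta}) = \left(\frac{1}{n}\tilde{J}^n({\bm \theta})\right)\sqrt{n}\left(\hat{\tilde{{\bm \theta}}}^n - \tilde{{\bm \theta}}\right) + o_p(1)$, and using Lemma \ref{conv_obs} to replace $\frac{1}{n}\tilde{J}^n({\bm \theta})$ by $\frac{1}{n}\tilde{J}^n(\hat{{\bm \theta}}^n)$ (both converge uniformly in probability to $\tilde{W}(\tilde{{\bm \theta}})$), I can write
\[
\left[\frac{1}{n}\tilde{J}^n(\hat{{\bm \theta}}^n)\right]^{1/2}\sqrt{n}\left(\hat{\tilde{{\bm \theta}}}^n - \tilde{{\bm \theta}}\right) = \left[\frac{1}{n}\tilde{J}^n(\hat{{\bm \theta}}^n)\right]^{-1/2}\frac{1}{\sqrt{n}}\tilde{U}^n({\bm \theta}) + o_p(1).
\]
Since $\tilde{W}(\tilde{{\bm \theta}})$ is positive definite with probability one, the continuous mapping theorem then produces the refined joint convergence
\begin{equation} \label{refined_wald}
\left(\left[\frac{1}{n}\tilde{J}^n(\hat{{\bm \theta}}^n)\right]^{1/2}\sqrt{n}\left(\hat{\tilde{{\bm \theta}}}^n - \tilde{{\bm \theta}}\right) , \frac{1}{n}\tilde{J}^n(\hat{{\bm \theta}}^n)\right) \Rightarrow_u \left({\bf z} , \tilde{W}(\tilde{{\bm \theta}})\right),
\end{equation}
in which ${\bf z}$ is standard normal and, crucially, independent of $\tilde{W}(\tilde{{\bm \theta}})$.

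The conditional statement then follows from the asymptotic independence in (\ref{refined_wald}). For bounded continuous $f$ on $\mathbb{R}^{2(p-1)}$ and $g$ on $M^{2(p-1)}$, the limit factorizes as $\mathbb{E}[f({\bf z})\,g(\tilde{W}(\tilde{{\bm \theta}}))] = \mathbb{E}[f({\bf z})]\,\mathbb{E}[g(\tilde{W}(\tilde{{\bm \theta}}))]$, so the convergence of the studentized statistic $\left[\frac{1}{n}\tilde{J}^n(\hat{{\bm \theta}}^n)\right]^{1/2}\sqrt{n}\left(\hat{\tilde{{\bm \theta}}}^n - \tilde{{\bm \theta}}\right)$ to $N_{2(p-1)}({\bm 0}_{2(p-1)}, {\bf I})$ is mixing with respect to the $\sigma$-field generated by the limiting information $\tilde{W}(\tilde{{\bm \theta}})$. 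Consequently, conditionally on $\frac{1}{n}\tilde{J}^n(\hat{{\bm \theta}}^n)$, the unstudentized statistic $\sqrt{n}\left(\hat{\tilde{{\bm \theta}}}^n - \tilde{{\bm \theta}}\right) = \left[\frac{1}{n}\tilde{J}^n(\hat{{\bm \theta}}^n)\right]^{-1/2}\left\{\left[\frac{1}{n}\tilde{J}^n(\hat{{\bm \theta}}^n)\right]^{1/2}\sqrt{n}\left(\hat{\tilde{{\bm \theta}}}^n - \tilde{{\bm \theta}}\right)\right\}$ has asymptotic distribution $N_{2(p-1)}\left[{\bm 0}_{2(p-1)} , \left(\frac{1}{n}\tilde{J}^n(\hat{{\bm \theta}}^n)\right)^{-1}\right]$, as claimed.

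I expect the main obstacle to be this last step: rigorously justifying that joint weak convergence to a limit in which ${\bf z}$ and $\tilde{W}(\tilde{{\bm \theta}})$ are independent is what licenses conditioning on the random (nonergodic) limiting information. This is precisely the mixing-type convergence underlying the Weiss--Sweeting framework, and the independence of ${\bf z}$ and $\tilde{W}(\tilde{{\bm \theta}})$ furnished by Theorem \ref{Score_limit} is the property that makes the conditional normal approximation legitimate despite the randomness of the asymptotic Fisher information.
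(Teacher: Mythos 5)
Your proposal is correct and follows essentially the same route the paper intends: the paper gives no explicit proof, simply deducing the corollary from the preceding lemma (joint convergence of the score with $\tfrac{1}{n}\tilde{J}^n(\hat{{\bm\theta}}^n)$), Theorem \ref{asymp_normality_2}, and the independence of ${\bf z}$ and $\tilde{W}(\tilde{{\bm\theta}})$ in Theorem \ref{Score_limit} — exactly the ingredients you assemble. Your explicit appeal to mixing-type (stable) convergence to justify conditioning on the random limiting information is a faithful and welcome elaboration of the step the paper leaves implicit.
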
 Applying again equation (\ref{TesteWald}) and continuous mapping Theorem  we arrive at the Wald statistics.

\begin{corollary} \label{wald_test}
We have that

$$  \left(  \hat{\tilde{{\bm \theta}}}^n - \tilde{{\bm \theta}} \right)^T \left[\tilde{J}^n (\hat{{\bm \theta}}^n)  \right] \left(  \hat{\tilde{{\bm \theta}}}^n - \tilde{{\bm \theta}} \right) \Rightarrow_u  {\bf z}^T {\bf z}. $$
\end{corollary}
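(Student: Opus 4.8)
The plan is to rewrite the quadratic form as a squared Euclidean norm of a standardised vector and then to invoke the continuous mapping theorem in the uniform weak-convergence framework of \cite{sweeting1980uniform}. First I would record that $\frac{1}{n}\tilde{J}^n(\hat{{\bm \theta}}^n)$ is positive definite with probability tending to one: by the Lemma preceding Corollary \ref{asymp_normality_3} it converges uniformly in distribution to $\tilde{W}(\tilde{{\bm \theta}})$, which by Theorem \ref{Score_limit} is positive definite with probability one, so its symmetric square root $\left[\frac{1}{n}\tilde{J}^n(\hat{{\bm \theta}}^n)\right]^{1/2}$ is well defined on that event. Putting ${\bf V}_n := \left[\frac{1}{n}\tilde{J}^n(\hat{{\bm \theta}}^n)\right]^{1/2}\sqrt{n}\left(\hat{\tilde{{\bm \theta}}}^n - \tilde{{\bm \theta}}\right)$ and using $\tilde{J}^n(\hat{{\bm \theta}}^n) = n\cdot\frac{1}{n}\tilde{J}^n(\hat{{\bm \theta}}^n)$, the statistic on the left equals ${\bf V}_n^T {\bf V}_n$, so the task reduces to identifying the uniform weak limit of ${\bf V}_n$.

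Next I would establish the analogue of (\ref{TesteWald}) with the observed information evaluated at the MLE $\hat{{\bm \theta}}^n$ rather than at ${\bm \theta}$. The Lemma preceding Corollary \ref{asymp_normality_3} already gives $\frac{1}{n}\tilde{J}^n(\hat{{\bm \theta}}^n) \Rightarrow_u \tilde{W}(\tilde{{\bm \theta}})$ jointly with the score, while Lemma \ref{conv_obs} shows $\left\|\frac{1}{n}\tilde{J}^n(\hat{{\bm \theta}}^n) - \frac{1}{n}\tilde{J}^n({\bm \theta})\right\| \rightarrow_u 0$ in probability, since both $\hat{{\bm \theta}}^n \in R_c(n,{\bm \theta})$ (with probability tending to one, by the Prohorov argument preceding that Lemma) and ${\bm \theta} \in B_c(n,{\bm \theta})$ share the common limit. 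Combining this with Theorem \ref{asymp_normality_2} exactly as in the derivation of (\ref{TesteWald}) yields ${\bf V}_n \Rightarrow_u {\bf z}$. Equivalently, this is the content of Corollary \ref{asymp_normality_3}: conditional on $\tilde{J}^n(\hat{{\bm \theta}}^n)$ the vector $\sqrt{n}\left(\hat{\tilde{{\bm \theta}}}^n - \tilde{{\bm \theta}}\right)$ is asymptotically $N_{2(p-1)}\left({\bm 0}_{2(p-1)},\left(\frac{1}{n}\tilde{J}^n(\hat{{\bm \theta}}^n)\right)^{-1}\right)$, so its standardisation ${\bf V}_n$ is asymptotically $N_{2(p-1)}({\bm 0}_{2(p-1)},{\bf I})$ independently of the conditioning matrix.

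Finally, since the map ${\bf v}\mapsto{\bf v}^T{\bf v}$ on $\mathbb{R}^{2(p-1)}$ is continuous, the continuous mapping theorem in the uniform framework of \cite{sweeting1980uniform} gives ${\bf V}_n^T{\bf V}_n \Rightarrow_u {\bf z}^T{\bf z}$, which is the assertion. I expect the main obstacle to be the rigorous handling of the square-root and inverse-square-root maps together with the random evaluation point $\hat{{\bm \theta}}^n$: these maps are continuous only on the positive-definite cone, so one must work on the $\mathbb{G}_{\tilde{{\bm \theta}}}$-continuity sets $\{\parallel A\parallel < v\}$ and then let $v\to\infty$, precisely as in the proof of Theorem \ref{Score_limit}, in order to keep the convergence uniform in ${\bm \theta}$ and to justify substituting $\tilde{J}^n(\hat{{\bm \theta}}^n)$ for $\tilde{J}^n({\bm \theta})$.
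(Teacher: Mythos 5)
Your proposal is correct and follows essentially the same route as the paper: the paper obtains the result by combining equation (\ref{TesteWald}) (in the form with $\tilde{J}^n(\hat{{\bm \theta}}^n)$ supplied by the lemma preceding Corollary \ref{asymp_normality_3}) with the continuous mapping theorem applied to ${\bf v}\mapsto{\bf v}^T{\bf v}$, which is exactly your ${\bf V}_n^T{\bf V}_n$ decomposition. Your additional care with the positive-definite cone and the $\mathbb{G}_{\tilde{{\bm \theta}}}$-continuity sets simply makes explicit what the paper leaves implicit.
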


\section{Equivalence among Participant laboratories}
In this section, we will propose multiple hypothesis testing to assess the equivalence among the laboratories measurements with respect to the reference laboratory. Initially, we will test for the equivalence of all laboratories with respect to the reference laboratory,
\begin{equation} \label{eaal} H_0:\alpha_2=\cdots=\alpha_p=0 \quad \text{and}  \quad \beta_2=\cdots=\beta_p=1 .\end{equation} To test hypothesis (\ref{eaal}), we may apply the  Wald statistics as established in Corollary \ref{wald_test}, i.e.

	\begin{equation}\label{eqWald11}
	Q_w=\left(  \hat{\tilde{{\bm \theta}}}^n - \tilde{{\bm \theta}}_0 \right)^T \left[\tilde{J}^n (\hat{{\bm \theta}}^n)  \right] \left(  \hat{\tilde{{\bm \theta}}}^n - \tilde{{\bm \theta}}_0 \right),
	\end{equation}
with $\tilde{{\bm \theta}}_0=(\bm{0}_{(p-1)}^T,\bm{1}_{(p-1)}^T)^T$ for hypothesis defined in (\ref{eaal}). Under the conditions established in Corollary \ref{wald_test}, $Q_w$ as indicated above has an asymptotic $ \chi^2_{2(p-1)} $ distribution.

In the sequel, we consider tests of the composite hypothesis

\begin{equation} \label{composite_h}
H_{0h} : h (\tilde{\mbox{\boldmath$\theta$}}) = 0 ,
\end{equation} where $h : \mathbb{R}^{2(p-1)} \rightarrow \mathbb{R}^{r}$ is a vector-valued function such that the derivative matrix $H(\tilde{\mbox{\boldmath$\theta$}}) = (\partial / \partial \tilde{\mbox{\boldmath$\theta$}}) h (\tilde{\mbox{\boldmath$\theta$}})^T$ is continuous in $\tilde{\mbox{\boldmath$\theta$}}$ and the $Rank (H(\tilde{\mbox{\boldmath$\theta$}})) = r$. In order to develop these composite tests, consider the Taylor expansion

\[
h(\tilde{\mbox{\boldmath$\theta$}} + n^{-1/2} {\bf u}) = h (\tilde{\mbox{\boldmath$\theta$}}) + n^{-1/2} H^T(\tilde{\mbox{\boldmath$\theta$}}^\star) {\bf u} = h (\tilde{\mbox{\boldmath$\theta$}}) +n^{-1/2} H^T(\tilde{\mbox{\boldmath$\theta$}})  {\bf u} + n^{-1/2} \left[ H^T(\tilde{\mbox{\boldmath$\theta$}}^\star)-H^T(\tilde{\mbox{\boldmath$\theta$}}) \right]{\bf u},
\] where $\tilde{\mbox{\boldmath$\theta$}}^\star = \tilde{\mbox{\boldmath$\theta$}} + n^{-1/2} \gamma {\bf u}$, $0 < \gamma < 1$ and ${\bf u}\in  \mathbb{R}^{2(p-1)}$. By applying the assumption on the continuity of $H(\tilde{\mbox{\boldmath$\theta$}})$, we arrive at the following expression

\[
\sqrt{n} \left[ h(\tilde{\mbox{\boldmath$\theta$}} + n^{-1/2}{\bf u}) - h (\tilde{\mbox{\boldmath$\theta$}}) \right] = H^T(\tilde{\mbox{\boldmath$\theta$}})  {\bf u} + n^{-1/2} \left[ H^T(\tilde{\mbox{\boldmath$\theta$}}^\star)-H^T(\tilde{\mbox{\boldmath$\theta$}}) \right]{\bf u}.
\] Letting ${\bf u}= \sqrt{n} ( \hat{\tilde{\mbox{\boldmath$\theta$}}}^n - \tilde{\mbox{\boldmath$\theta$}})$, we obtain

\medskip
\begin{equation} \label{asy_comp}
\sqrt{n} \left[ h(\hat{\tilde{\mbox{\boldmath$\theta$}}}^n) - h (\tilde{\mbox{\boldmath$\theta$}}) \right] - H^T(\tilde{\mbox{\boldmath$\theta$}}) \sqrt{n} ( \hat{\tilde{\mbox{\boldmath$\theta$}}}^n - \tilde{\mbox{\boldmath$\theta$}}) = o_p(1) .
\end{equation}

\begin{theorem} \label{testecomposto} The compound Wald statistic

\begin{equation} \label{eqWald22}
Q_w = \left[ h(\hat{\tilde{\mbox{\boldmath$\theta$}}}^n) - h (\tilde{\mbox{\boldmath$\theta$}}) \right]^T \left[ H^T (\hat{\tilde{\mbox{\boldmath$\theta$}}}^n)\left( \tilde{J}^n (\hat{{\bm \theta}}^n) \right)^{-1} H(\hat{\tilde{\mbox{\boldmath$\theta$}}}^n) \right]^{-1} \left[ h(\hat{\tilde{\mbox{\boldmath$\theta$}}}^n) - h (\tilde{\mbox{\boldmath$\theta$}}) \right] \Rightarrow_u {\bf z}_r^T {\bf z}_r ,
\end{equation} has an asymptotic $ \chi^2_{r} $ distribution.

\end{theorem}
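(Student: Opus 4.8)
The plan is to reduce the compound statistic to the basic Wald statistic already controlled by Corollary \ref{wald_test}, using the delta-method-type linearization in equation (\ref{asy_comp}) together with the joint convergence in equation (\ref{TesteWald}). First I would observe that, under $H_{0h}$, we have $h(\tilde{\mbox{\boldmath$\theta$}}) = 0$, so equation (\ref{asy_comp}) gives
\[
\sqrt{n}\, h(\hat{\tilde{\mbox{\boldmath$\theta$}}}^n) = H^T(\tilde{\mbox{\boldmath$\theta$}})\,\sqrt{n}\,(\hat{\tilde{\mbox{\boldmath$\theta$}}}^n - \tilde{\mbox{\boldmath$\theta$}}) + o_p(1).
\]
From equation (\ref{fan_MLE}) we know $\sqrt{n}(\hat{\tilde{\mbox{\boldmath$\theta$}}}^n - \tilde{\mbox{\boldmath$\theta$}}) \Rightarrow_u [\tilde{W}(\tilde{\mbox{\boldmath$\theta$}})]^{-1/2}{\bf z}$, so the natural target is to show that the random normalizing matrix inside $Q_w$ converts this non-normal limit into a standard normal vector of dimension $r$.

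Second, I would establish the consistency of the plug-in estimators appearing in $Q_w$. By Corollary \ref{Consi_MLE} the MLE $\hat{\tilde{\mbox{\boldmath$\theta$}}}^n \rightarrow_u \tilde{\mbox{\boldmath$\theta$}}$, and by the assumed continuity of $H(\tilde{\mbox{\boldmath$\theta$}})$ together with the continuous mapping theorem we get $H(\hat{\tilde{\mbox{\boldmath$\theta$}}}^n) \rightarrow_u H(\tilde{\mbox{\boldmath$\theta$}})$. From equation (\ref{TesteWald}) we have $\frac{1}{n}\tilde{J}^n(\mbox{\boldmath$\theta$}) \Rightarrow_u \tilde{W}(\tilde{\mbox{\boldmath$\theta$}})$, and since $\tilde{W}(\tilde{\mbox{\boldmath$\theta$}})$ is positive definite with probability one (Theorem \ref{Score_limit}), the inverse and the inner $r \times r$ matrix $H^T(\frac{1}{n}\tilde{J}^n)^{-1}H$ are continuous functionals on the relevant event. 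I would then assemble the joint uniform weak convergence of the triple
\[
\left( \sqrt{n}\,h(\hat{\tilde{\mbox{\boldmath$\theta$}}}^n),\; H(\hat{\tilde{\mbox{\boldmath$\theta$}}}^n),\; \tfrac{1}{n}\tilde{J}^n(\hat{{\bm \theta}}^n) \right) \Rightarrow_u \left( H^T(\tilde{\mbox{\boldmath$\theta$}})[\tilde{W}(\tilde{\mbox{\boldmath$\theta$}})]^{-1/2}{\bf z},\; H(\tilde{\mbox{\boldmath$\theta$}}),\; \tilde{W}(\tilde{\mbox{\boldmath$\theta$}}) \right),
\]
using the lemma immediately preceding Corollary \ref{asymp_normality_3} to replace $\tilde{J}^n(\mbox{\boldmath$\theta$})$ by $\tilde{J}^n(\hat{{\bm \theta}}^n)$, and Slutsky-type arguments for the $\rightarrow_u 0$ and continuous terms.

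Third, applying the continuous mapping theorem to $Q_w$ as a function of this triple, the limit becomes
\[
\left( H^T[\tilde{W}]^{-1/2}{\bf z} \right)^T \left( H^T \tilde{W}^{-1} H \right)^{-1} \left( H^T[\tilde{W}]^{-1/2}{\bf z} \right),
\]
where I abbreviate $H = H(\tilde{\mbox{\boldmath$\theta$}})$ and $\tilde{W} = \tilde{W}(\tilde{\mbox{\boldmath$\theta$}})$. The key algebraic observation is that, conditionally on $\tilde{W}$, the vector $\tilde{W}^{-1/2}{\bf z}$ is Gaussian with covariance $\tilde{W}^{-1}$ (since ${\bf z}$ is standard normal and independent of $\tilde{W}$), so $H^T \tilde{W}^{-1/2}{\bf z}$ has conditional covariance $H^T \tilde{W}^{-1} H$; the quadratic form is therefore a conditional chi-square with $r$ degrees of freedom because $\mathrm{Rank}(H) = r$ guarantees the inner matrix is invertible. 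Since this conditional law does not depend on $\tilde{W}$, the limit is unconditionally $\chi^2_r$, equivalently ${\bf z}_r^T {\bf z}_r$ for a standard normal ${\bf z}_r \in \mathbb{R}^r$. The main obstacle I anticipate is bookkeeping the randomness of $\tilde{W}$ throughout: because the model is nonergodic, one cannot simply invoke ordinary convergence-in-distribution to a fixed Gaussian, and every step must be phrased in the uniform ($\Rightarrow_u$) framework of Sweeting, carrying the matrix $\tilde{W}(\tilde{\mbox{\boldmath$\theta$}})$ as a genuine random object and only collapsing it via the conditioning argument at the very end.
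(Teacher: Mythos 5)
Your proposal is correct and follows essentially the same route as the paper: linearize via equation (\ref{asy_comp}), invoke the joint uniform convergence in (\ref{TesteWald}) together with Corollary \ref{Consi_MLE} for the plug-in quantities, and identify the limit of the quadratic form as $\chi^2_r$ by conditioning on $\tilde{W}(\tilde{\mbox{\boldmath$\theta$}})$ and using the independence of ${\bf z}$ from $\tilde{W}(\tilde{\mbox{\boldmath$\theta$}})$ (the paper phrases this as the distributional identity $H^T(\tilde{\mbox{\boldmath$\theta$}})(\tilde{W}(\tilde{\mbox{\boldmath$\theta$}}))^{-1/2}{\bf z} \overset{d}{=} [H^T(\tilde{\mbox{\boldmath$\theta$}})\tilde{W}(\tilde{\mbox{\boldmath$\theta$}})^{-1}H(\tilde{\mbox{\boldmath$\theta$}})]^{1/2}{\bf z}_r$, which is exactly your conditional-covariance observation). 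Your write-up is, if anything, slightly more explicit than the paper's about the plug-in consistency of $\tilde{J}^n(\hat{{\bm \theta}}^n)$ and the continuous mapping step, but the underlying argument is the same.
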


\begin{proof}
By applying equation \ref{TesteWald}, we have that

\[
\left( H^T(\tilde{\mbox{\boldmath$\theta$}}) \sqrt{n} \left(  \hat{\tilde{{\bm \theta}}}^n - \tilde{{\bm \theta}} \right) , \frac{1}{n} \tilde{J}^n ({\bm \theta})    \right) \Rightarrow_u \left( H^T(\tilde{\mbox{\boldmath$\theta$}}) \left( \tilde{W} ( \tilde{{\bm \theta}}) \right)^{-1/2} {\bf z} , \tilde{W} (\tilde{{\bm \theta}}) \right).
\] As the distribution of ${\bf z}$ is independent of $\tilde{W} (\tilde{{\bm \theta}})$, we obtain that

\[
\mathbb{P}_{(x_1, \cdots , x_m)} \left[ H^T(\tilde{\mbox{\boldmath$\theta$}}) \left( \tilde{W} ( \tilde{{\bm \theta}}) \right)^{-1/2} {\bf z} \in  D \mid \tilde{W} (\tilde{{\bm \theta}})  \right] = \mathbb{P}_{(x_1, \cdots , x_m)} \left[ \left[ H^T (\tilde{\mbox{\boldmath$\theta$}})\left( \tilde{W} (\tilde{{\bm \theta}})  \right)^{-1} H(\tilde{\mbox{\boldmath$\theta$}}) \right]^{1/2} {\bf z}_r \in  D \mid \tilde{W} (\tilde{{\bm \theta}})  \right],
\] for every $D \in \beta(\mathbb{R}^r)$, where ${\bf z}_r$ is a standard normal random vector on $\mathbb{R}^{r}$, independent of the random matrix $\tilde{W} (\tilde{{\bm \theta}})$. Then, the random vectors

\[
 H^T(\tilde{\mbox{\boldmath$\theta$}}) \left( \tilde{W} ( \tilde{{\bm \theta}}) \right)^{-1/2} {\bf z}  \quad \text{and} \quad \left[ H^T (\tilde{\mbox{\boldmath$\theta$}})\left( \tilde{W} (\tilde{{\bm \theta}})  \right)^{-1} H(\tilde{\mbox{\boldmath$\theta$}}) \right]^{1/2} {\bf z}_r
\] have the same distribution. As a consequence, we obtain that

\[
\left( H^T(\tilde{\mbox{\boldmath$\theta$}}) \sqrt{n} \left(  \hat{\tilde{{\bm \theta}}}^n - \tilde{{\bm \theta}} \right) , \frac{1}{n} \tilde{J}^n ({\bm \theta})    \right) \Rightarrow_u \left( \left[ H^T (\tilde{\mbox{\boldmath$\theta$}})\left( \tilde{W} (\tilde{{\bm \theta}})  \right)^{-1} H(\tilde{\mbox{\boldmath$\theta$}}) \right]^{1/2} {\bf z}_r , \tilde{W} (\tilde{{\bm \theta}}) \right).
\] By applying Corollary \ref{Consi_MLE} and Equation (\ref{asy_comp}), we conclude that

\[
\left[ h(\hat{\tilde{\mbox{\boldmath$\theta$}}}^n) - h (\tilde{\mbox{\boldmath$\theta$}}) \right]^T \left[ H^T (\hat{\tilde{\mbox{\boldmath$\theta$}}}^n)\left( \tilde{J}^n (\hat{{\bm \theta}}^n) \right)^{-1} H(\hat{\tilde{\mbox{\boldmath$\theta$}}}^n) \right]^{-1} \left[ h(\hat{\tilde{\mbox{\boldmath$\theta$}}}^n) - h (\tilde{\mbox{\boldmath$\theta$}}) \right] \Rightarrow_u {\bf z}_r^T {\bf z}_r .
\]

\end{proof}

If the null hypothesis  (\ref{eaal}) is rejected,  the multiple test is performed,
\begin{equation} \label{ebal} H_{0i}:\alpha_i=0 \quad \text{and} \quad\beta_i=1, \quad i=2, \cdots , p.
\end{equation}
Let $\left( \tilde{J}^n (\hat{{\bm \theta}}^n) \right)^{-1} =(v_{\theta_i
\theta_j})$, i.e., $v_{\theta_i \theta_j}$ is representing the
 $ijth$ element of the matrix $\left( \tilde{J}^n (\hat{{\bm \theta}}^n) \right)^{-1}$, then $Q_w$ for the hypothesis (\ref{ebal}) can be written as
\begin{equation} \label{testeind}Q_{wi}=\frac{(\hat{\tilde{\beta}}_{i}-1)^2 v_{\alpha_i
\alpha_i}-2\hat{\tilde{\alpha}}_i (\hat{\tilde{\beta}}_{i}-1)
v_{\alpha_i\beta_i}+\hat{\tilde{\alpha}}_i^2
v_{\beta_i\beta_i}}{v_{\alpha_i\alpha_i}
v_{\beta_i\beta_i}-v_{\alpha_i\beta_i}^2}.\end{equation}

As we are considering multiple test, it is important to control the type 1 error probability. For controlling the familywise error, it can be considered for example,  the Simes-Hockberg procedure (\cite{hochberg1988sharper}). To provide a graphical analysis of the performance of the laboratories measurements with respect to the measurements of the reference laboratory, the result obtained in
Theorem \ref{testecomposto} can be used to obtain the confidence regions. Next, we present a simulation study considering the tests given in (\ref{eqWald11}) and (\ref{eqWald22}).

\section{Simulation}
In this section we perform a simulation study to compare the behavior of the Wald test statistics developed in the previous section for different number of replicas, parameter values and nominal
levels of the test. Considering the model defined in (\ref{mod}) and (\ref{mod1}), with $p=5$ (number of participant laboratories) and $m=5$ (number of different engine rotation
values), it was generated 10000 samples with 3, 7, 15 and 30 replicas. The parameters of the true unobserved value of the item under testing at the $jth$ point ($x_j$, $j=1,\cdots,m$) was assumed to be:    $\mu_{x_1}=10,~~\mu_{x_2}=20,~~\mu_{x_3}=30,~~\mu_{x_4}=40$ and $\mu_{x_5}=50$, for the mean values and
$\sigma_{x_1}= 0.24,~~\sigma_{x_2}= 0.31,~~\sigma_{x_3}=0.38,~~\sigma_{x_4}= 0.45$ and $\sigma_{x_5}= 0.52$, for the standard deviations. It was considered three sets of parameter values for the standard deviation related to the measurement error of each laboratory ($i, i=1,\cdots,p$) at the $jth$ engine rotation value.

	\begin{enumerate}
		\item $ \sigma_{ij}^a:~~\sigma_{i1}=0.1 ,~~\sigma_{i2}= 0.2,~~ \sigma_{i3}= 0.3,~~ \sigma_{i4}= 0.4 ,~~\sigma_{i5}= 0.5 ; $
		\item $ \sigma_{ij}^b:~~\sigma_{i1}=0.2 ,~~\sigma_{i2}= 0.4,~~ \sigma_{i3}= 0.6,~~ \sigma_{i4}= 0.8 ,~~\sigma_{i5}= 1.0 ; $
		\item $ \sigma_{ij}^c:~~\sigma_{i1}=0.3 ,~~\sigma_{i2}= 0.6,~~ \sigma_{i3}= 0.9,~~ \sigma_{i4}= 1.2 ,~~\sigma_{i5}= 1.5  .$
		%\item $ \sigma_{ij}^d:~~\sigma_{i1}=0.5 ,~~\sigma_{i2}= 1.0,~~ \sigma_{i3}= 1.5,~~ \sigma_{i4}= 2.0 ,~~\sigma_{i5}= 2.5  $
	\end{enumerate}

Moreover, it was considered $\alpha=1\%$, $\alpha=5\%$ and $\alpha=10\%$ for the nominal significance levels. The routines were implemented in \cite{R2016}.

First, we consider the test for the equivalence of all laboratories with respect to the reference laboratory:
\[ H_0:\alpha_2=\cdots=\alpha_5=0 \quad \text{and}  \quad \beta_2=\cdots=\beta_5=1 .\]

It was obtained the empirical significance levels considering the test obtained in (\ref{eqWald11}). The results are summarized in Table \ref{yearlyprivatizatio01311212030320202n}.

\normalsize
\begin{table}[H]
		\centering
		\captionsetup{justification=centering}
		\caption{ Empirical sizes for the Wald test statistics for the test $ H_0: \alpha_2=\dots=\alpha_5=0,\beta_2=\dots=\beta_5=1 $}
		\label{yearlyprivatizatio01311212030320202n}
		{\normalsize
			\begin{tabular}{lccc ccc ccc}
				\hline
				\multicolumn{1}{c}{} & \multicolumn{3}{c}{$ \sigma_{ij}^a $} &
				\multicolumn{3}{c}{$ \sigma_{ij}^b $} & \multicolumn{3}{c}{$ \sigma_{ij}^c $} \\
				\cmidrule(l){2-4} \cmidrule(l){5-7} \cmidrule(l){8-10}
				$ n_i $ & 1\%  & 5\%  & 10\%  & 1\%  & 5\%  & 10\%   & 1\%  & 5\%  & 10\%  \\
				\hline
				3 			&  0.012 & 0.059 & 0.114
				& 0.023 & 0.084 & 0.15
				& 0.043 & 0.126 & 0.202
				\\
				7 			&   0.011 & 0.053 & 0.106
				& 0.015 & 0.065 & 0.127
				& 0.019 & 0.076 & 0.140
				\\
				15			& 0.011 & 0.053 & 0.102
				& 0.011 & 0.058 & 0.109
				& 0.017 & 0.068 & 0.124
				\\
				30			& 0.010 & 0.053 & 0.107
				& 0.011 & 0.053 & 0.102
				&  0.012 & 0.056 & 0.110
				\\
				\hline
			\end{tabular}
		}
	\end{table}
It can be noticed that as the number of replicas ($n_i$) increase, the empirical sizes approaches the nominal sizes. Also,
considering the first set of parameter values for the standard deviation of the measurement error of the laboratories ($\sigma_{ij}^a$) the nominal and empirical values
are close even for small number of replicas, however as these standard deviations increase ($\sigma_{ij}^b$ and $\sigma_{ij}^c$) we need a larger
number of replicas.

Next,
without loss of generality we consider the second laboratory to test for the equivalence of a laboratory with respect to the reference laboratory:
\[ H_0:\alpha_2=0 \quad \text{and}  \quad \beta_2=1 .\]

It was obtained the empirical significance levels considering the test obtained in (\ref{testeind}). The results are summarized in Table \ref{yearlyprivatizatio0131na} and it reaches the same conclusions as given above for the equivalence of all laboratories with respect to the reference laboratory.

\begin{table}[H]
		\centering
			\captionsetup{justification=centering}
		\caption{ Empirical sizes for the Wald test statistics for the test \\$ H_0: \alpha_2=0,~\beta_2=1 $ }
		\label{yearlyprivatizatio0131na}
		{
			\begin{tabular}{lccc ccc ccc }
				\hline
				\multicolumn{1}{c}{} & \multicolumn{3}{c}{\textbf{ $ \sigma_{ij}^a $}} & \multicolumn{3}{c}{\textbf{ $ \sigma_{ij}^b $}} & \multicolumn{3}{c}{\textbf{ $ \sigma_{ij}^c $}} \\
				\cmidrule(l){2-4} \cmidrule(l){5-7} \cmidrule(l){8-10}
				$ n_i $ & 1\%  & 5\%  & 10\%  & 1\%  & 5\%  & 10\%  & 1\%  & 5\%  & 10\%   \\
				\hline
				3 & 0.016 & 0.065 & 0.126
				& 0.024 & 0.081 & 0.147
				& 0.035 & 0.114 & 0.189
				 \\
				7 & 0.010 & 0.051 & 0.101
				& 0.017 & 0.070 & 0.129
				& 0.023 & 0.088 & 0.151
				\\
				15 & 0.010 & 0.051 & 0.101
				& 0.013 & 0.061 & 0.113
				& 0.016 & 0.068 & 0.126			
				\\
				30 & 0.008 & 0.048 & 0.102
				& 0.012 & 0.053 & 0.102
				& 0.013 & 0.063 & 0.120
				\\
				%30 &  0.0097 & 0.0478 & 0.0964 \\
				\hline
			\end{tabular}
		}
	\end{table}
Furthermore, to simulate the power of the test for the equivalence of all laboratories with respect to the reference laboratory,
it was considered a gradual distance from the null hypothesis for the second and forth laboratories and obtained the percentages of the observed values
of the test statistics which were greater than the $95^{th}$ quantile of the Chi-squared distribution with $8$ degree of freedom.

	Figure
\ref{poderTodos}
shows the power of the test
for different number of replicas ($n_i=3, 7, 15$ and $30$) with the parameter of the standard deviation of the measurement error of each laboratory at the $jth$ engine rotation value given by $ \sigma_{ij}^a $ and $ \sigma_{ij}^b $, respectively.

Notice that in both figures as the number of replicas increase the power of the test increases. Figure \ref{fig:cookPerturba.1Ap.M2058040320202562618} shows the power of the test as the standard deviation of the measurement error of the laboratories increases from $ \sigma_{ij}^a $ to $ \sigma_{ij}^b $ for fixed number of replicas .
In all cases the power of the test under $ \sigma_{ij}^a $ is greater than under $ \sigma_{ij}^b $. Another point to observe is the fact that the distance between the two curves (power under $ \sigma_{ij}^a $  and power under $ \sigma_{ij}^b $)
diminishes as the number of replicas increase.

Next, without loss of generality, we consider the test given in (\ref{ebal}) for the second laboratory. Figure \ref{fig:cookPerturba.1Ap.Multi.soUltimo.36142.only106.ld.52221elipse125620303032020618} shows the power of the test when
the standard deviation of the measurement error of the laboratories are given by $ \sigma_{ij}^a $ and $ \sigma_{ij}^b $ for different number of replicas.
As the number of replicas increase the power of the test increase, in addition when the standard deviation of the measurement error of the laboratories increase, the power decrease.

	\begin{figure}[H]
		\centering
		\captionsetup{justification=centering}
		\caption{ Simulated power for the Wald test statistics with $ \sigma_{ij}^a $ and $ \sigma_{ij}^b $ for the test
		$ H_0: \alpha_2=\dots=\alpha_5=0,\beta_2=\dots=\beta_5=1. $}
		\begin{tabular}{@{}cc@{}}
		\label{poderTodos}
			\includegraphics[width=.5\textwidth]{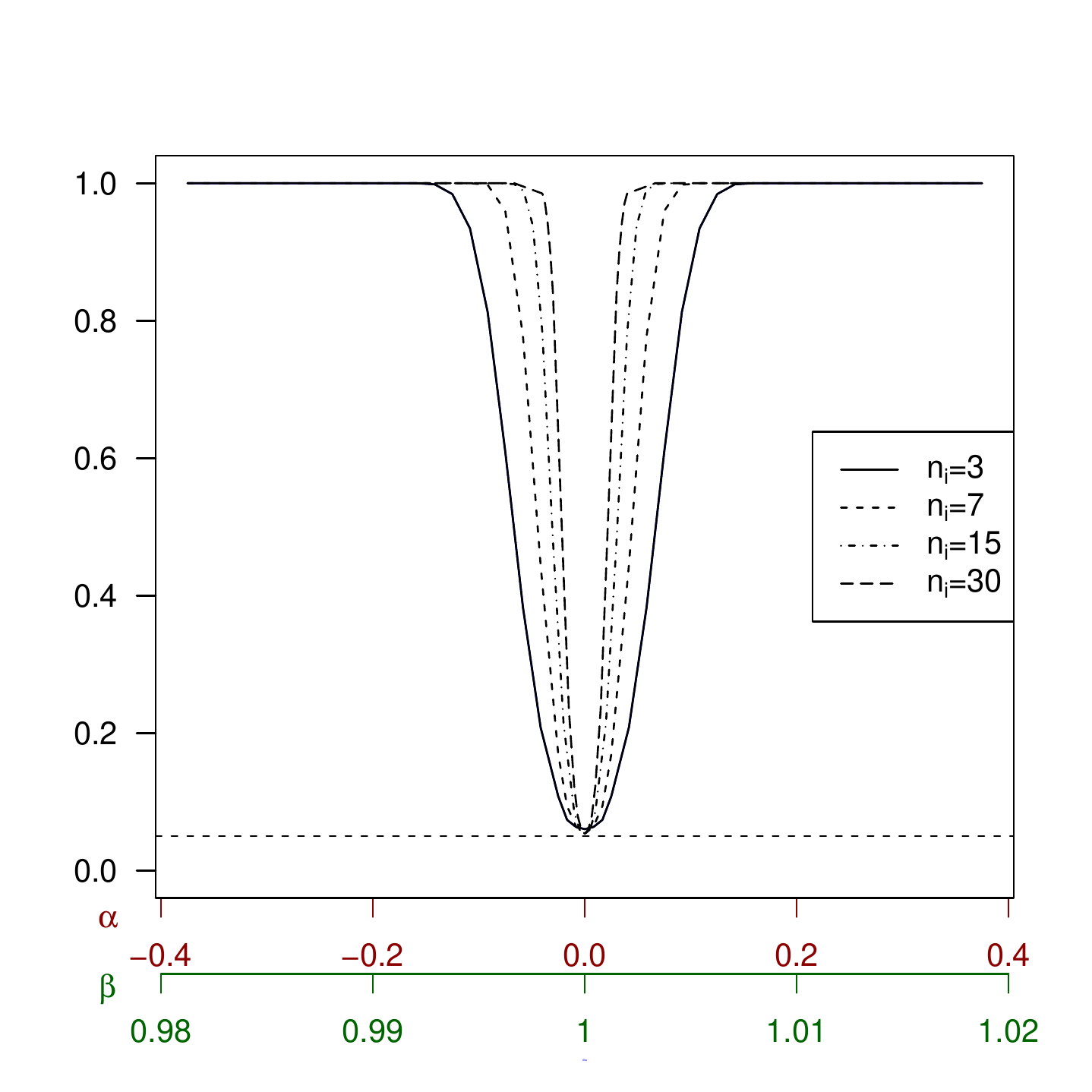} &	\includegraphics[width=.5\textwidth]{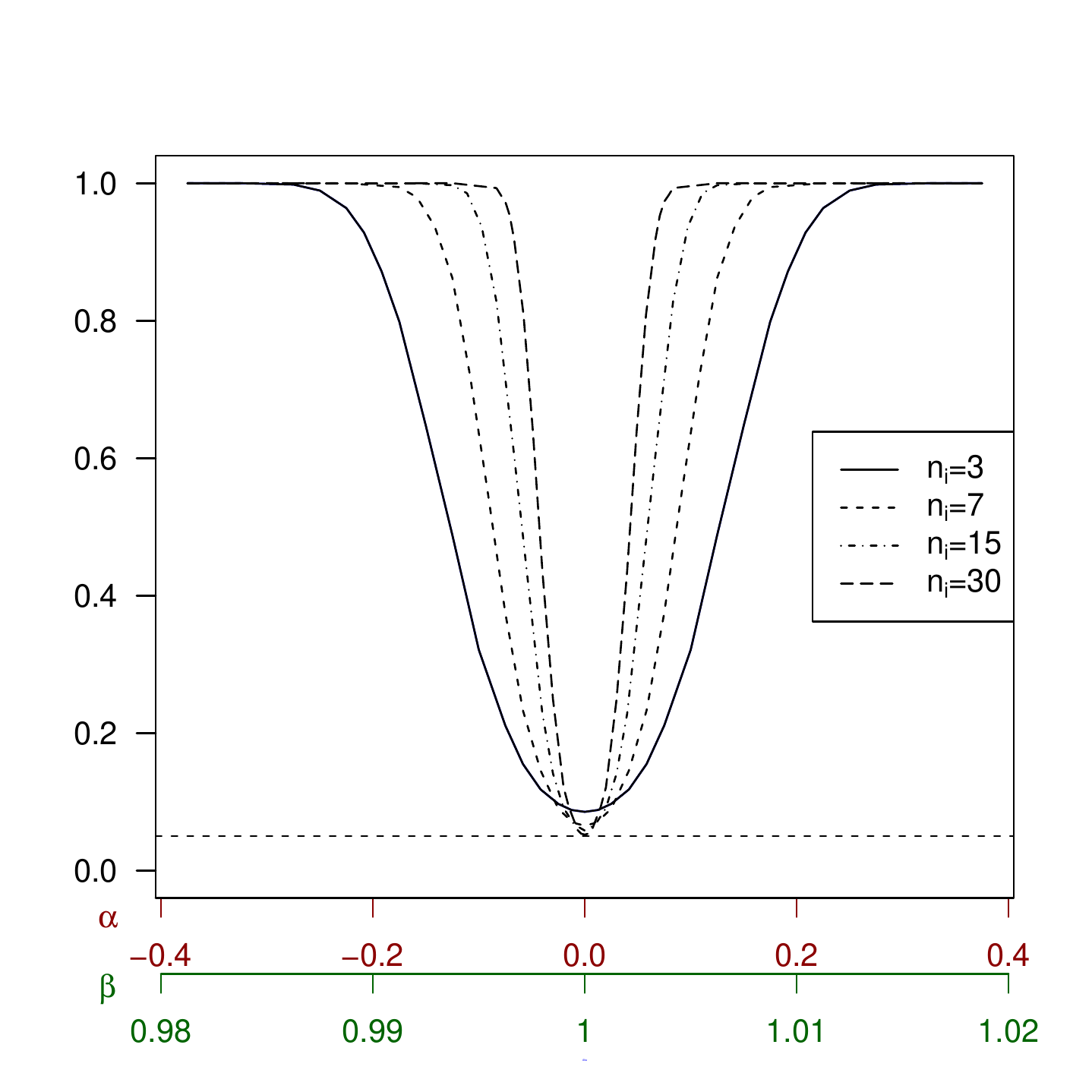}
		\end{tabular}
	    %	\label{fig:cookPerturba.1Ap.1225030302020.soUltimo.36142.only106.ld.52221elipse12562618}
	\end{figure}

\begin{figure}[H]
		\centering
		\captionsetup{justification=centering}
		\caption{ Simulated power for the Wald test statistics with $ \sigma_{ij}^a $ and $ \sigma_{ij}^b $ for the test $ H_0: \alpha_2=\dots=\alpha_5=0,\beta_2=\dots=\beta_5=1. $   }
		\begin{tabular}{@{}cc@{}}
			\includegraphics[width=.45\textwidth]{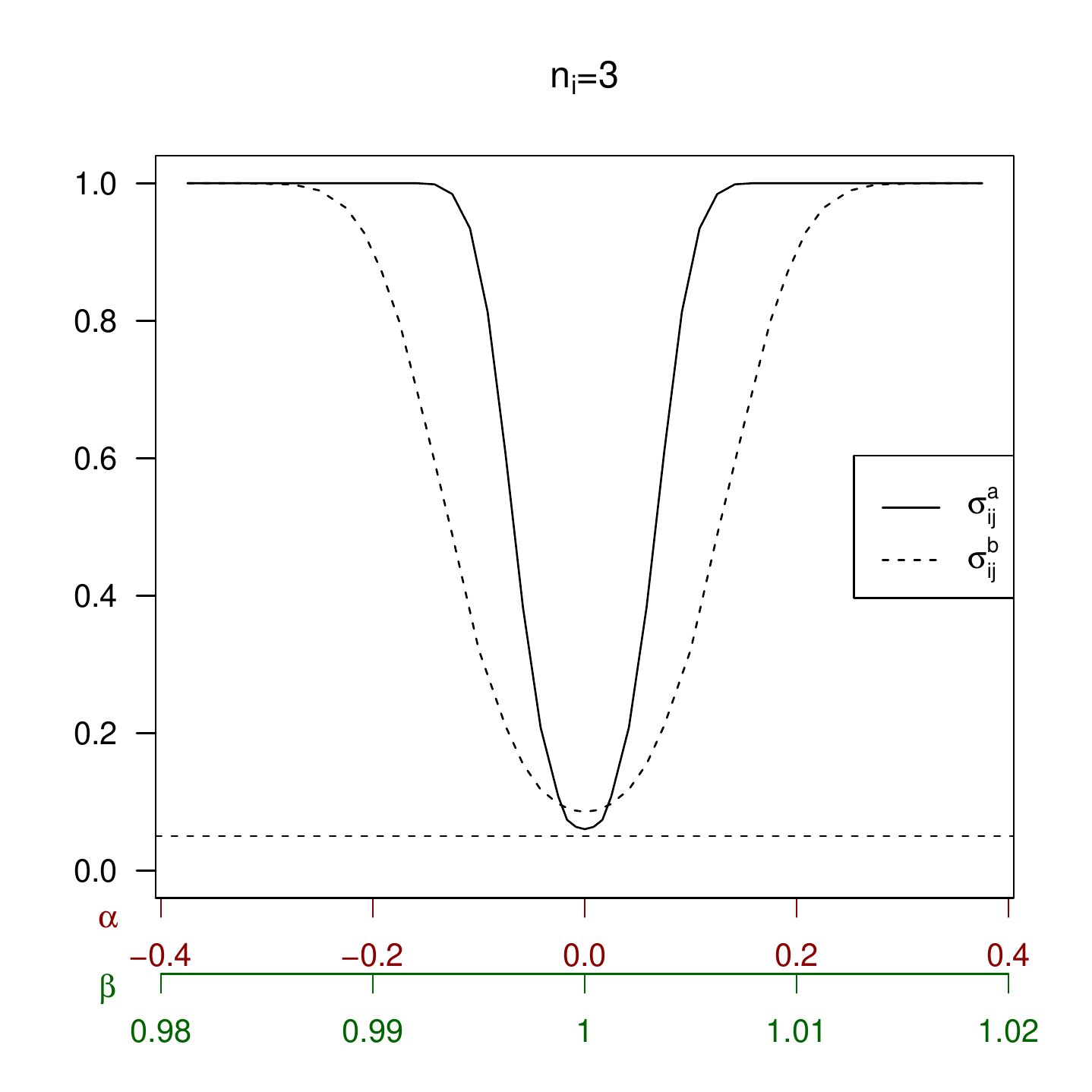} & \includegraphics[width=.45\textwidth]{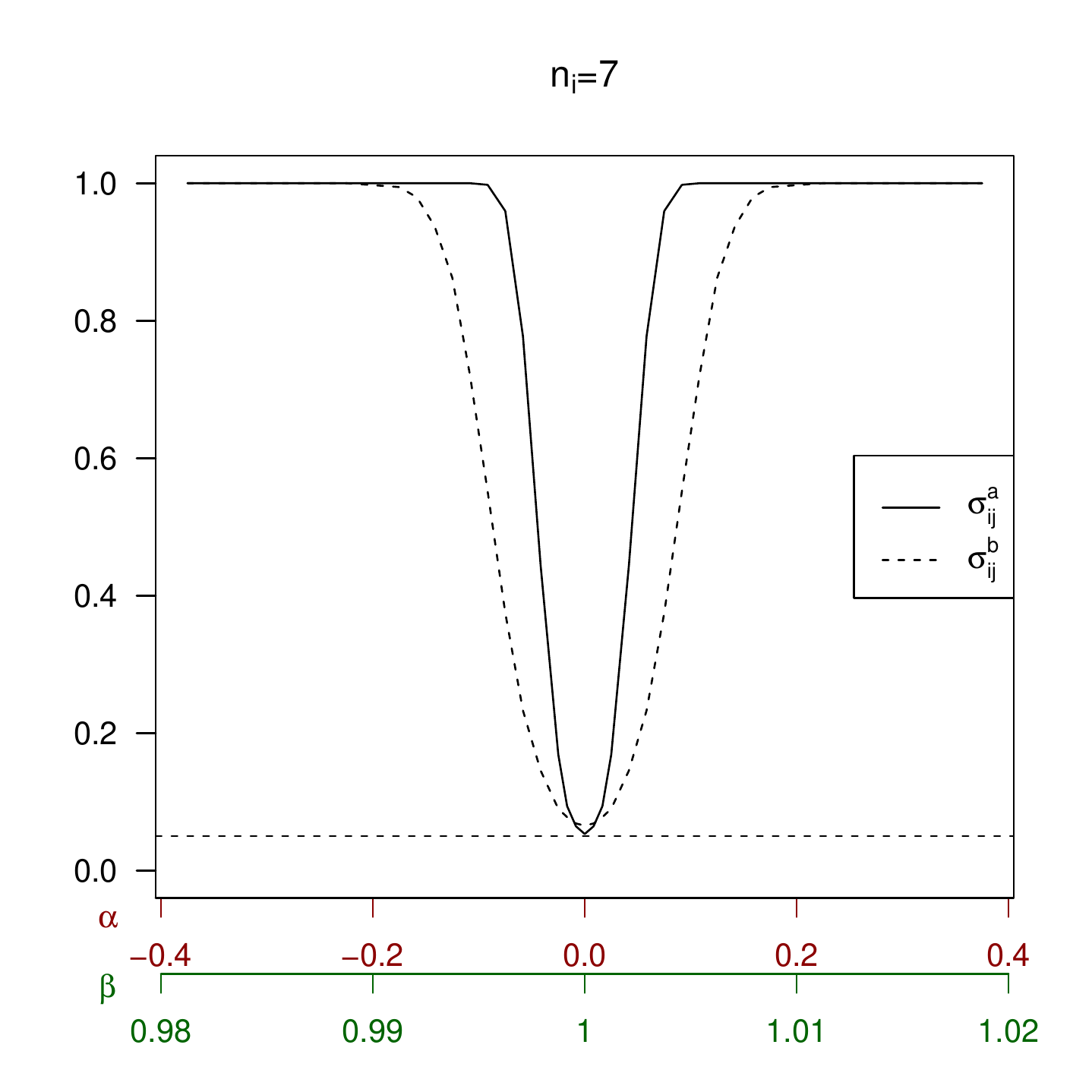} \\
			\includegraphics[width=.45\textwidth]{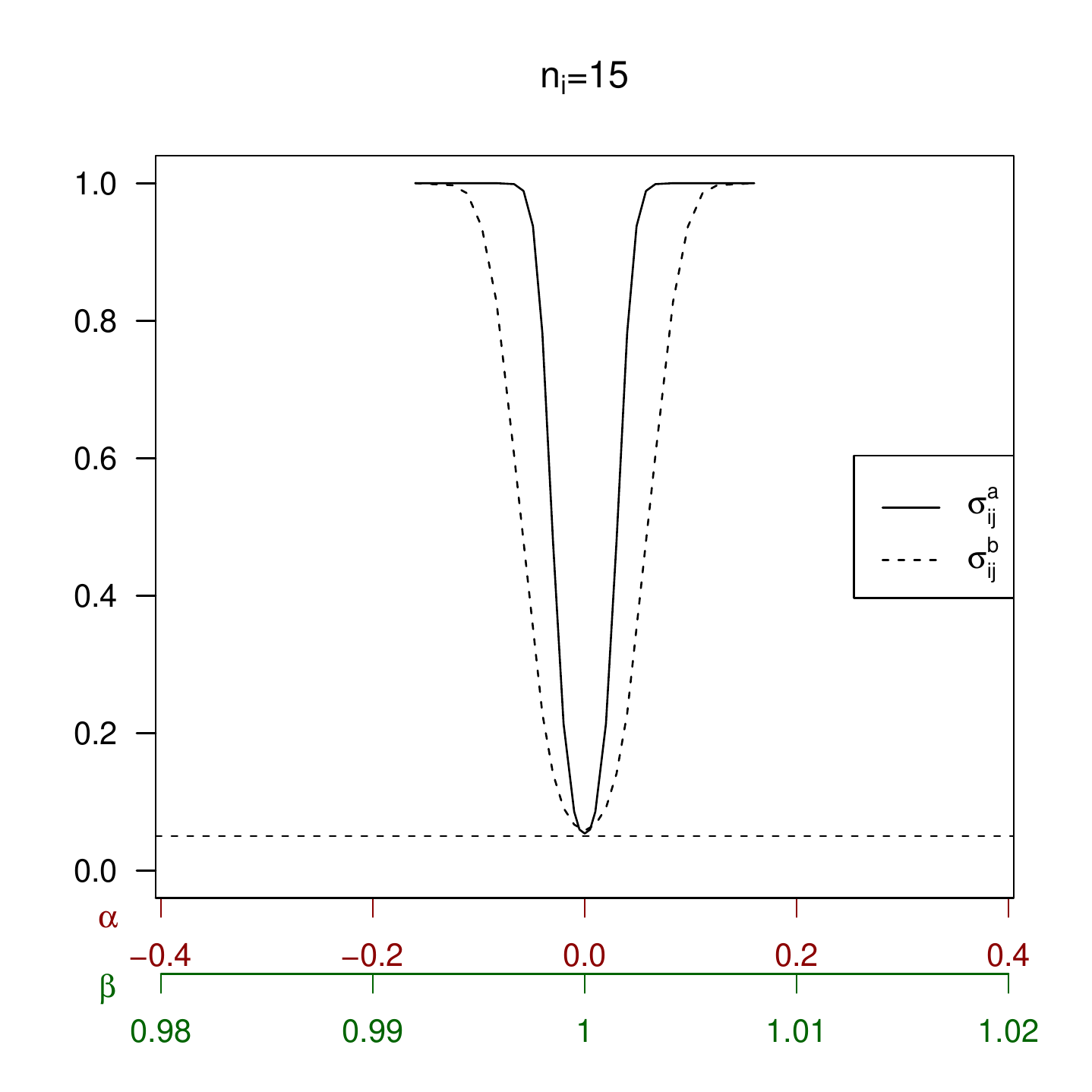} & \includegraphics[width=.45\textwidth]{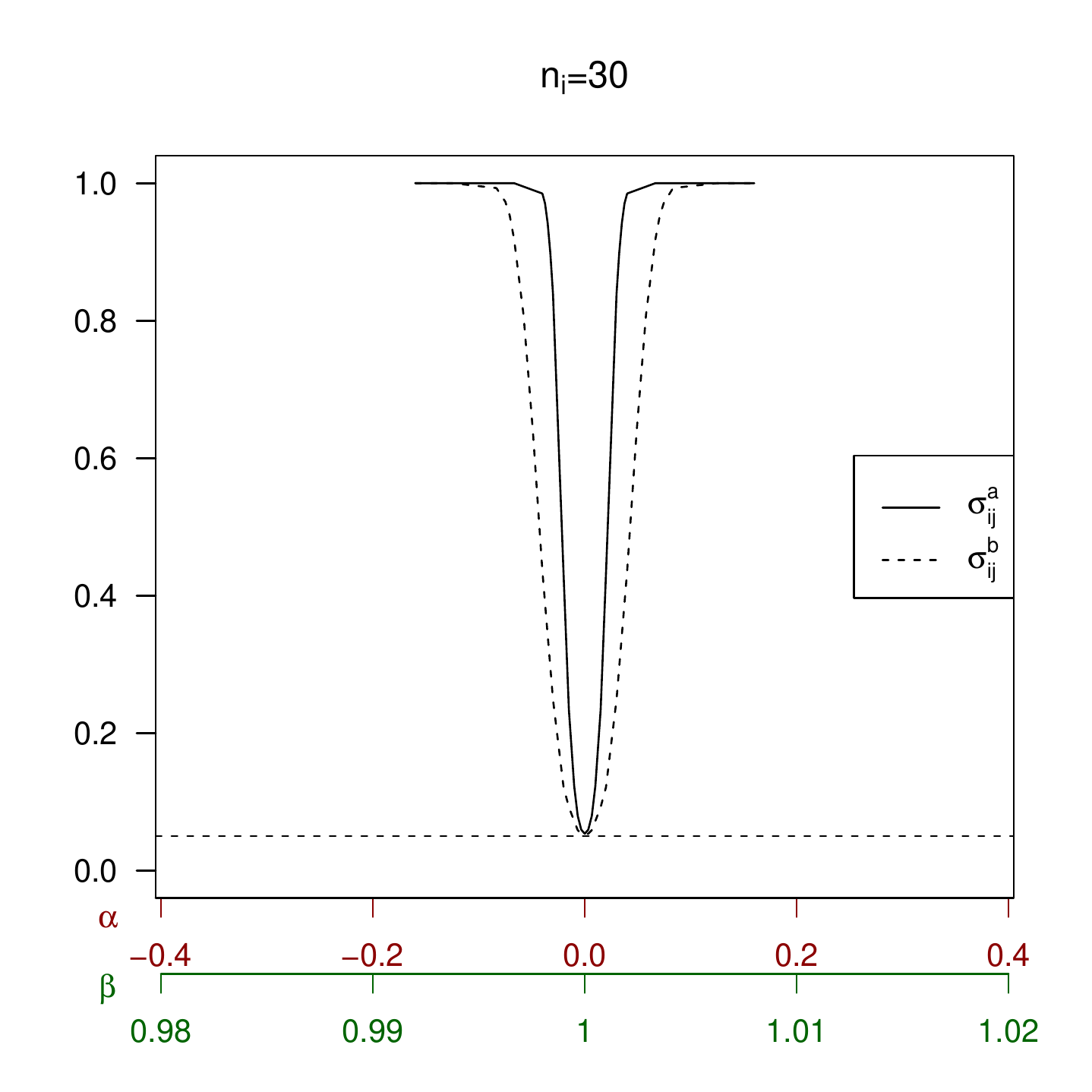}
		\end{tabular}
			\label{fig:cookPerturba.1Ap.M2058040320202562618}
	\end{figure}
In the next Section we apply the developed results for the real data set used in the stability study to show the usefulness of the proposed
methodology.

\section{Application}
In this application the GM power train developed one standard engine and its engine power was measured by 8 ($p$) laboratories at 9 ($m$) engine rotation values.
The measurements of each laboratory can be found in Appendix A. The natural variability ($\sigma^2_{x_j}$) associated with the true unobserved values was evaluated during the stability
study and the variance ($\sigma^2_{ij}$) of the measurement error corresponding to the $ith$ laboratory at the $jth$ rotation value, $i=1,\cdots,p;$ $j=1\cdots,m$, was determined by the combined variance calculated and provided by the $ith$ laboratory following the protocol proposed by ISO GUM (1995). Theses values, can also be found in Appendix A.

First, considering the EM algorithm presented in Section 2 the maximum likelihood estimates of the parameters were obtained in Table \ref{EstimativasMaximaVeross28011200}.

\begin{figure}[H]
		\centering
		\captionsetup{justification=centering}
			\caption{Simulated power for the Wald test statistics with $ \sigma_{ij}^a $ and $ \sigma_{ij}^b $ for the test\\
			$ H_0: \alpha_2=0, \beta_2=1. $}
		\begin{tabular}{@{}cc@{}}
		\label{fig:cookPerturba.1Ap.Multi.soUltimo.36142.only106.ld.52221elipse125620303032020618}
			%D:\Dropbox\Motor-juan\final\prueba16pc\iz2ni3AlpBetIndividualAlta
			\includegraphics[width=.5\textwidth]{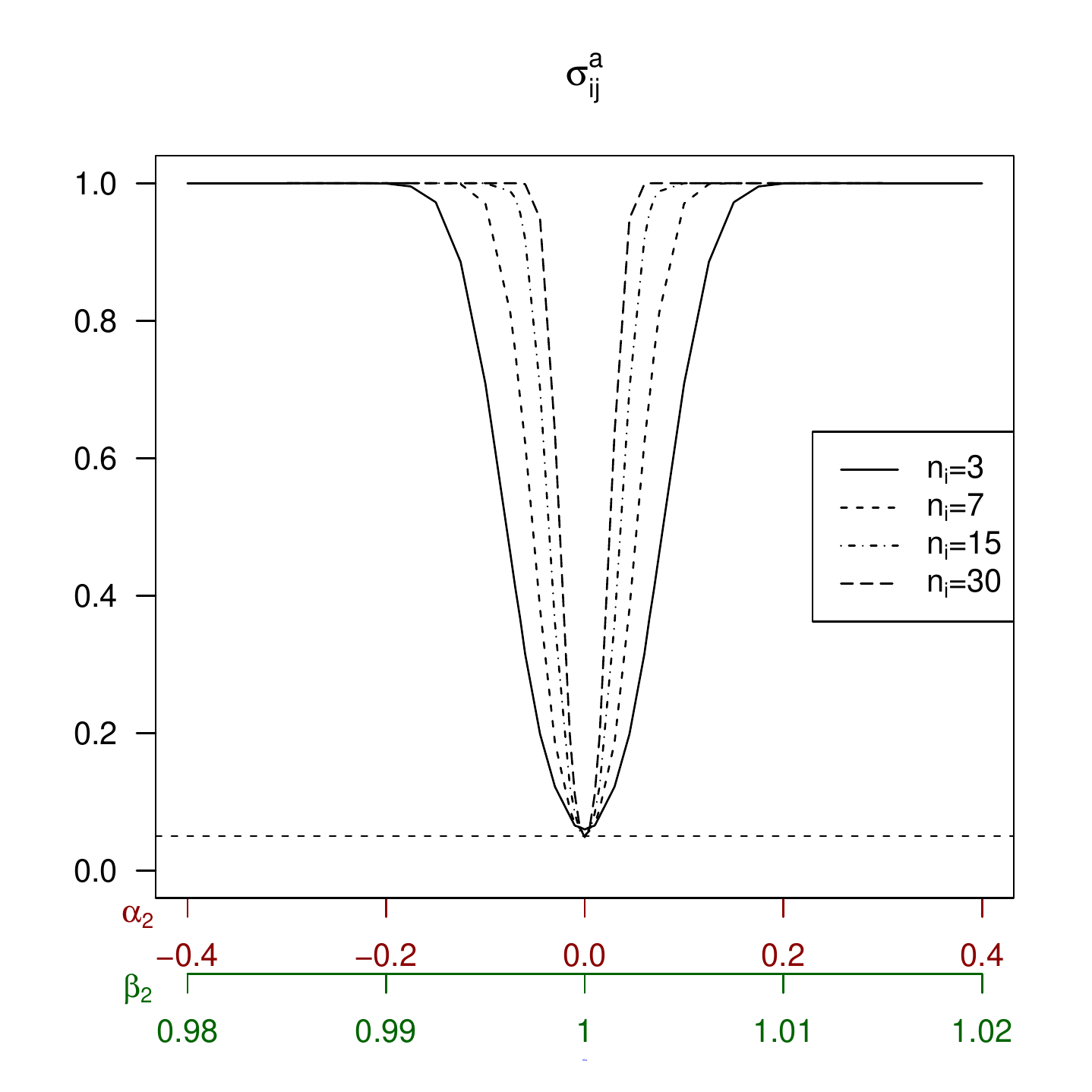} &\includegraphics[width=.5\textwidth]{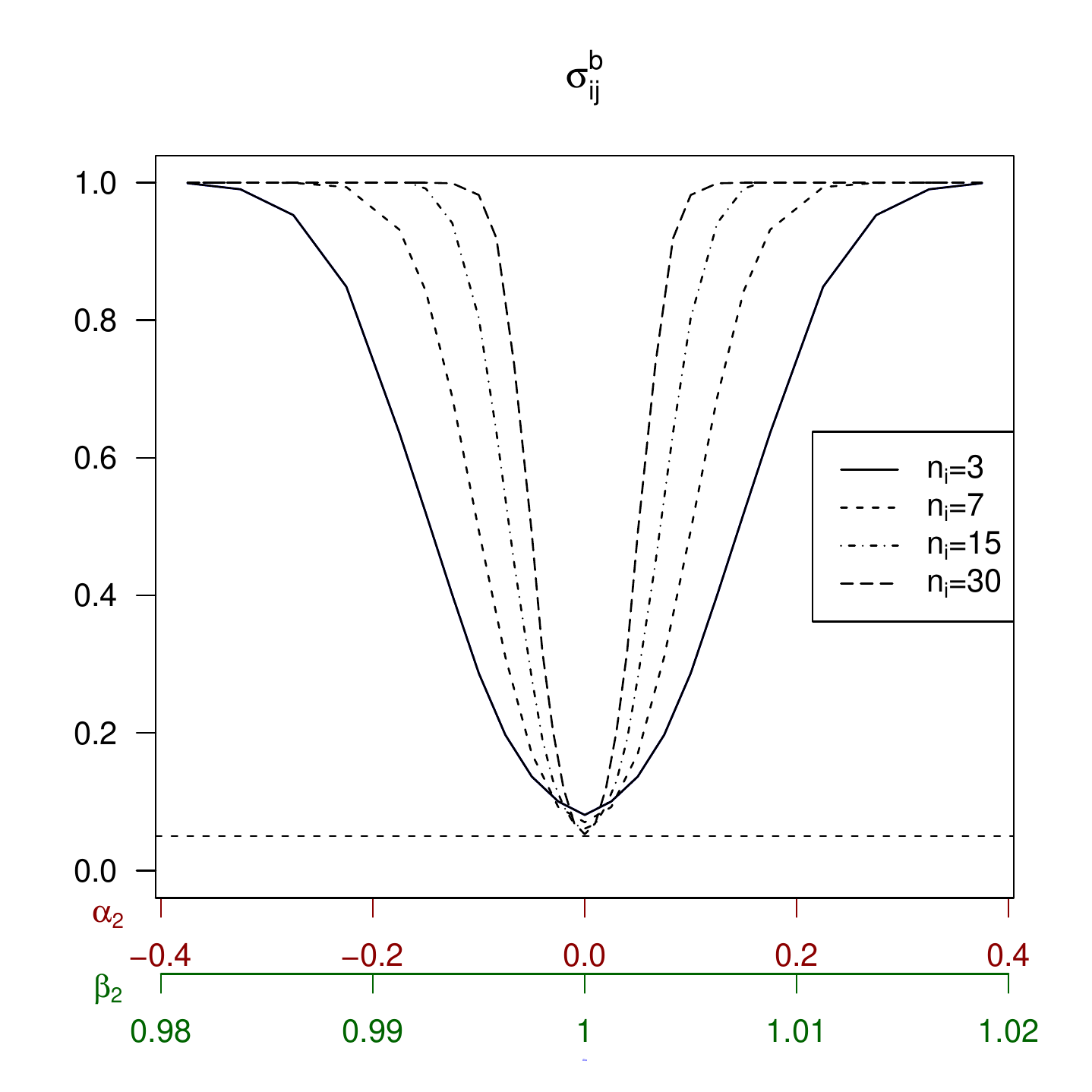}	
 		\end{tabular}
\end{figure}

\begin{table}[H]\centering
		%\captionsetup{font=scriptsize}
			\captionsetup{justification=centering}
		\caption{Maximum likelihood estimates of the parameters.}
		%com o padrão correspondente		desvios.
		\label{EstimativasMaximaVeross28011200}
	%	\scriptsize
	\begin{tabular}{ccccccccc}
	\hline
	& \multicolumn{7}{c}{laboratories} \\
	\cmidrule(l){2-8}
i	&$2$&3&4&5&6&7&8\\
	 $\hat{\alpha}_i$& 0.0700 & 0.1000& 0.0658 & 0.2183& 0.1288& -0.0315 & 0.0063 \\
	$\hat{\beta}_i$&  0.9661&  0.9856 &  0.9957&  0.9871 &  0.9983&  0.9745&  0.9913\\\hline
				\end{tabular}
	\end{table}

Subsequently, we constructed the confidence regions for the seven laboratories with the confident coefficient of $99\%$ and Bonferroni  corrections, so that
the familywise error of the test is less than $1\%$. These regions can be found in Figure \ref{fig:cookPerturba_1Ap_Multi_soUltimo_36142_only106_ld_52221elipse12562618aGgplot1}. %\ref{fig:cookPerturba_1Ap_Multi_soUltimo_36142_only106_ld_52221elipse12562618a}
%and \ref{fig:cookPerturba_1Ap_Multi_soUltimo_36142_only106_ld_52221elipse12562618b}.
We can conclude visually that laboratories 4, 5 and 6 are compliant with the reference laboratory.
Moreover, all of the 7 laboratories do not have additive bias.

\begin{figure}[H]
		%D:\Dropbox\Motor-juan\final
		%ElipseMatrixFifthDataNlm.51817seed.2s2xj.1Nivel
		\centering
				\begin{tabular}{@{}cc@{}}
			\includegraphics[width=.4\textwidth]{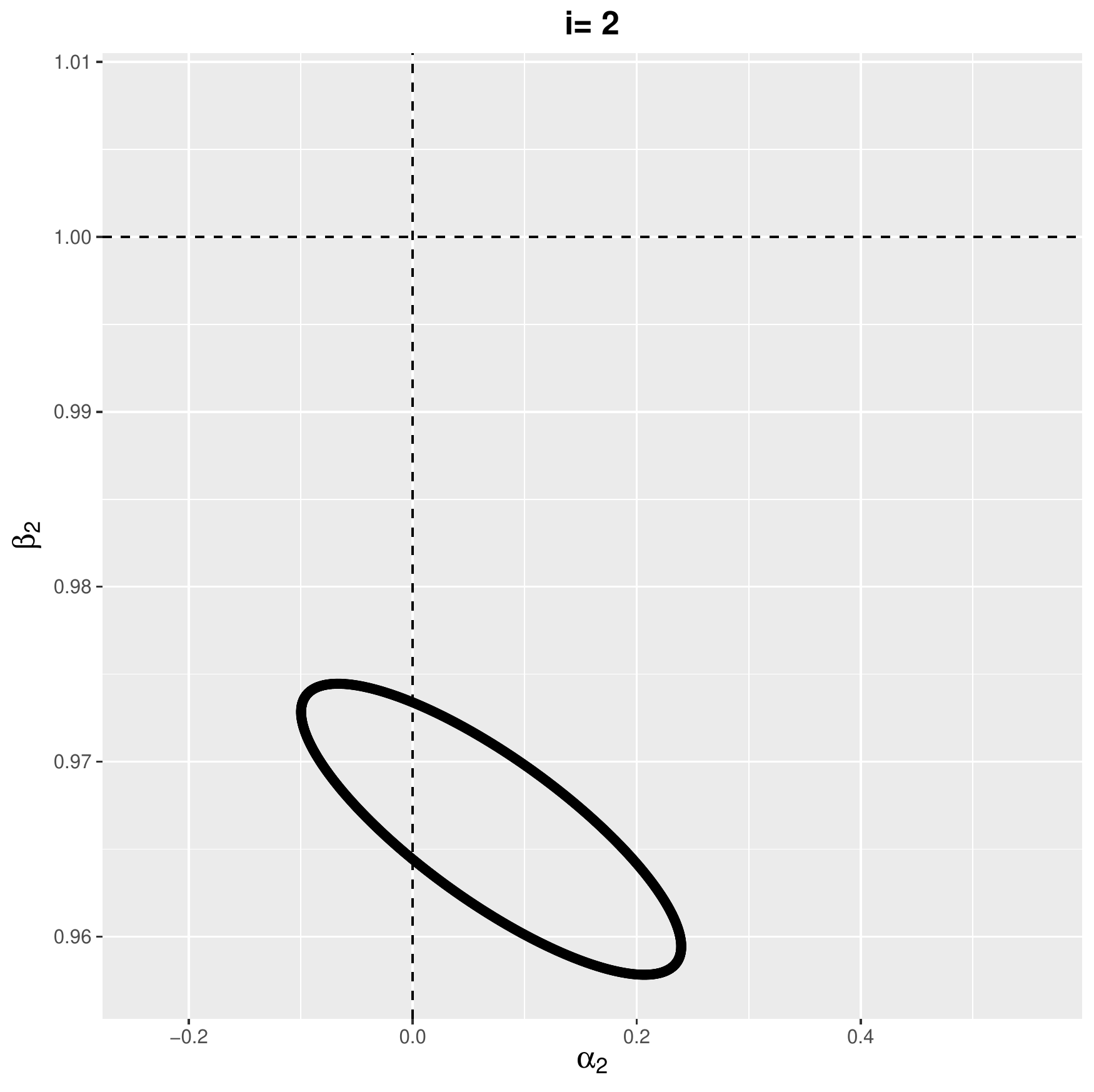}	&
			\includegraphics[width=.4\textwidth]{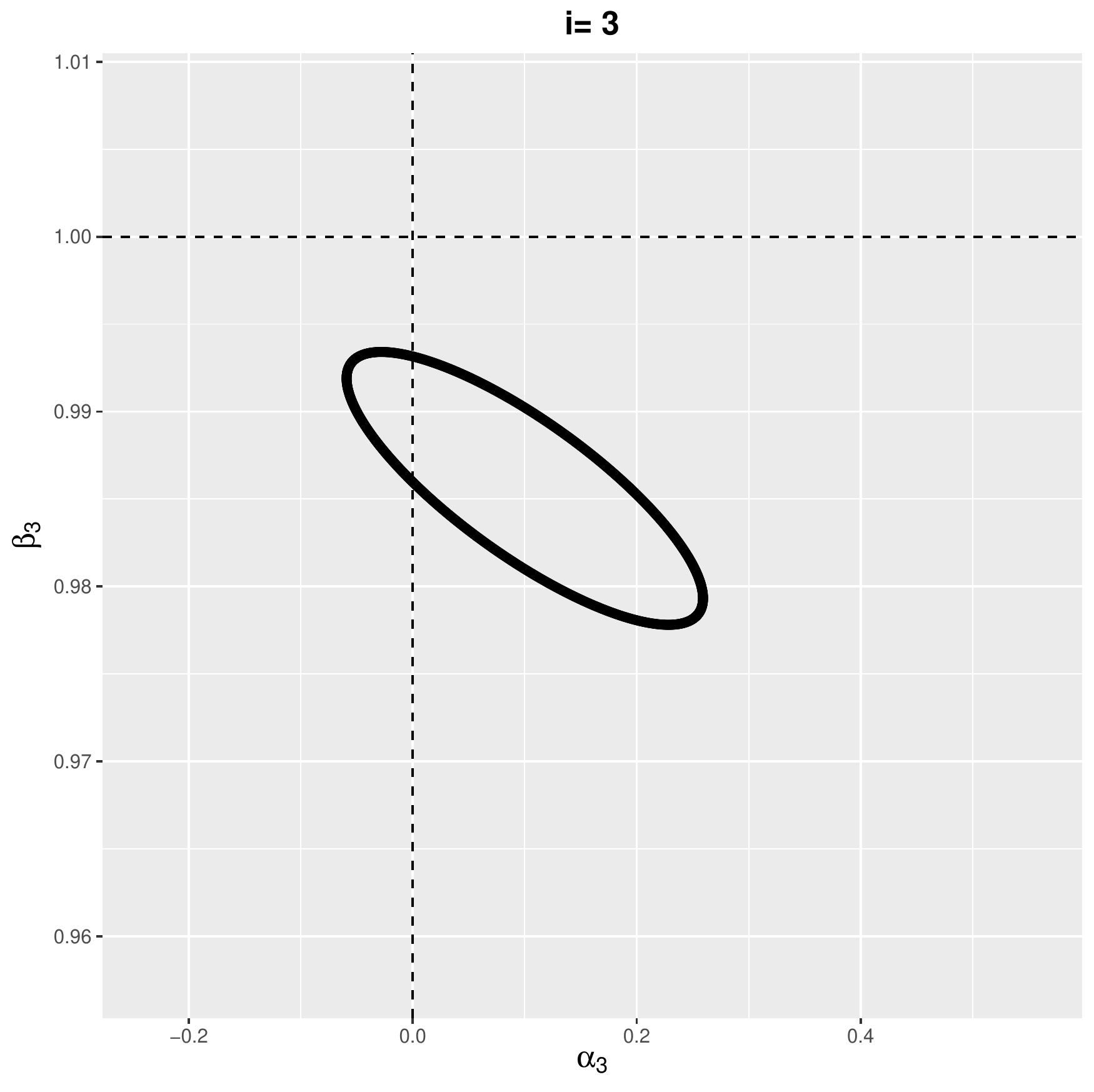}	\\
			\includegraphics[width=.4\textwidth]{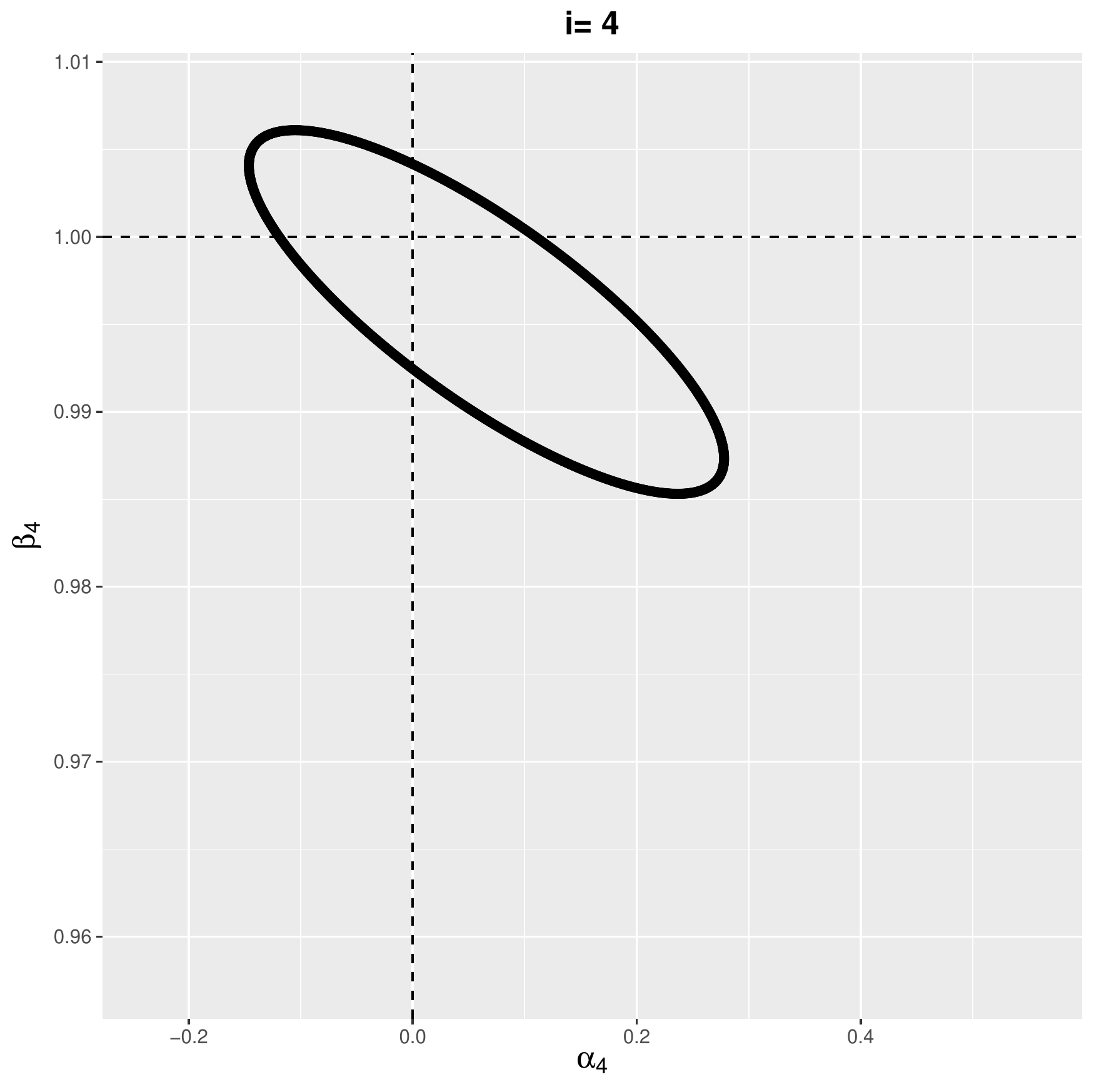}	&
			\includegraphics[width=.4\textwidth]{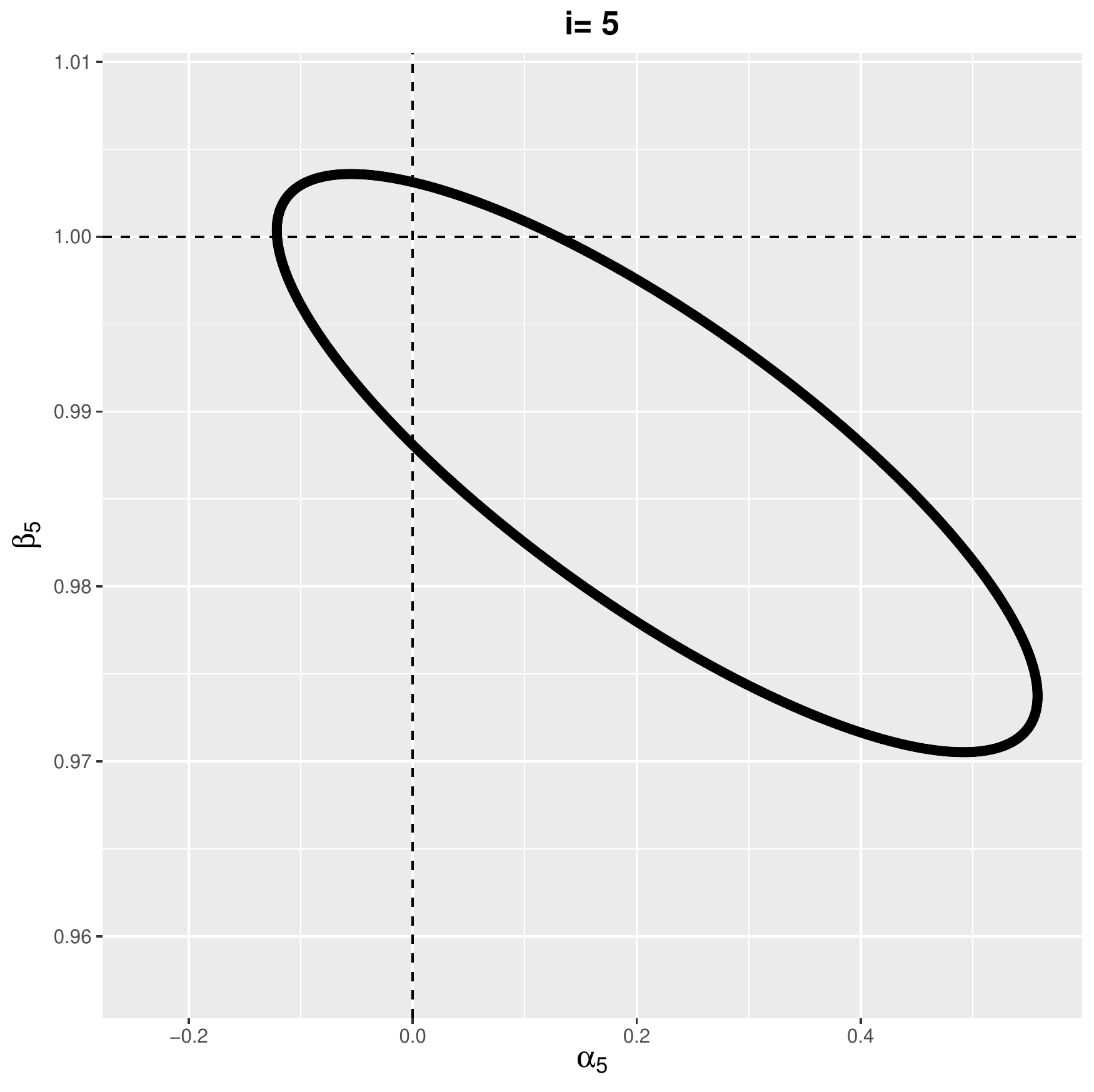}	\\	
				\includegraphics[width=.4\textwidth]{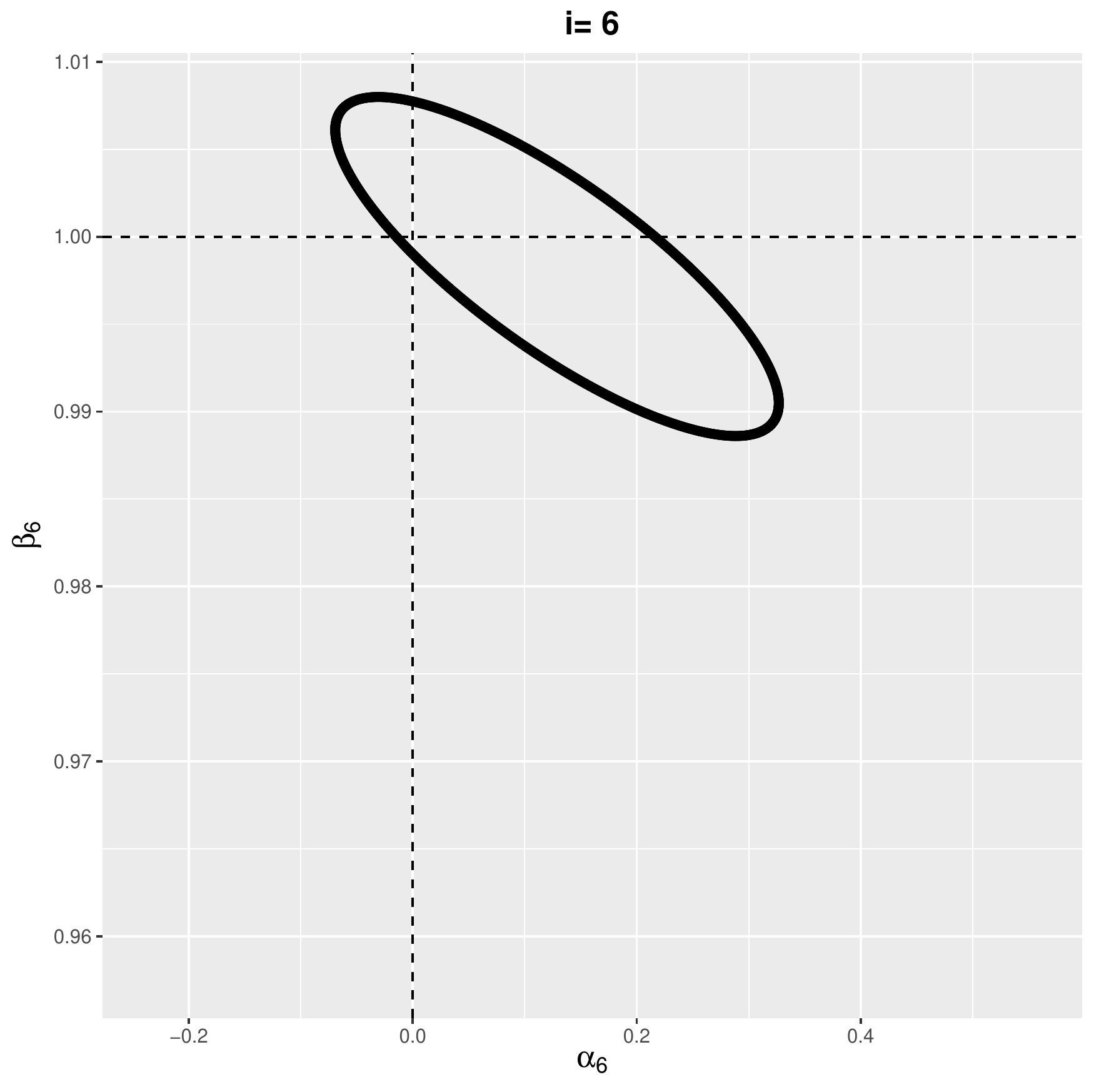}	&
			\includegraphics[width=.4\textwidth]{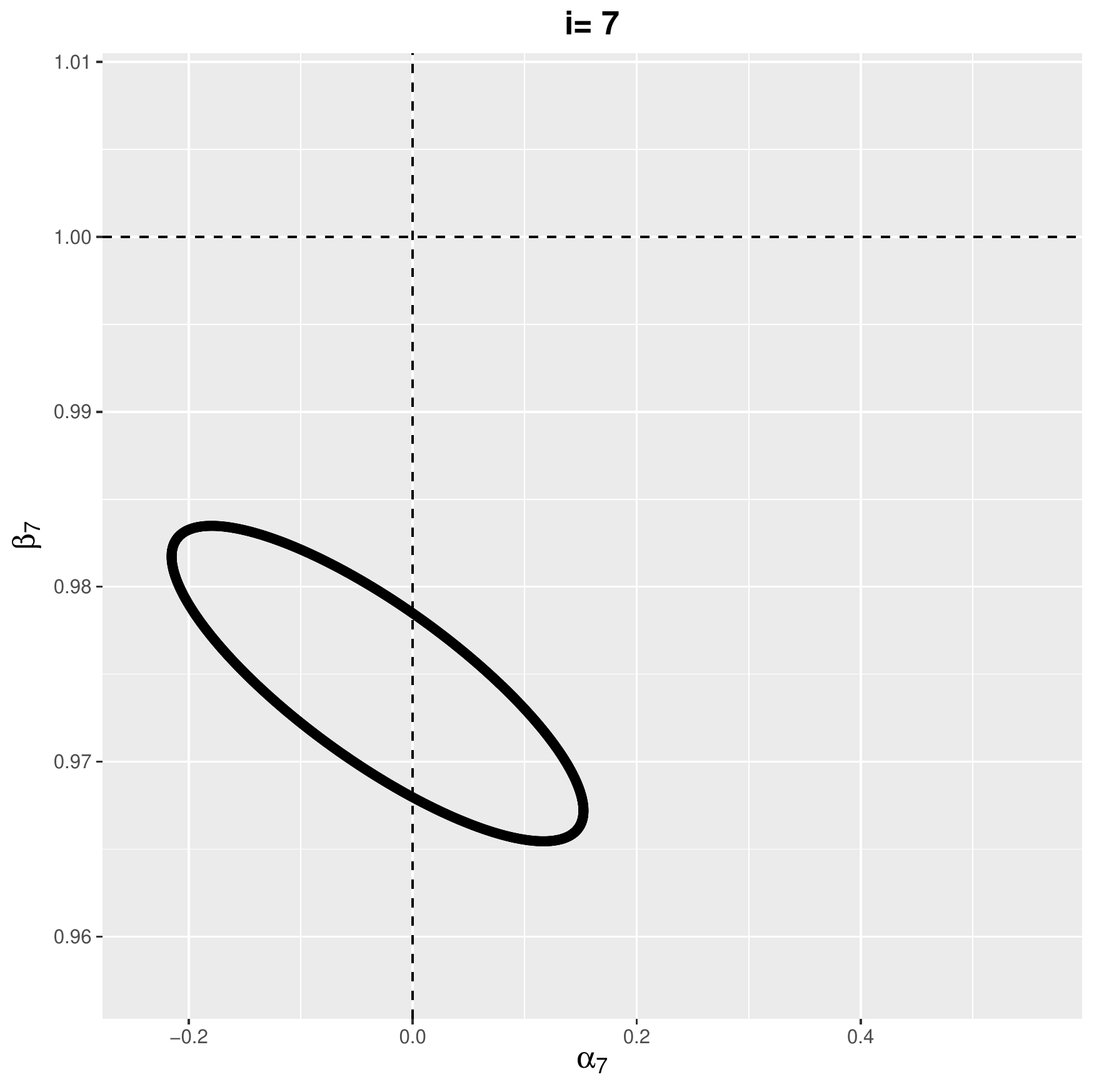}	\\		
			\includegraphics[width=.4\textwidth]{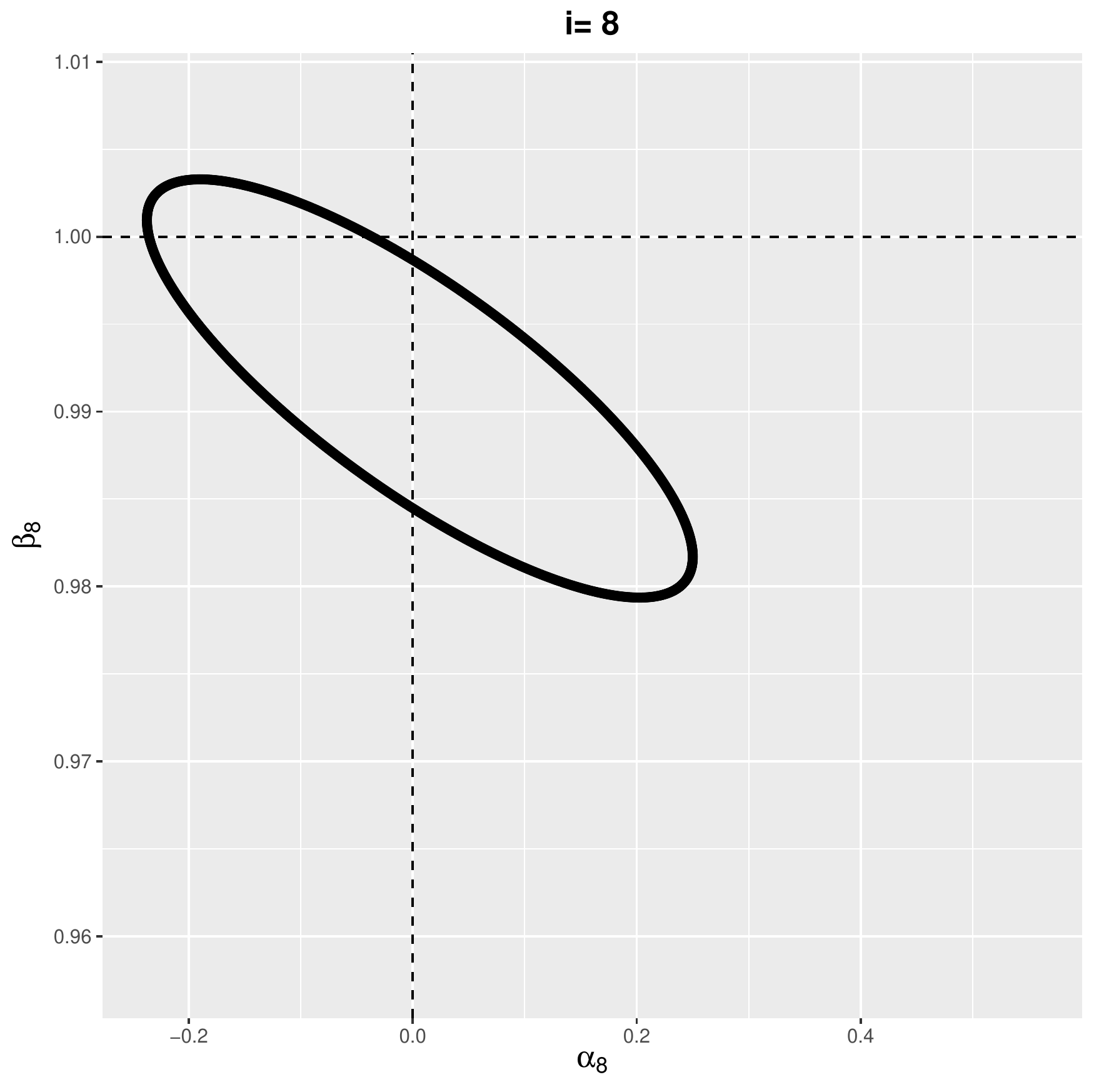}
		\end{tabular}
		\captionsetup{justification=centering}
		\caption{Joint confidence regions for the participant laboratories. } 	
		\label{fig:cookPerturba_1Ap_Multi_soUltimo_36142_only106_ld_52221elipse12562618aGgplot1}
	\end{figure}

\begin{table}[H]\centering
		%\captionsetup{font=scriptsize}
		\captionsetup{justification=centering}
		\caption{Wald test statistics, $Q_{w_i}$, for the hypothesis: $H_0:\alpha_i=0$, $\beta_i=1; i=2,\cdots,8$;\\ with respective $p-values$.}
		\label{principalIndividual1}
		\footnotesize
		\begin{tabular}{lcc ccc c c}\hline

	& \multicolumn{7}{c}{laboratories} \\
	\cmidrule(l){2-8}
i	&$2$&3&4&5&6&7&8\\
		%	\multicolumn{6}{c}{$H_0:\alpha_i=0$ e $\beta_i=1, i=2,\cdots,8$}
		%	\\\hline
			%$Q_{Wi}$& && &&& \\
		%	&	i=2&i=3&i=4&i=5 & i=6 & i=7 & i=8 \\\hline
			$Q_{Wi}$	&	517.267900 & 69.357334 &  1.968156 &  6.639442 & 10.940891 & 324.554420 & 17.563404
			\\
			% 517.27 & 69.36 & 1.97 & 6.64 & 10.94 & 324.55 & 17.56 \\
			$ p-value $ & 0.000000 & 0.000000 & 0.373784 & 0.036163 & 0.004209 & 0.000000 & 0.000153 \\
			$ p-value ~(Holm)$ & 0.000000 & 0.000000 & 0.373784 & 0.072326 & 0.012628 & 0.000000 & 0.000614 \\
			$ p-value  ~(Hochberg)$ & 0.000000 & 0.000000 & 0.373784 & 0.072326 & 0.012628 & 0.000000 & 0.000614 \\
			$ p-value~ (Hommel)$ & 0.000000 & 0.000000 & 0.373784 & 0.072326 & 0.012628 & 0.000000 & 0.000614 \\
			%	(0)&(0)&(0)&(0.388)&(0)&(0)&(0)&(0.625)&(0)&(0)&(0)&(0)\\
			\hline
		\end{tabular}
	\end{table}
	\normalsize

\section{Discussion}

In this work, we propose a strategy to evaluate proficiency testing results with multivariate response.  It is important to note that this is the most common case of proficiency testing. In general, the item under test is measured at different levels of values. Despite this, almost all statistical techniques used to analyze proficiency testing results consider only the univariate case. In addition, most of them do not use type B variation sources proposed by \cite{guide1995expression}.

To analyze the results of the proficiency test, we propose an ultrastructural measurement error model where the variance components are evaluated by a procedure described in \cite{guide1995expression}. As we have only one item under test, the true value is the same for all participating laboratories. This fact makes it impossible to apply the usual multivariate comparative calibration model (see, Gim\'enez and Patat (2014)), in which we have different items under test.

In the proposed ultrastructural model, there   is a natural dependency among all measurements at the same level of the item under test as described in Section 2. As a consequence, the observed Information matrix converges in probability to a random matrix. Another consequence of this dependency is that it is not possible to estimate the mean of the true value consistently, since the components of the asymptotic Information matrix with respect to the true mean value  are null. Then, to derive our strategy for comparing the results of participating laboratories, we had to develop a suitable asymptotic theory based on the smoothness of the likelihood function, as developed by\cite{weiss1971asymptotic}, \cite{weiss1973asymptotic} and \cite{sweeting1980uniform}.

Due to the fact that the asymptotic information matrix $(W)$ is random, it is not possible to apply Slutsky's theorem to check the convergence in distribution of the transformation of the score function sequence and the observed information matrix sequence, which is necessary to develop the usual asymptotic theory. To address this problem, we derived the asymptotic joint distribution of the score function and the observed Fisher information matrix (see, Theorem \ref{Score_limit}). Next, we used the smoothness of the likelihood function and the continuous transformation theorem to arrive at the asymptotic distribution of Wald's statistic. A curious point is the fact that the maximum likelihood estimator has no asymptotic normal distribution (see, Equation \ref{fan_MLE}).

To assess the behavior of asymptotic results, a  simulation study was developed. In general, we conclude that the performance of the asymptotic results are closely related to the sample size and the magnitude of the variance components. Even with small sample size, the empirical and nominal values of the  significance levels are close (see, Tables \ref{yearlyprivatizatio01311212030320202n} and  \ref{yearlyprivatizatio0131na}). Moreover, the empirical power funcion has the same behavior (see, Figures \ref{poderTodos} %\ref{poderTodoSigmaijb},
%\ref{fig:cookPerturba.1Ap.M2058040320202562618}
and
\ref{fig:cookPerturba.1Ap.Multi.soUltimo.36142.only106.ld.52221elipse125620303032020618} and Figure 1 in Online Resource- Section 3). As the variance components of the reference laboratory are known before the start of the PT program, we can use the empirical power function to estimate the sample size.

To illustrate the developed methodology, we analyzed the results of the proficiency test for the power measurements of an engine in the Application Section. In the real data set considered here, we have 8 laboratories including the reference laboratory. At the beginning of the program, the reference laboratory evaluated the stability of the engine under test and determined the component of variance related to the true value. To ensure comparability of results, the reference laboratory measured the engine at the beginning and end of the PT program. Each participating laboratory reported its measurements and respective uncertainties (Type A and Type B). In the sequel, the statistical coordinator of the PT program compared the results of the participant laboratories with the reference laboratory.

The results of the participant laboratories were compared with the results of the reference laboratory using Wald statistics, as presented in section 4. Initially, we considered the test given in (\ref{eqWald11}) to assess equivalence among laboratories measurements. As we rejected the hypothesis of equivalence among laboratories measurement, we compared the results of each participant laboratory with the reference value. For this, we proposed a joint confidence region for the bias parameters related to respective participant laboratory (see, Figure \ref{fig:cookPerturba_1Ap_Multi_soUltimo_36142_only106_ld_52221elipse12562618a}). Based on the joint confidence region, it was possible to assess the consistency of the results of the participant laboratory with respect to the reference laboratory. Moreover, if the participant laboratory results are not consistent, we can identify which bias parameters are significant.

In summary, the measurement results comparison strategy proposed in this work can be applied in any situation where participant laboratories measure the same item and we have a reference laboratory to compare the results.

\section{acknowledgements}
The research was financed in part by the Coordenação de Aperfeiçoamento de Pessoal de Nível Superior - Brasil (CAPES) - Finance Code 001.
\vspace{5mm}
\medskip
\medskip

\noindent
{\Large{\bf Appendix A - Engine Power Data Set}}
\medskip
\medskip
\medskip

In this Section we present the data set used to illustrate the developed methodology which
consists of the measurements of the power of the engine in $9$
points of rotation by $8$ laboratories.
%The participant laboratories were Instituto Mau\'a de Tecnologia,
%Robert	Bosch, Magneti Marelli, Ford, KS Pist\~oes, Mahle,	Delphi,
%and GM.
 We are not going
to identify the
%number of
laboratories in the data set,
%with the names of the laboratories given above,
as it is confidential.
\medskip
\medskip

%\scriptsize
	\begin{center}
		\footnotesize
		\begin{longtable}{ccc ccc ccc ccc cc}
			\captionsetup{justification=centering}  %anhadido por juan pablo
			\caption[]{Engine Power Data Set.}%\label{tab:Escova_de_dentes_experimental_dados_apendice}\\
			% \hline
			%\midrule[1pt]
			%Número  & $x_{e_a}$ & $y_{e_d}$ \\
			%\midrule[1pt]
			\endfirsthead
			\multicolumn{14}{c}%
			{{ \tablename\ \thetable{}: Engine Power Data Set.}} \\
			%\hline
			\midrule[1pt]
			%\multicolumn{1}{c}{Número} & \multicolumn{1}{c}{$x_{e_a}$} & \multicolumn{1}{c}{$y_{e_d}$} \\
			%\hline
			\midrule[1pt]
			\endhead
			%\hline
			%\multicolumn{14}{r}{{Continua na próxima página.}} \\ %\hline
			\midrule[1pt]
			\endfoot
			%\hline \hline
			\toprule[1.5pt]
			\endlastfoot
			& $Y_{1j1}$ & $Y_{1j2}$ & $Y_{1j3}$ & $Y_{1j4}$ & $Y_{1j5}$ & $Y_{2j1}$ & $Y_{2j2}$ & $Y_{2j3}$ & $Y_{2j4}$ & $Y_{2j5}$ & $Y_{2j6}$ & $Y_{2j7}$ & $Y_{2j8}$ \\
			\hline
			1200 & 8.89 & 8.83 & 8.86 & 8.85 & 8.88 & 8.57 & 8.56 & 8.58 & 8.64 & 8.64 & 8.60 & 8.59 & 8.56 \\
			2000 & 15.84 & 15.80 & 15.80 & 15.79 & 15.81 & 15.28 & 15.27 & 15.35 & 15.36 & 15.38 & 15.29 & 15.24 & 15.27 \\
			3000 & 26.84 & 26.61 & 26.86 & 26.85 & 26.92 & 26.12 & 26.12 & 26.09 & 26.12 & 26.22 & 26.12 & 25.95 & 26.14 \\
			3600 & 31.41 & 31.31 & 31.40 & 31.40 & 31.50 & 30.48 & 30.45 & 30.41 & 30.52 & 30.48 & 30.41 & 30.25 & 30.32 \\
			4400 & 37.19 & 37.12 & 37.24 & 37.17 & 37.32 & 36.46 & 36.33 & 36.25 & 36.36 & 36.45 & 36.41 & 36.21 & 36.20 \\
			5200 & 44.35 & 44.35 & 44.28 & 44.31 & 44.40 & 43.29 & 43.22 & 43.11 & 43.21 & 43.34 & 43.28 & 43.05 & 43.05 \\
			5600 & 47.49 & 47.33 & 47.56 & 47.60 & 47.79 & 46.29 & 46.21 & 46.11 & 46.17 & 46.18 & 45.94 & 46.01 & 45.97 \\
			6000 & 49.92 & 49.76 & 49.98 & 49.90 & 49.96 & 48.01 & 47.92 & 47.90 & 48.00 & 47.88 & 47.80 & 47.68 & 47.71 \\
			6400 & 50.92 & 50.74 & 50.89 & 50.84 & 50.94 & 48.85 & 48.84 & 48.75 & 48.86 & 48.85 & 48.66 & 48.57 & 48.45 \\
			\hline
			& $Y_{2j9}$ & $Y_{2j10}$ & $Y_{2j11}$ & $Y_{2j12}$ & $Y_{2j13}$ & $Y_{2j14}$ & $Y_{2j15}$ & $Y_{2j16}$ & $Y_{2j17}$ & $Y_{2j18}$ & $Y_{2j19}$ & $Y_{2j20}$ & $Y_{2j21}$ \\
			\hline
			1200 & 8.57 & 8.64 & 8.59 & 8.60 & 8.67 & 8.72 & 8.79 & 8.79 & 8.79 & 8.73 & 8.60 & 8.60 & 8.46 \\
			2000 & 15.31 & 15.34 & 15.32 & 15.37 & 15.35 & 15.57 & 15.60 & 15.60 & 15.54 & 15.53 & 15.26 & 15.26 & 15.10 \\
			3000 & 26.06 & 25.93 & 26.03 & 26.08 & 26.13 & 26.34 & 26.27 & 26.25 & 26.38 & 26.49 & 26.05 & 26.19 & 25.76 \\
			3600 & 30.42 & 30.52 & 30.45 & 30.44 & 30.65 & 30.91 & 30.93 & 30.97 & 30.94 & 30.86 & 30.26 & 30.57 & 29.98 \\
			4400 & 36.16 & 36.33 & 36.17 & 36.17 & 36.34 & 36.58 & 36.44 & 36.52 & 36.82 & 36.66 & 36.23 & 36.32 & 35.80 \\
			5200 & 42.98 & 43.11 & 43.06 & 43.06 & 43.21 & 43.45 & 43.37 & 43.50 & 43.65 & 43.65 & 43.23 & 43.20 & 42.54 \\
			5600 & 46.08 & 46.03 & 46.19 & 46.29 & 46.58 & 46.64 & 46.69 & 46.71 & 46.73 & 46.03 & 46.03 & 45.34 & 45.36 \\
			6000 & 47.64 & 47.90 & 47.79 & 47.85 & 47.99 & 48.13 & 48.35 & 48.41 & 48.51 & 48.41 & 47.60 & 47.67 & 46.93 \\
			6400 & 48.58 & 48.66 & 48.86 & 48.81 & 49.00 & 49.09 & 49.29 & 49.33 & 49.42 & 49.33 & 48.44 & 48.38 & 47.79 \\
			\hline
			& $Y_{2j22}$ & $Y_{2j23}$ & $Y_{3j1}$ & $Y_{3j2}$ & $Y_{3j3}$ & $Y_{3j4}$ & $Y_{3j5}$ & $Y_{3j6}$ & $Y_{3j7}$ & $Y_{3j8}$ & $Y_{3j9}$ & $Y_{3j10}$ & $Y_{3j11}$ \\
			\hline
			1200 & 8.47 & 8.52 & 8.85 & 8.76 & 8.77 & 8.80 & 8.77 & 8.77 & 8.78 & 8.78 & 8.76 & 8.77 & 8.77 \\
			2000 & 15.05 & 15.18 & 15.85 & 15.78 & 15.81 & 15.71 & 15.76 & 15.75 & 15.76 & 15.75 & 15.73 & 15.76 & 15.73 \\
			3000 & 25.61 & 25.78 & 26.68 & 26.68 & 26.68 & 26.52 & 26.59 & 26.61 & 26.52 & 26.50 & 26.61 & 26.56 & 26.73 \\
			3600 & 29.90 & 30.13 & 31.33 & 31.22 & 31.23 & 31.21 & 31.27 & 31.25 & 31.28 & 31.26 & 31.31 & 31.26 & 31.25 \\
			4400 & 35.67 & 35.73 & 37.02 & 36.88 & 36.93 & 36.89 & 36.89 & 36.88 & 36.92 & 36.94 & 36.93 & 36.91 & 36.91 \\
			5200 & 42.56 & 42.56 & 43.90 & 43.81 & 43.88 & 43.75 & 43.79 & 43.74 & 43.80 & 43.86 & 43.82 & 43.84 & 43.82 \\
			5600 & 45.55 & 45.70 & 47.00 & 46.99 & 47.01 & 46.93 & 46.99 & 46.94 & 46.99 & 47.06 & 47.05 & 47.03 & 47.02 \\
			6000 & 46.84 & 47.20 & 48.78 & 48.81 & 48.87 & 48.86 & 48.83 & 48.79 & 48.81 & 48.90 & 48.86 & 48.84 & 48.82 \\
			6400 & 47.71 & 48.01 & 49.86 & 49.87 & 49.91 & 49.88 & 49.89 & 49.85 & 49.90 & 49.92 & 49.90 & 49.88 & 49.85 \\
			\hline
			& $Y_{3j12}$ & $Y_{3j13}$ & $Y_{3j14}$ & $Y_{3j15}$ & $Y_{3j16}$ & $Y_{3j17}$ & $Y_{3j18}$ & $Y_{4j1}$ & $Y_{4j2}$ & $Y_{4j3}$ & $Y_{4j4}$ & $Y_{4j5}$ & $Y_{4j6}$ \\
			\hline
			1200 & 8.84 & 8.79 & 8.79 & 8.80 & 8.81 & 8.89 & 8.81 & 8.73 & 8.84 & 8.85 & 8.77 & 8.78 & 8.88 \\
			2000 & 15.81 & 15.78 & 15.81 & 15.82 & 15.74 & 15.82 & 15.82 & 15.68 & 15.88 & 15.89 & 15.82 & 16.00 & 16.00 \\
			3000 & 26.81 & 26.72 & 26.76 & 26.73 & 26.77 & 26.82 & 26.82 & 26.60 & 26.91 & 26.82 & 26.84 & 27.04 & 27.06 \\
			3600 & 31.29 & 31.28 & 31.33 & 31.15 & 31.26 & 31.26 & 31.28 & 31.65 & 31.49 & 31.49 & 31.31 & 31.51 & 31.60 \\
			4400 & 36.90 & 36.86 & 36.87 & 36.88 & 36.95 & 36.98 & 36.96 & 37.51 & 37.33 & 37.32 & 37.13 & 37.31 & 37.35 \\
			5200 & 43.83 & 43.80 & 43.78 & 43.78 & 43.88 & 43.87 & 43.80 & 44.37 & 44.29 & 44.29 & 44.05 & 44.23 & 44.25 \\
			5600 & 47.06 & 46.98 & 46.96 & 46.99 & 47.06 & 47.09 & 47.00 & 47.44 & 47.43 & 47.47 & 47.25 & 47.30 & 47.32 \\
			6000 & 48.88 & 48.78 & 48.76 & 48.78 & 48.86 & 48.88 & 48.74 & 49.30 & 49.31 & 49.36 & 49.13 & 49.19 & 49.14 \\
			6400 & 49.84 & 49.76 & 49.73 & 49.75 & 49.83 & 49.85 & 49.73 & 50.20 & 50.24 & 50.23 & 50.05 & 50.09 & 50.08 \\
			\hline
			& $Y_{4j7}$ & $Y_{4j8}$ & $Y_{4j9}$ & $Y_{5j1}$ & $Y_{5j2}$ & $Y_{5j3}$ & $Y_{5j4}$ & $Y_{5j5}$ & $Y_{5j6}$ & $Y_{5j7}$ & $Y_{5j8}$ & $Y_{5j9}$ & $Y_{5j10}$ \\
			\hline
			1200 & 8.90 & 8.92 & 8.93 & 8.70 & 8.68 & 9.13 & 9.07 & 9.04 & 9.03 & 8.82 & 8.75 & 8.92 & 8.95 \\
			2000 & 15.94 & 16.00 & 16.05 & 15.46 & 15.50 & 16.24 & 16.35 & 16.14 & 16.22 & 15.58 & 15.58 & 16.05 & 15.96 \\
			3000 & 26.85 & 26.93 & 26.96 & 25.94 & 25.91 & 27.24 & 27.41 & 27.38 & 27.41 & 26.16 & 25.96 & 27.18 & 27.05 \\
			3600 & 31.56 & 31.56 & 31.60 & 30.71 & 30.85 & 32.15 & 32.27 & 32.26 & 32.26 & 30.99 & 30.83 & 31.95 & 31.87 \\
			4400 & 37.32 & 37.36 & 37.45 & 35.93 & 35.92 & 38.02 & 38.10 & 37.99 & 38.16 & 36.54 & 36.26 & 37.59 & 37.57 \\
			5200 & 44.34 & 44.40 & 44.39 & 42.47 & 42.56 & 45.16 & 45.27 & 44.98 & 45.28 & 43.08 & 42.92 & 44.67 & 44.70 \\
			5600 & 47.54 & 47.52 & 47.57 & 45.65 & 45.70 & 48.38 & 48.40 & 48.13 & 48.40 & 46.12 & 45.94 & 47.82 & 47.96 \\
			6000 & 49.41 & 49.44 & 49.43 & 47.30 & 47.42 & 49.95 & 50.08 & 49.89 & 50.36 & 47.71 & 47.63 & 49.51 & 49.40 \\
			6400 & 50.33 & 50.35 & 50.34 & 48.11 & 48.19 & 50.76 & 50.24 & 50.49 & 50.51 & 48.51 & 48.33 & 49.99 & 50.38 \\
			\hline
			& $Y_{5j11}$ & $Y_{5j12}$ & $Y_{6j1}$ & $Y_{6j2}$ & $Y_{6j3}$ & $Y_{6j4}$ & $Y_{6j5}$ & $Y_{6j6}$ & $Y_{6j7}$ & $Y_{6j8}$ & $Y_{6j9}$ & $Y_{6j10}$ & $Y_{6j11}$ \\
			\hline
			1200 & 8.85 & 8.87 & 9.00 & 8.90 & 8.90 & 9.00 & 9.00 & 8.90 & 8.90 & 8.90 & 9.00 & 8.90 & 8.90 \\
			2000 & 15.94 & 15.95 & 16.00 & 15.90 & 16.10 & 16.10 & 16.10 & 16.00 & 16.00 & 15.90 & 16.10 & 16.00 & 16.00 \\
			3000 & 26.81 & 26.84 & 27.10 & 27.00 & 27.10 & 27.10 & 27.10 & 27.10 & 27.00 & 27.00 & 27.30 & 27.10 & 27.10 \\
			3600 & 31.73 & 31.67 & 31.70 & 31.60 & 31.70 & 31.70 & 31.80 & 31.80 & 31.50 & 31.60 & 31.80 & 31.70 & 31.70 \\
			4400 & 37.41 & 37.58 & 37.60 & 37.50 & 37.70 & 37.70 & 37.70 & 37.60 & 37.50 & 37.60 & 37.90 & 37.70 & 37.60 \\
			5200 & 44.18 & 44.15 & 44.80 & 44.60 & 44.90 & 44.70 & 44.80 & 44.80 & 44.60 & 44.60 & 44.90 & 44.90 & 44.70 \\
			5600 & 47.39 & 47.41 & 47.60 & 47.40 & 47.70 & 47.70 & 47.60 & 47.80 & 47.30 & 47.30 & 47.80 & 47.90 & 47.50 \\
			6000 & 49.23 & 49.33 & 49.10 & 49.10 & 49.30 & 49.40 & 49.30 & 49.30 & 49.00 & 49.00 & 49.40 & 49.50 & 49.20 \\
			6400 & 49.68 & 49.72 & 49.80 & 49.90 & 50.00 & 50.10 & 50.20 & 50.10 & 49.70 & 49.70 & 50.20 & 50.20 & 49.90 \\
			\hline
			& $Y_{6j12}$ & $Y_{6j13}$ & $Y_{6j14}$ & $Y_{6j15}$ & $Y_{6j16}$ & $Y_{7j1}$ & $Y_{7j2}$ & $Y_{7j3}$ & $Y_{7j4}$ & $Y_{7j5}$ & $Y_{7j6}$ & $Y_{7j7}$ & $Y_{7j8}$ \\
			\hline
			1200 & 8.90 & 8.90 & 8.90 & 8.90 & 8.90 & 8.70 & 8.70 & 8.50 & 8.50 & 8.50 & 8.60 & 8.60 & 8.60 \\
			2000 & 16.00 & 16.10 & 16.00 & 16.00 & 16.00 & 15.60 & 15.60 & 15.40 & 15.40 & 15.40 & 15.40 & 15.50 & 15.40 \\
			3000 & 27.10 & 27.20 & 27.10 & 27.00 & 27.10 & 26.10 & 25.80 & 25.60 & 25.60 & 25.60 & 25.70 & 26.00 & 25.80 \\
			3600 & 31.70 & 31.80 & 31.70 & 31.60 & 31.70 & 30.70 & 30.60 & 30.20 & 30.10 & 30.40 & 30.60 & 30.50 & 30.50 \\
			4400 & 37.60 & 37.70 & 37.60 & 37.70 & 37.70 & 36.50 & 36.60 & 36.30 & 36.30 & 36.40 & 36.50 & 36.60 & 36.50 \\
			5200 & 44.70 & 44.90 & 44.70 & 44.80 & 44.70 & 43.80 & 43.60 & 43.40 & 43.10 & 42.80 & 43.30 & 43.50 & 43.50 \\
			5600 & 47.60 & 47.80 & 47.80 & 47.50 & 47.50 & 46.20 & 46.10 & 46.00 & 45.70 & 46.40 & 46.60 & 46.50 & 46.60 \\
			6000 & 49.30 & 49.50 & 49.50 & 49.20 & 49.10 & 48.70 & 48.30 & 47.70 & 48.10 & 47.40 & 48.30 & 48.30 & 48.50 \\
			6400 & 50.10 & 50.20 & 50.20 & 50.00 & 49.90 & 49.70 & 49.60 & 48.60 & 49.00 & 49.10 & 49.10 & 49.40 & 49.40 \\
			\hline
			& $Y_{7j9}$ & $Y_{7j10}$ & $Y_{7j11}$ & $Y_{7j12}$ & $Y_{7j13}$ & $Y_{7j14}$ & $Y_{7j15}$ & $Y_{7j16}$ & $Y_{7j17}$ & $Y_{7j18}$ & $Y_{7j19}$ & $Y_{7j20}$ & $Y_{7j21}$ \\
			\hline
			1200 & 8.60 & 8.60 & 8.60 & 8.60 & 8.60 & 8.60 & 8.70 & 8.60 & 8.60 & 8.60 & 8.50 & 8.60 & 8.60 \\
			2000 & 15.40 & 15.50 & 15.40 & 15.50 & 15.40 & 15.50 & 15.60 & 15.50 & 15.40 & 15.50 & 15.50 & 15.40 & 15.40 \\
			3000 & 25.80 & 25.90 & 26.00 & 25.70 & 25.70 & 25.90 & 25.90 & 25.90 & 26.00 & 25.80 & 25.90 & 25.80 & 25.80 \\
			3600 & 30.60 & 30.50 & 30.40 & 30.40 & 30.40 & 30.60 & 30.60 & 30.50 & 30.50 & 30.60 & 30.60 & 30.60 & 30.60 \\
			4400 & 36.60 & 36.50 & 36.60 & 36.60 & 36.70 & 36.70 & 36.60 & 36.70 & 36.70 & 36.70 & 36.60 & 36.60 & 36.70 \\
			5200 & 43.60 & 44.00 & 44.20 & 43.50 & 43.50 & 43.70 & 43.70 & 43.70 & 43.80 & 43.70 & 43.70 & 43.70 & 43.60 \\
			5600 & 47.00 & 47.50 & 46.70 & 46.70 & 46.60 & 46.80 & 46.50 & 47.00 & 46.50 & 46.60 & 46.10 & 46.80 & 46.70 \\
			6000 & 48.30 & 49.20 & 49.10 & 48.60 & 48.40 & 48.50 & 48.60 & 48.50 & 48.70 & 48.50 & 48.70 & 48.40 & 48.50 \\
			6400 & 49.30 & 50.00 & 50.20 & 49.30 & 49.20 & 49.40 & 49.10 & 49.10 & 49.30 & 49.50 & 49.40 & 49.40 & 49.10 \\
			\hline
			& $Y_{7j22}$ & $Y_{7j23}$ & $Y_{7j24}$ & $Y_{7j25}$ & $Y_{7j26}$ & $Y_{8j1}$ & $Y_{8j2}$ & $Y_{8j3}$ & $Y_{8j4}$ & $Y_{8j5}$ & $Y_{8j6}$ & $Y_{8j7}$ & $Y_{8j8}$ \\
			\hline
			1200 & 8.70 & 8.60 & 8.60 & 8.60 & 8.60 & 8.70 & 8.70 & 8.60 & 8.60 & 8.80 & 8.70 & 8.70 & 8.60 \\
			2000 & 15.50 & 15.50 & 15.50 & 15.50 & 15.40 & 16.00 & 15.80 & 15.80 & 15.80 & 16.00 & 15.80 & 15.80 & 15.90 \\
			3000 & 25.90 & 25.80 & 25.50 & 25.60 & 25.60 & 27.10 & 27.00 & 26.70 & 26.70 & 27.10 & 27.10 & 26.90 & 26.90 \\
			3600 & 30.10 & 30.30 & 30.40 & 30.40 & 30.30 & 31.80 & 31.70 & 31.50 & 31.50 & 31.70 & 31.80 & 31.70 & 31.70 \\
			4400 & 36.70 & 36.50 & 36.20 & 36.30 & 36.30 & 37.50 & 37.40 & 36.90 & 37.00 & 37.40 & 37.40 & 36.80 & 37.00 \\
			5200 & 43.70 & 43.60 & 42.70 & 43.00 & 43.20 & 44.10 & 44.10 & 43.50 & 43.70 & 44.00 & 44.10 & 43.50 & 43.60 \\
			5600 & 46.60 & 45.80 & 46.30 & 46.50 & 46.20 & 47.00 & 47.10 & 46.60 & 46.80 & 46.80 & 47.00 & 46.60 & 46.80 \\
			6000 & 48.50 & 48.40 & 47.90 & 47.70 & 48.30 & 48.80 & 48.70 & 48.20 & 48.50 & 48.60 & 48.80 & 48.40 & 48.70 \\
			6400 & 49.40 & 49.30 & 49.10 & 49.30 & 49.10 & 49.80 & 49.50 & 49.10 & 49.40 & 49.60 & 49.70 & 49.10 & 49.40 \\
			\hline
			& $Y_{8j9}$ & $Y_{8j10}$ & $Y_{8j11}$ & $Y_{8j12}$ & $Y_{8j13}$ & $Y_{8j14}$ & $Y_{8j15}$ & $Y_{8j16}$ \\
			\hline
			1200 & 8.80 & 8.70 & 8.80 & 8.70 & 8.70 & 8.70 & 8.80 & 8.70 \\
			2000 & 16.00 & 15.90 & 16.00 & 16.00 & 15.90 & 15.90 & 16.00 & 15.80 \\
			3000 & 27.20 & 27.30 & 27.00 & 26.90 & 26.60 & 26.60 & 26.50 & 26.30 \\
			3600 & 31.90 & 31.90 & 31.90 & 31.80 & 31.70 & 31.60 & 31.50 & 31.50 \\
			4400 & 37.50 & 37.50 & 37.30 & 37.20 & 37.30 & 37.10 & 36.90 & 36.90 \\
			5200 & 44.30 & 44.40 & 43.80 & 43.90 & 44.00 & 43.80 & 43.50 & 43.50 \\
			5600 & 47.10 & 47.20 & 47.20 & 47.30 & 47.10 & 47.10 & 46.80 & 46.90 \\
			6000 & 49.00 & 49.10 & 49.10 & 49.20 & 49.00 & 49.00 & 48.60 & 48.60 \\
			6400 & 49.70 & 49.70 & 49.60 & 50.00 & 49.60 & 49.70 & 49.30 & 49.40 \\
		\end{longtable}
		\normalsize
	\end{center}
	\normalsize

\begin{table}[H]\centering
		%\captionsetup{font=scriptsize}
		\caption{Variances of the true engine power measurements ($\sxj$)
at the $jth$ engine rotation value.
}
		%\label{EstimativasMaximaVeross28011200}
		%\scriptsize
		\begin{tabular}{rrrrrrrrr}
			\hline
			$ \sigma^2_{x_1}$ & $ \sigma^2_{x_2}$ & $ \sigma^2_{x_3}$ & $ \sigma^2_{x_4}$ & $ \sigma^2_{x_5}$ & $ \sigma^2_{x_6}$ & $ \sigma^2_{x_7}$ & $ \sigma^2_{x_8}$ & $ \sigma^2_{x_9}$ \\
			\hline
			0.0077 & 0.0256 & 0.0740 & 0.0999 & 0.1414 & 0.2007 & 0.2266 & 0.2500 & 0.2581 \\
			\hline
		\end{tabular}
	\end{table}
	%\end{center}
	\normalsize

\begin{table}[H]\centering
		%\captionsetup{font=scriptsize}
		\caption{Measurement error variances ($\sij$) for the $ith$
laboratory at the $jth$ engine rotation value. }
		%\label{EstimativasMaximaVeross28011200}
		%\scriptsize
		\begin{tabular}{r|rrrrrrrrr}
			\hline
			& $j=1$ & $j=2$ & $j=3$ & $j=4$ & $j=5$ & $j=6$ & $j=7$ & $j=8$ & $j=9$ \\
			\hline
			$i=1$ & 0.0068 & 0.0215 & 0.0618 & 0.0848 & 0.1190 & 0.1690 & 0.1944 & 0.2141 & 0.2225 \\
			$i=2$ & 0.0054 & 0.0170 & 0.0491 & 0.0671 & 0.0949 & 0.1343 & 0.1535 & 0.1650 & 0.1711 \\
			$i=3$ & 0.0005 & 0.0018 & 0.0050 & 0.0069 & 0.0097 & 0.0136 & 0.0157 & 0.0169 & 0.0176 \\
			$i=4$ & 0.0081 & 0.0263 & 0.0750 & 0.1031 & 0.1446 & 0.2035 & 0.2333 & 0.2521 & 0.2615 \\
			$i=5$ & 0.0498 & 0.1587 & 0.4509 & 0.6270 & 0.8680 & 1.2158 & 1.3936 & 1.4954 & 1.5341 \\
			$i=6$ & 0.0101 & 0.0327 & 0.0935 & 0.1280 & 0.1806 & 0.2552 & 0.2888 & 0.3091 & 0.3186 \\
			$i=7$ & 0.0114 & 0.0372 & 0.1029 & 0.1435 & 0.2061 & 0.2919 & 0.3307 & 0.3591 & 0.3760 \\
			$i=8$ & 0.0249 & 0.0830 & 0.2371 & 0.3300 & 0.4543 & 0.6319 & 0.7243 & 0.7811 & 0.8060 \\
			\hline
		\end{tabular}
	\end{table}
	%\end{center}
	\normalsize

\newpage
\noindent
\medskip
\noindent{\Large{\bf Appendix B }}
\vspace{5mm}

\noindent
\medskip
{\large{\bf B.1}: Observed Information Matrix: $\frac{J^n({\bm\theta})}{n}$}.

\vspace{3mm}
After algebraic manipulations, the elements of the observed information matrix,
$\frac{1}{n} J^n(\mbox{\boldmath$\theta$})=-\frac{1}{n}\frac{\partial^2
L^n(\mbox{\boldmath$\theta$})}{\partial\mbox{\boldmath$\theta$}\partial\mbox{\boldmath$\theta$}^T}$
with
$\mbox{\boldmath$\theta$}=(\mu_{x_1},\cdots,\mu_{x_m},\alpha_2,\cdots,\alpha_p,\beta_2,\cdots,\beta_p)^T=(\theta_1,\cdots,\theta_{m+2(p-1)})^T$,
$J^n_{\theta_r,\theta_h}, r,h=1,\cdots,m+2(p-1)$
were obtained and are given by:

\vspace{3mm}
\noindent
$
%-\frac{\partial^2 L(\mbox{\boldmath$\theta$})}{\partial \mu_{x_j}	\partial\mu_{x_j}}=
J^n_{\mu_{x_j},\mu_{x_j}}=-\frac{(1-a_j^n)}{\sxj a_j^n},~
%-\frac{\partial^2 L(\mbox{\boldmath$\theta$})}{\partial	\mu_{x_j} \partial \mu_{x_l}}=
J^n_{\mu_{x_j},\mu_{x_q}}=0, ~J^n_{\mu_{x_j},\alpha_i}=
\frac{n_i \beta_i}{\sij a_j^n},~J^n_{\mu_{x_j},\beta_i}=
\frac{1}{\sij a_j^n}\lt(\frac{2 n_i \beta_i \sxj
	M_j^n}{a_j^n}-D_{ij}^n\rt),
$

\vspace{3mm}
\noindent
$
%-\frac{\partial^2 L(\mbox{\boldmath$\theta$})}{\partial\alpha_i	\partial\alpha_i}=
J^n_{\alpha_i,\alpha_i}=
\sum_{j=1}^m\frac{n_i}{\sij}\lt(1-\frac{n_i
	\beta_i^2 \sxj}{\sij a_j^n}\rt),~
%-\frac{\partial^2	L(\mbox{\boldmath$\theta$})}{\partial \alpha_i	\partial\alpha_l}=
J^n_{\alpha_i,\alpha_l}=
-\sum_{j=1}^m\frac{n_i n_l \beta_i \beta_l
	\sxj}{\sij \sigma^2_{lj} a_j^n},~
$

\vspace{3mm}
\noindent
$
%-\frac{\partial^2 L(\mbox{\boldmath$\theta$})}{\partial \alpha_i	\partial \beta_i}=
J^n_{\alpha_i,\beta_i}=
\sum_{j=1}^m \frac{n_i \sxj}{\sij
	a_j^n}\lt[M_j^n-\frac{\beta_i}{\sij}\lt(\frac{2n_i\beta_i\sxj
	M_j^n}{a_j^n}-D_{ij}^n\rt)\rt],~
%-\frac{\partial^2 L(\mbox{\boldmath$\theta$})}{\partial\alpha_i	\partial\beta_l}=
J^n_{\alpha_i,\beta_l}=
\sum_{j=1}^m\frac{n_i \beta_i \sxj}{\sij
	\sigma^2_{lj} a_j^n}\lt(D_{lj}^n-\frac{2 n_l \beta_l \sxj
	M_j^n}{a_j^n}\rt),~
$

\vspace{3mm}
\noindent
$
%-\frac{\partial^2 L(\mbox{\boldmath$\theta$})}{\partial \beta_i	\partial\beta_i}=
J^n_{\beta_i,\beta_i}=
\sum_{j=1}^m\frac{\sxj}{\sij
	a_j^n}\lt\{n_i-\frac{(D_{ij}^n)^2}{\sij}+\frac{n_i
	\sxj}{a_j^n}\lt[(M_j^n)^2\lt(1-\frac{4 n_i\sxj\beta_i^2}{\sij
	a_j^n}\rt)+\frac{4\beta_i M_j^n D_{ij}^n}{\sij}-\frac{2n_i
	\beta_i^2}{\sij}\rt]\rt\},~
$

\vspace{3mm}
\noindent
$
%-\frac{\partial^2 L(\mbox{\boldmath$\theta$})}{\partial \beta_i	\partial\beta_l}=
J^n_{\beta_i,\beta_l}=
-\sum_{j=1}^m \frac{\sxj}{\sij \sigma^2_{lj}
	a_j^n}\lt\{D_{ij}^n D_{lj}^n+\frac{2\sxj}{a_j^n}\lt[n_i n_l \beta_i
\beta_l\lt(1+\frac{2\sxj (M_j^n)^2}{a_j^n}\rt)-n_i\beta_i M_j^n D_{lj}^n-n_l \beta_l M_j^n
D_{ij}^n\rt]\rt\},~
$

\vspace{2mm}
\noindent
$j \neq q,$ $i \neq l$, $j,q=1,\cdots,m,~i,l=2,\cdots,p$ and
with $a_j^n$, $D_{ij}^n$ and $M_j^n$ as given in
Section 2.

\vspace{5mm}
\noindent
\medskip
{\large{\bf B.2}: Convergence of the elements of the observed information matrix, $\frac{J^n({\bm\theta})}{n}$, to the random matrix $ W({\bm\theta}). $ }

\vspace{3mm}
\medskip
We recall some notation introduced in Sections 2 and 3. Let $ \mbox{\boldmath$\psi$}^n \in \Theta$ be a vector of parameters such that
$ \mbox{\boldmath$\psi$}^n = \mbox{\boldmath$\theta$} + \frac{1}{\sqrt{n}}{\bf s}$ for some vector ${\bf s} \in \mathbb{R}^{m+2(p-1)}$ fixed and, let $ \mbox{\boldmath$\phi$}^n$ be a random vector satisfying

$$\mbox{\boldmath$\phi$}^n = (1-\delta^n) \mbox{\boldmath$\theta$} + \delta^n \mbox{\boldmath$\psi$}^n = (1-\delta^n) \mbox{\boldmath$\theta$} + \delta^n \left( \mbox{\boldmath$\theta$} + \frac{1}{\sqrt{n}}{\bf s} \right) = \mbox{\boldmath$\theta$} + \delta^n \frac{1}{\sqrt{n}}{\bf s},$$ where $0 < \delta^n < 1$ and $\delta^n$ is a random variable. Let $c=\max\{ \mid s_1 \mid , \cdots , \mid s_{m+2(p-1)}\mid \}$ be the norm of the vector ${\bf s}$. To simplify the notation, without loss of generality we assume that  $ \mbox{\boldmath$\psi$}^n = \mbox{\boldmath$\theta$} + \frac{c}{\sqrt{n}}$ and  $\mbox{\boldmath$\phi$}^n = \mbox{\boldmath$\theta$} + \delta^n \frac{c}{\sqrt{n}}$. We say that $n \rightarrow \infty$ as $n_i \rightarrow \infty$ and $\frac{n_i}{n} \rightarrow w_i$ where $w_i$ is a positive constant for every $i=1, \cdots , p$. We emphasize that the results are valid for every $\mbox{\boldmath$\theta$}^n$ such that $$ \sqrt{n} \mid \mbox{\boldmath$\theta$}^n - \mbox{\boldmath$\theta$} \mid \leq c, \quad n \geq 1.$$

For every ${\bm \theta} = (\mu_{x_1} , \cdots , \mu_{x_m} , \alpha_2, \cdots , \alpha_p , \beta_2 , \cdots , \beta_p) \in \mathbb{R}^{m+2(p-1)}$, the components of the observed information matrix depends on the following elements

\[
a_j^n ({\bm \theta} )= 1+ \sigma^2_{x_j} \left[ \frac{n_1}{\sigma^2_{1j}} + \sum_{i=2}^p \frac{n_i \beta_i^2}{\sigma^2_{ij}}  \right], ~ ~ M_j^n({\bm \theta} ) = \frac{\mu_{x_j}}{\sigma^2_{x_j}} + \sum_{k=1}^{n_1} \frac{Y_{1jk}}{\sigma^2_{1j}} +\sum_{i=2}^p \frac{\beta_i}{\sigma^2_{ij}} \left( \sum_{k=1}^{n_i} Y_{ijk} - n_i \alpha_i  \right),
\]
\[
D_{1j} ({\bm \theta} ) = \sum_{k=1}^{n_1} Y_{1jk} \quad \text{and} \quad D_{ij} ({\bm \theta} ) = \sum_{k=1}^{n_i} Y_{ijk} - n_i \alpha_i,\quad i=2, \cdots , p  \quad \text{and} \quad j=1, \cdots , m.
\]

\newpage
In this section, we will prove that

\[
\mathbb{P}_{\mbox{\boldmath$\psi$}^n , (x_1, \cdots , x_m)} \left[ \Big| \frac{1}{n} {\bf s}^T J^n({\bm \phi}^n) {\bf s} -  {\bf s}^T W(\mbox{\boldmath$\theta$}) {\bf s} \Big| \geq \epsilon \right] \rightarrow 0, \quad n \rightarrow \infty, \quad \epsilon > 0.
\] In order to prove the uniform convergence in probability, we will prove that each component of the observed information matrix converges uniformly in probability. We have that

\[\frac{{a_j^n (\mbox{\boldmath$\phi$}^n )}}{n} = \frac{1}{n}+ \sigma^2_{x_j} \left[\sum_{i=2}^{p}\frac{n_i}{n}\frac{(\beta_i + \delta^n \frac{c}{\sqrt{n}})^{2}}{\sigma_{ij}^{2}} + \frac{n_1}{n} \frac{1}{\sigma_{1j}^{2}} \right]  =
\]

 \[\frac{1}{n}+\sigma^2_{x_j} \left\{ \sum_{i=2}^{p} \frac{n_i}{n} \frac{1}{\sigma_{ij}^{2}} \left[ \beta_i^2 + 2 \beta_i  \frac{\delta^n c}{\sqrt{n}} + \left(  \frac{\delta^n c}{\sqrt{n}} \right)^2 \right] + \frac{n_1}{n} \frac{1}{\sigma_{1j}^{2}} \right\}=
\]

\[
\frac{1}{n}+ \sigma^2_{x_j}\sum_{i=2}^{p} \frac{n_i}{n} \frac{1}{\sigma_{ij}^{2}} \left[ 2 \beta_i  \frac{\delta^n c}{\sqrt{n}} + \left(  \frac{\delta^n c}{\sqrt{n}} \right)^2 \right] + \frac{n_1}{n}  \frac{\sigma^2_{x_j}}{\sigma_{1j}^{2}} + \sigma^2_{x_j}\sum_{i=2}^{p}\frac{n_i}{n}\frac{\beta_i^2}{\sigma_{1j}^{2}} .
\] As $\delta^n$ is a random variable such that $0 < \delta^n < 1$, we obtain that

\[
\Big| \frac{{a_j^n (\mbox{\boldmath$\phi$}^n )}}{n} - w_1 \frac{\sigma^2_{x_j}}{\sigma_{1j}^{2}} - \sigma^2_{x_j}\sum_{i=2}^{p} w_i \frac{\beta_i^2}{\sigma_{ij}^{2}}  \Big| \leq
\frac{1}{n} +  \sigma^2_{x_j}\sum_{i=2}^{p}\frac{n_i}{n} \frac{1}{\sigma_{ij}^{2}} \left[ \frac{c^2}{n}  +2 \frac{c \mid \beta_i \mid}{\sqrt{n}}  \right] +
\]

\[
\mid\frac{n_1}{n} - w_1 \mid \frac{\sigma^2_{x_j}}{\sigma_{1j}^{2}} + \sigma^2_{x_j}\sum_{i=2}^{p}\mid  \frac{n_i}{n} - w_i \mid \frac{\beta_i^2}{\sigma_{ij}^{2}}\] As a consequence, we obtain that

\[
\mathbb{P}_{\mbox{\boldmath$\psi$}^n, (x_1, \cdots , x_m)} \left[  \Big| \frac{{a_j^n (\mbox{\boldmath$\phi$}^n )}}{n} -\sigma^2_{x_j} \left( w_1 \frac{1}{\sigma_{1j}^{2}} + \sum_{i=2}^{p} w_i \frac{\beta_i^2}{\sigma_{ij}^{2}}\right) \Big| \geq \epsilon \right] ~~\overset{n\rightarrow\infty}\longrightarrow~~0, \quad \epsilon > 0,
\] and $j=1, \cdots , m$. In the sequel, we take the observed data ${\bf Y}^n$ with distribution $\mathbb{P}_{\mbox{\boldmath$\psi$}^n, (x_1, \cdots , x_m)}$, for every $n \geq 1$.  By definition, we have that for every $i=2, \cdots , p$,
\[
		\frac{\sum\limits_{k=1}^{n_i} Y_{ijk}}{n}
		=\frac{\sum\limits_{k=1}^{n_i}\left[ (\alpha_{i} + \frac{c}{\sqrt{n}}) + (\beta_{i} + \frac{c}{\sqrt{n}}) x_{j} + e_{ijk} \right]}{n_i} \frac{n_i}{n} = \frac{n_i}{n} \left[ \alpha_{i}+\beta_{i} x_{j} \right] + \frac{n_i}{n} \frac{\sum\limits_{k=1}^{n_i} e_{ijk} }{n_i} +  \frac{n_i}{n}  \frac{c}{\sqrt{n}} \left(1 + x_j \right) .
\] Then, we conclude that

\[
\left| \frac{\sum\limits_{k=1}^{n_i} Y_{ijk}}{n} - w_i \left( \alpha_{i}+\beta_{i} x_{j} \right) \right| \leq \mid \frac{n_i}{n} - w_i \mid \left| \alpha_{i}+\beta_{i} x_{j} \right|  + \frac{n_i}{n} \left|  \frac{\sum\limits_{k=1}^{n_i} e_{ijk} }{n_i} \right| +  \frac{n_i}{n}  \frac{c}{\sqrt{n}} \left|1 + x_j \right|, \quad i=2, \cdots , p.
\] By applying the same arguments, we obtain that

\[
\left| \frac{\sum\limits_{k=1}^{n_1} Y_{1jk}}{n} - w_1 x_{j} \right| \leq \mid \frac{n_1}{n} - w_1 \mid \left| x_{j} \right|  + \frac{n_1}{n} \left|  \frac{\sum\limits_{k=1}^{n_1} e_{1jk} }{n_1} \right| +  \frac{n_1}{n}  \frac{c}{\sqrt{n}} \left|1 + x_j \right|.
\]

As the distribution of the random error $\{e_{ij\ell} : \ell=1, 2, \cdots\}$ is independent of the parameter, the law of large number yields

\[
\mathbb{P}_{\mbox{\boldmath$\psi$}^n, (x_1, \cdots , x_m)} \left[ \left| \frac{\sum\limits_{k=1}^{n_i} e_{ijk} }{n_i} \right| \geq \epsilon \right] ~~\overset{n\rightarrow\infty}\longrightarrow~~ 0, \quad \epsilon > 0, \quad i=1, \cdots , p \quad \text{and} \quad j=1, \cdots m .
\] Hence, we conclude that

\begin{align*}
\mathbb{P}_{\mbox{\boldmath$\psi$}^n, (x_1, \cdots , x_m)} \left[ \left| \frac{\sum\limits_{k=1}^{n_i}Y_{ijk} }{n} -  w_i ~ \left( \alpha_{i}+ \beta_{i}x_{j} \right) \right| \geq \epsilon \right]		&  \overset{n\rightarrow\infty} \longrightarrow 0, \quad \epsilon > 0,
		\end{align*}  $i=2, \cdots , p ~ ~ \text{and} ~ ~ j=1, \cdots , m$, and

\begin{align*}
\mathbb{P}_{\mbox{\boldmath$\psi$}^n, (x_1, \cdots , x_m)} \left[ \left| \frac{\sum\limits_{k=1}^{n_1}Y_{1jk} }{n} -  w_1 x_{j} \right| \geq \epsilon \right]		&  \overset{n\rightarrow\infty} \longrightarrow 0, \quad \epsilon > 0,
		\end{align*}  for every  $j=1, \cdots , m$. In the sequel, for every $i=2, \cdots , p$, we have that

\[
\frac{{D_{ij}^n} (\mbox{\boldmath$\phi$}^n)}{n}=\frac{\sum\limits_{k=1}^{n_i}Y_{ijk} }{n}-\frac{n_i ~(\alpha_i + \frac{\delta^n c}{\sqrt{n}})}{n} \quad \text{and} \quad \frac{{D_{1j}^n} (\mbox{\boldmath$\phi$}^n)}{n}=\frac{\sum\limits_{k=1}^{n_1}Y_{ijk} }{n}.
\] Then, we obtain that

\[
\mathbb{P}_{\mbox{\boldmath$\psi$}^n, (x_1, \cdots , x_m)} \left[ \left| \frac{{D_{ij}^n} (\mbox{\boldmath$\phi$}^n)}{n} - w_i \beta_i x_j \right| \geq \epsilon  \right] \overset{n\rightarrow\infty} \longrightarrow 0 \quad \text{and} \quad \mathbb{P}_{\mbox{\boldmath$\psi$}^n, (x_1, \cdots , x_m)} \left[ \left| \frac{{D_{ij}^n} (\mbox{\boldmath$\phi$}^n)}{n} - w_1 x_j \right| \geq \epsilon  \right] \overset{n\rightarrow\infty} \longrightarrow 0.
\] By applying the same arguments, we have that

\[
\frac{M_j^n ( \mbox{\boldmath$\phi$}^n)}{n}  =  \frac{\mu_{x_j} + \frac{\delta^n c}{\sqrt{n}}}{n ~ \sxj} + \sum\limits_{i=2}^p \left(\beta_i + \frac{\delta^n c}{\sqrt{n}} \right) \frac{D_{ij}^n(\mbox{\boldmath$\phi$}^n)}{n \sij } +  \frac{D_{1j}^n(\mbox{\boldmath$\phi$}^n)}{n \sigma_{1j}^2 }, \quad j=1, \cdots , m.
\] Then, we conclude that

\[
\mathbb{P}_{\mbox{\boldmath$\psi$}^n, (x_1, \cdots , x_m)} \left[ \left| \frac{M_j^n ( \mbox{\boldmath$\phi$}^n)}{n}  - x_j \left(\sum\limits_{i=2}^p \frac{w_i ~ \beta_i^2 }{\sij} + \frac{w_1}{\sigma^2_{1j}} \right) \right| \geq \epsilon \right] \overset{n\rightarrow\infty} \longrightarrow 0 , \quad j=1, \cdots , m.
\] As a consequence of the continuous mapping theorem, we conclude that

\[
\mathbb{P}_{\mbox{\boldmath$\psi$}^n, (x_1, \cdots , x_m)} \left[ \left| \frac{{M_j^n (\mbox{\boldmath$\phi$}^n)}}{{a_j^n (\mbox{\boldmath$\phi$}^n )}} - \frac{x_j \left(\sum\limits_{i=2}^p \frac{w_i ~ \beta_i^2 }{\sij} + \frac{w_1}{\sigma^2_{1j}} \right) }{\sigma^2_{x_j} \left( w_1 \frac{1}{\sigma_{1j}^{2}} + \sum_{i=2}^{p} w_i \frac{\beta_i^2}{\sigma_{ij}^{2}}\right)}  \right| \geq \epsilon   \right] \overset{n\rightarrow\infty} \longrightarrow 0 , \quad j=1, \cdots , m.
\]

\medskip
In the sequel, we will show the uniform convergence for each component of observed information matrix. We have that

\medskip
\begin{enumerate}[1)]
	\item %1
	{\large$ \frac{J_{(\mu_{x_j},\mu_{x_j})}(\mbox{\boldmath$\phi$}^n)}{n}=-\frac{(1-{a_j^n (\mbox{\boldmath$\phi$}^n)})}{n\sxj {a_j^n (\mbox{\boldmath$\phi$}^n)}} =-\frac{1}{\sxj }(\frac{1}{n}-\frac{{a_j^n (\mbox{\boldmath$\phi$}^n)}}{n}) \frac{1}{{a_j^n (\mbox{\boldmath$\phi$}^n)}}.$} Then, we obtain that \\
\[
\mathbb{P}_{\mbox{\boldmath$\psi$}^n, (x_1, \cdots , x_m)} \left[ \left| \frac{J_{(\mu_{x_j},\mu_{x_j})}(\mbox{\boldmath$\phi$}^n)}{n} \right|  \geq \epsilon \right] \overset{n\rightarrow\infty} \longrightarrow 0 , \quad \epsilon > 0 , \quad j=1, \cdots , m.
\] In this case, we have that $W_{(\mu_{x_j},\mu_{x_j})}(\mbox{\boldmath$\theta$}) = 0$, for every $j=1, \cdots , m$.\\ \\
\item %2
{\large$ \frac{J_{(\mu_{x_j},\mu_{x_h})}(\mbox{\boldmath$\phi$}^n)}{n}=\frac{0}{n}~~\overset{n\rightarrow\infty}\longrightarrow~~0=  W_{(\mu_{x_j},\mu_{x_h})} (\mbox{\boldmath$\theta$})$,} for every $j \neq h=1, \cdots , m$.\\ \\
	\item %3
{\large$ \frac{J_{(\mu_{x_j},  \alpha_i)  } (\mbox{\boldmath$\phi$}^n) }{n}=\frac{1}{n}\frac{n_i \left(\beta_i + \delta^n \frac{c}{\sqrt{n}}\right)}{\sij {a_j^n} (\mbox{\boldmath$\phi$}^n)} =\frac{n_i}{n} \frac{ \left(\beta_i + \delta^n \frac{c}{\sqrt{n}}\right)}{\sij }\frac{1}{{a_j^n (\mbox{\boldmath$\phi$}^n)}} ,$} for every $i=2, \cdots , p$ and $j=1, \cdots , m$. \\ \\
% Moreover, we have that \\
%
%{\large$
%\frac{J_{(\mu_{x_j},  \alpha_1)  } (\mbox{\boldmath$\phi$}^n) }{n} = \frac{n_1}{n} \frac{ 1}{\sigma^2_{1j} }\frac{1}{{a_j^n (\mbox{\boldmath$\phi$}^n)}}, \quad j=1, \cdots , m.
%$}\\ \\

\noindent
As $a_j^n (\mbox{\boldmath$\phi$}^n) \rightarrow \infty$, we obtain that\\
\[
\mathbb{P}_{\mbox{\boldmath$\psi$}^n, (x_1, \cdots , x_m)} \left[ \left|\frac{J_{(\mu_{x_j},  \alpha_i)  } (\mbox{\boldmath$\phi$}^n) }{n} \right| \geq \epsilon \right]~~ \overset{n\rightarrow\infty}\longrightarrow ~~ 0, \quad \epsilon > 0, ~  i=2, \cdots , p, ~ j=1, \cdots , m.
\]

So, $W_{(\mu_{x_j},\alpha_i)}(\mbox{\boldmath$\theta$}) = 0$, for every  $i=2, \cdots , p$ and $j=1, \cdots , m$. \\ \\
\end{enumerate}

Using the arguments developed earlier, we obtain that
\medskip
\medskip

%\[W_{(\mu_{x_j}\mu_{x_j})}   	= W_{(\mu_{x_j}\mu_{x_h})}  	=  W_{(\mu_{x_j}  \alpha_i  )}=~j,h=1,\cdots,m;~j \neq h ,~i=2,\cdots,p;~~  \]

{\large $ W_{(\mu_{x_j},  \B_i)  }(\mbox{\boldmath$\theta$})=0; \quad W_{(\alpha_i\alpha_i) }(\mbox{\boldmath$\theta$})=
\sum\limits_{j=1}^{m} \frac{1}{\sdos_{ij}} -\B _i^2 \sum\limits_{j=1}^{m}\frac{1}{(\sdos_{ij})^2 \left( \sum\limits_{q=1}^{p} \frac{ \B _q^2 }{\sdos_{qj}}   \right) }; $}\\

{\large $  W_{(\alpha_i\alpha_l) }(\mbox{\boldmath$\theta$})=
-\B _i\B _l \sum\limits_{j=1}^{m}\frac{1}{\sdos_{ij}\sdos_{lj} \left( \sum\limits_{q=1}^{p} \frac{ \B _q^2 }{\sdos_{qj}}   \right) };\quad
W_{(\alpha_i\B_i) }=
\sum\limits_{j=1}^{m}\frac{x_j}{\sdos_{ij}}		-\B _i^2 \sum\limits_{j=1}^{m}\frac{x_j}{(\sdos_{ij})^2 \left( \sum\limits_{q=1}^{p} \frac{ \B _q^2 }{\sdos_{qj}}   \right) } ; $}\\

{\large $W_{(\alpha_i\B_l )}=
-\B _i\B _l \sum\limits_{j=1}^{m}\frac{x_j}{\sdos_{ij}\sdos_{lj} \left( \sum\limits_{q=1}^{p} \frac{ \B _q^2 }{\sdos_{qj}}   \right) };
\quad W_{(\B_i\B_i) }=
\sum\limits_{j=1}^{m} \frac{x_j^2}{\sdos_{ij}} \left\{1-\frac{\B _i^2}{\sdos_{ij}
	\left( \sum\limits_{q=1}^{p} \frac{ \B _q^2 }{\sdos_{qj}}   \right) } \right\}; $}\\

{\large $W_{(\B_i\B_l) }=
-\B _i\B _l \sum\limits_{j=1}^{m}\frac{x_j^2}{\sdos_{ij}\sdos_{lj} \left( \sum\limits_{q=1}^{p} \frac{ \B _q^2 }{\sdos_{qj}}   \right) }  ;~j=1,\cdots,m;~i,l=2,\cdots,p;~ i\neq l . $}\\

%\bibliographystyle{chicago} %sem o problema de url %te corrige o and por e ademas colocar en negrita los autores%com url ou seu direcao eletronica
%\begin{quote}
	\bibliographystyle{apalike}
	\bibliography{references24}
	\end{document}